\theoremstyle{plain}
\newtheorem{theorem}{Theorem}[section]
\newtheorem{definition}{Definition}[section]
\newtheorem{corollary}[theorem]{Corollary}
\newtheorem{lemma}[theorem]{Lemma}
\newtheorem{assumption}[theorem]{Assumption}
\newtheorem{example}[theorem]{Example}
\newtheorem{remark}[theorem]{Remark}
\newcommand{\lb}{\left\{}
\newcommand{\rb}{\right\}}
\newcommand{\Def}{\overset{\text{def}}{=}}
\newcommand{\R}{\mathbb{R}}
\newcommand{\N}{\mathbb{N}}
\newcommand{\KK}{K}
\newcommand{\eps}{\varepsilon}
\newcommand{\Borel}{\mathscr{B}}
\newcommand{\Pspace}{\mathscr{P}}
\newcommand{\BP}{\mathbb{P}}
\newcommand{\BE}{\mathbb{E}}
\newcommand{\filt}{\mathscr{F}}
\newcommand{\la}{\left \langle}
\newcommand{\ra}{\right\rangle}
\newcommand{\ee}{\mathfrak{e}}
\newcommand{\genA}{\mathcal{A}}
\newcommand{\genL}{\mathcal{L}}
\newcommand{\pp}{\mathsf{p}}
\newcommand{\PP}{\mathcal{P}}
\newcommand{\pt}{\star}
\newcommand{\SSS}{\mathcal{S}}
\newcommand{\Types}{\mathcal{P}}
\newcommand{\NN}{{N,n}}
\newcommand{\mart}{\mathcal{M}}
\newcommand{\QQ}{\mathcal{Q}}
\newcommand{\subprob}{\mathcal{M}_1}
\newcommand{\TInt}{\mathcal{T}}
\newcommand{\PT}{\mathsf{X}}
\newcommand{\ind}{\text{ind}}
\newcommand{\KCal}{\mathcal{K}}
\newcommand{\bnu}{\boldsymbol{\nu}}
\newcommand{\bmu}{\boldsymbol{\mu}}
\begin{document}
\title{Default Clustering in Large Pools: Large Deviations}

\author{Konstantinos Spiliopoulos}
\address{Department of Mathematics \& Statistics\\
Boston University\\
Boston, MA 02215}
\email{kspiliop@math.bu.edu}

\author{Richard B. Sowers}
\address{Department of Mathematics\\
    University of Illinois at Urbana--Champaign\\
    Urbana, IL 61801}
\email{r-sowers@illinois.edu}

\date{\today.}

\begin{abstract}
We study large deviations and rare default clustering events in a dynamic large heterogeneous portfolio of interconnected components.
Defaults come as Poisson events and the default intensities of the different components in the system interact through the empirical default rate and via systematic effects that are common to all components.   We establish the large deviations principle for the empirical  default rate for such an interacting particle system. The rate function is derived in an explicit form that is amenable to numerical computations and derivation of the most likely path to failure for the system itself. Numerical studies illustrate the theoretical findings. An understanding of the role of the preferred paths to large default rates and the most likely ways in which contagion and systematic risk combine to lead to large default rates would give useful insights into how to optimally safeguard against such events.
%
%
\end{abstract}

\keywords{Large Deviations, Heterogeneous Networks, Large Pools, Rare Events, Default Clustering}

\subjclass{60F10$\cdot$60G55$\cdot$91G40$\cdot$91G80}

\thanks{The authors would like to thank Kay Giesecke for  helpful comments.}




\maketitle

\section{Introduction}
The financial crisis of 2007-2008 challenged the mathematical finance
community to understand \emph{connectedness} in financial systems.
Appropriate models need to be developed to understand how risk can propagate
between financial objects which might have heretofore been modeled as closed systems.

It is possible that initial shocks, such as changes in interest rate values, changes of commodities prices or reduction in global economic growth, could trigger contagion effects, e.g., \cite{Meinerding2012}. It is likely then that some transmission mechanism, such as financial linkages or simply investor irrationality, could cause components of the system to be affected by the initial shock. Reduce-form point process models of correlated default, that are usually based on counting processes, are often used to access portfolio credit risk in portfolios of defaultable assets such as loans and corporate defaults. In these models defaults arrive at intensities that are governed by a given system of stochastic differential equations.  Due to the size of the portfolios computing the distribution of the loss from default in these models is often challenging. Main US banks for example may easily have $20,000$ wholesale loans and $50,000-100,000$ mid-market and commercial loans. Mortgage pools with size of $10,000$ are often common. Simulation and analysis of such pools is non-trivial and often quite burdensome.

In this work we focus on using dynamic portfolio credit risk models to study rare events in large portfolios and default clustering. We statistically model failure via the classical framework of point processes. However, we include several distinct and meaningful sources of randomness
 that can lead to failure of the system itself. The intensities form an interacting particle system, where the interaction happens through feedback terms
and exposure to common systematic risks.  The foundation of the empirically motivated model that we study lies in \cite{GSS2013}.
In this work various properties of such models were investigated, finding that contagion and systematic risk give meaningful insights into clustering of defaults.

Typical (law of large numbers) and central limit type behavior for such models as the number of components in the system grows have been studied in  \cite{GSS2013, GSSS2013, SSG2013}. Our interest here is tail behavior and rare events.  One of the principle theoretical
attractions of structured finance was that, supposedly, large pools
of assets were regular enough that one could accurately price tail events.
In hindsight, however, interaction in large pools proved an Achilles' heel. Interconnections often make a system robust, but they can also act as
conduits for failure. In this paper, we seek to understand the mathematics of rare systemic collapse in large interacting systems.

We consider a large system with interacting components that is influenced by an exogenous source of randomness.  There is a central source of interconnection (a central ``bus'').  Failure of any component stresses the central bus, which in turn can cause other components to fail (a feedback effect).  In  particular, we want to understand  how the system can catastrophically fail, and how the feedback mechanism and the exogenous factor interact to produce large failure clusters.
Along with an understanding of the likelihood of such a failure, we want to understand
the structure of pathways to failure.  Given a statistical description of the
interacting system, an understanding of the ``most likely'' pathways to failure would naturally lead
to an understanding of how to control the system to minimize failure and how to sense the onset of failure.

Of particular interest is \emph{dynamic} interactions.   The extra
dimension of time allows one to compare different possible ways
in which a system can end up at a given point.  This may help in early-stage
identification of certain phenomena.  Our goal here is to develop some of these
insights in the model of \cite{GSS2013}.  In particular, we would like to understand
\emph{pathways to systemic collapse}.  In the presence of contagion, what
are the most likely ways that a pool can suffer large losses?  As in other engineered
systems, characterization of most likely paths to failure can better allow
effective intervention.

The heart of our analysis is the theory of \emph{large deviations}, e.g., see the classical manuscripts \cite{DemboZeitouni, FWBook}.  The theory
of large deviations gives a rich framework in which to identify and then prove
rates of decay of exponential tails.  Two simple examples are well-known;
Sanov's theorem, which in its simplest captures the tail behavior of a large
collection of i.i.d coin flips, and the Freidlin-Wentzell theorem, which identifies the
most likely way that diffusive perturbations can drive a stable differential
equation out of equilibrium.  Both of these core examples play a role
in our analysis, even though the situation is more complex here because the coin flips are neither identically distributed, nor independent.  Defaults can be modeled as coin flips, and the effects
of contagion can be thought of as random perturbations of a dynamical system.
Our main theorems, Theorems \ref{T:LDPHeterogeneous1} and  \ref{T:LDPHeterogeneous2}, give the large deviations principle. In the full case, where both contagion and systematic effects are present, the rate function is
 a combination of a relative entropy (a  ``nonlinear''  Sanov's type theorem) and the Freidlin-Wentzell action functional.

Tail behavior in static pools is investigated  in \cite{DDD04,Glasserman} and in \cite{SS11} the authors study the effect of stochastic recovery on the tail of the loss distribution when the recovery rate depends on the default rate. In \cite{GL05,GKS07} rare event asymptotics for the loss distribution in the Gaussian copula model of portfolio credit risk and related  importance sampling questions are studied.  In a large deviations analysis of a mean field model in \cite{daipra-etal} the authors take the default intensity of a component in the pool to be a deterministic function
 of the percentage pool loss due to defaults, see also \cite{daipra-tolotti}. In \cite{ZhangBlanchetGiseckeGlynn2013}, the authors establish a large time large deviations principle
for an interacting system of affine point processes.  In \cite{PapanicolaouSystemicRisk}, the authors study systematic risk [from endogenous aspect] via a mean field model of interacting agents. Using a model of a two-well potential, agents can move freely from a healthy state to a failed state.  The authors study probabilities of transition from the healthy to the failed state  using large deviations ideas.

In this paper, we consider tail behavior for dynamic heterogeneous pools with stochastic intensity and we are interested in the behavior for large pools of interconnected components. Our contribution is two-fold. Firstly,  we develop a large deviations principle for
a dynamic point process model of correlated default timing that takes into account both effects of contagion and of systematic, exogenous, risks. We study the case that the number of constituent
firms or components in the network grows. Secondly, we numerically explore the large deviations results which can help understand how default clusters occur in such systems.

In the theoretical side,  we see that the rate function governing the tail events is given explicitly as an additive functional of a
Freidlin-Wentzell action functional and a nonlinear relative entropy, where the nonlinearity originates from the fact that defaults are not independent. The dependence structure and the heterogeneity of the environment,
i.e., the fact that defaults are not identically distributed, complicate the mathematical analysis. Nevertheless, we rigorously obtain a fairly explicit form of the large deviations principle which is amenable to numerical investigations. In particular, we derive a representation for the rate
 function which is suitable for numerically computing both the rate function and the most likely path to failure, i.e., the extremals, in both homogeneous and heterogeneous environments.

In the numerical side, the numerical experiments illustrate the effect of systematic (exogenous) risks and contagion on tail events of such systems. For instance, as we shall see in the numerical investigations of Section  \ref{S:Numerics}, if a large default cluster occurs, the systematic risk is most likely to play a large role in the initial phase, but then its importance decreases (and thus the contagion effects become more important). Moreover, the analysis of the extremals related to the large deviations rate function (i.e., the most likely paths to failure) in heterogeneous pools with two or more types can help understand which types in the pool are more vulnerable to default due to the impact of contagion, and as a consequence the pathway to creation of default clusters.

We end this introduction by mentioning that, even though the interacting system that we study is primarily motivated by issues of systemic risk in large financial networks,
its formulation is sufficiently generic to make the analysis and results of broader interest. The physical phenomenon of the failure of an individual component in a network, which could have been caused by an external force,
 increasing the stress to other components of the network, making the system more likely to fail, is of broader interest and applicability.

The paper is organized as follows. In Section \ref{LLN}, we describe the model, the assumption and recall the law of large numbers result proven in \cite{GSS2013}. In Section \ref{S:MainResults} we describe our main results, the large deviations principle for the empirical measure and empirical default rate. Then, in Section \ref{S:Numerics} we perform numerical experiments supporting the theoretical findings. Section \ref{S:LDPheterogeneous} contains the proofs of the large deviations results of Section \ref{S:MainResults}.
We conclude with our conclusions in Section \ref{S:Conclusion}.

\section{Model and law of large numbers}\label{LLN}

We assume that our overall system contains $N$ components or subsystems (where $N$ is large). We assume that $(\Omega,\filt,\BP)$ is an underlying probability triple on which all random variables are defined.

To start, let $\tau^{N}_{n}$ be the stopping time at which the $n$-th component (or particle) in our system fails. A failure time $\tau^{N}_{n}$ has intensity process $\lambda^\NN$, which satisfies
\begin{equation*}\label{E:stochhaz}
 \BP\{\tau^{N}_{n}\in (t,t+\delta]|\filt_t,\, \tau^{N}_{n}>t\} \approx \lambda^\NN_t \delta
\end{equation*}
as $\delta\searrow 0$, where $\filt_t$ is the sigma-algebra generated by the entire system
up to time $t$. That is, the process defined by $\chi_{\{\tau^{N}_{n}\le t\}}-\int^{t}_0\lambda^\NN_s 1_{\{\tau^{N}_{n}>s\}}ds$ is a martingale with respect to $\filt_t$.

We want our model to capture several important phenomena.  Each component will be affected by three sources of randomness, one of which is
unique to the component itself, one that is responsible for contagious effects, and one of which reflects the external environment. We assume that each component has been engineered to be stable.  We also assume, however, the system is subject to cascading (or ``contagious'') behavior; failure of any component is likely to lead to failure of more components. In particular, fixing $N\in\N$ and $n\in\{1,\cdots,N\}$ we consider the model
\begin{equation} \label{E:main}
\begin{aligned}
d\lambda^\NN_t &= -\alpha_\NN (\lambda^\NN_t-\bar \lambda_\NN)dt + \sigma_\NN \sqrt{\lambda^\NN_t}dW^n_t +  \beta^C_\NN dL^N_t+ \eps_{N}\beta^S_\NN \lambda^\NN_t dX_t \qquad t>0\\
\lambda^\NN_0 &= \lambda_{\circ, N,n}\\
dX_t &= b( X_t) dt + \kappa(X_t)dV_t \qquad t>0\\
X_0&= x_\circ \\
L^N_t &= \frac{1}{N}\sum_{n=1}^N \chi_{\{\tau^{N}_{n}\le t\}}, \end{aligned}
\end{equation}
where
\begin{equation}\label{E:tau}
\tau^{N}_{n} \Def \inf\lb t\ge 0: \int_{0}^t \lambda^\NN_s ds\ge \ee_n\rb.
\end{equation}
Here the $W^n$'s are an independent collection of standard Brownian motions,
$V$ is also a standard Brownian motion, and the $\ee_n$'s are standard
exponential random variables; the $W^n$'s, $V$ and the $\ee_n$'s are all independent
of each other.

In the case $\beta^{C}_{\NN}=\beta^{S}_{\NN}=0$ for all $n\in\{1,\cdots,N\}$, one recovers the classical CIR model in credit risk, e.g., \cite{DPS}. The term $\beta^C_\NN dL^N_t$ reflects
the contagious effects and the $X_{t}$ term is the external source of randomness (systematic risk).
Notice that movements in $X$ cause correlated changes in each intensity $\lambda^\NN$, which then provides a channel for default clustering.
Each system's component sensitivity to $X$ is measured by the parameter $\beta^S_\NN\in\R$.  Then,  a default
causes a jump of size $\beta^C_\NN/N$ in the intensity $\lambda^\NN$, where $\beta^C_\NN\in \R_+= [0,\infty)$.
The mean-reversion of $\lambda^\NN$ implies that the impact of a default fades away with time, exponentially with rate $\alpha_\NN\in\R_+$. The linear dependence of $\lambda$ in the $dX$ component guarantees that $\lambda^{\NN}_{t}\geq 0$ for every $N\in\N$, $n\in\{1,\cdots,N\}$ and $t\geq 0$ (Proposition 3.3 in \cite{GSS2013}), and makes the analytic computations easier.
As it is demonstrated in \cite{azizpour-giesecke-schwenkler}, an important  channel for clustering of defaults are self-exciting effects of such types.

Proposition 3.3 in \cite{GSS2013} guarantees that under the assumption of an existence of a unique strong solution for the SDE for $X$
process, the system  \eqref{E:main} has a unique strong solution such that $\lambda^{\NN}_{t}\geq 0$ for every $N\in\N$, $n\in\{1,\cdots,N\}$ and $t\geq 0$.
The structure of the feedback term, i.e., the empirical average $L^{N}_{t}$, is of mean field type, which brings the system (\ref{E:main}) roughly within the class of McKean-Vlasov
models, e.g., \cite{Gartner88}. However, as it is also demonstrated in \cite{GSS2013, GSSS2013}, the structure of (\ref{E:main}) presents several difficulties
that bring the analysis of such systems outside the scope of the standard setup.
In particular, a key structural difference is that the empirical measure is not on the level of the intensities $\lambda^\NN_t$,
but on the level of the default times $\tau^{N}_{n}$, i.e. there is an extra level of randomness. Moreover,
the interacting particle system is heterogeneous and not homogeneous, there is the presence of the additional term $X$, the coefficients of the
intensity dynamics are not bounded and there is a square root degeneracy.

Let's define a `type' space which  describes the $\lambda^\NN$'s.  Define $\Types\Def \R_+^4\times \R^2$ and we denote by $\pp^\NN=(\alpha_\NN,\bar \lambda_\NN,\sigma_\NN,\beta^C_\NN,\beta^S_\NN,\lambda_{\circ,N,n})\in \Types$.  Then $\pp^\NN$
gives the dynamics of $\lambda^\NN$, the response to the loss process $L^N$, and the initial condition.  Let's also define $\PP^\NN_t\Def (\alpha_\NN,\bar \lambda_\NN,\sigma_\NN,\beta^C_\NN,\beta^S_\NN,\lambda^\NN_t)$; this is a $\Types$-valued process
which keeps track of the dynamics of $\lambda^\NN$ and its current value.  Of course, we have that $\PP^\NN_0=\pp^\NN_0$.

For any Polish space $S$, let $\subprob(S)$ be
the collection of subprobability Borel measures on $S$;
i.e., $S$ consists of Borel measures $\nu$ on $S$ such that
$\nu(S)\le 1$.  Then $\subprob(S)$ is itself a Polish space;
\cite{MR88a:60130}.  Define here $E\Def \subprob( \Types)$, and let
\begin{equation*}
\bmu^N_t \Def \frac{1}{N}\sum_{n=1}^N\delta_{ \PP^\NN_t}\chi_{\{\tau^{N}_{n}> t\}};
\end{equation*}
this is the empirical distribution of $\PP^\NN_t$ for those components which are still ``alive''.
We note that $\bmu^N_t$ is a random trajectory in $E$.  Also,
\begin{align*}\label{ln}
L^N_t =1-\bmu^N_t( \Types),
\end{align*}

We shall denote by $\Pspace(\Types)$ the set of probability measures on $\Types$. In \cite{GSS2013, GSSS2013,SSG2013}, the authors have developed law of large numbers and central limit theorem approximations to $\bmu^N_t$ and as a consequence to $L^{N}_{t}$ as well. The goal of this paper is to establish a large deviations principle for the empirical loss $L^N_t$, namely to study its tail behavior. In this paper we mainly concentrate on the case $\lim_{N\rightarrow\infty}\eps_{N}=0$. To focus our investigation, let's first recall the law of large numbers results obtained in \cite{GSS2013}; we can only understand 'rare' events relative to the
'typical' event.

\begin{assumption}\label{A:Bounded} We assume that there is a $\KK_{\ref{A:Bounded}}>0$ such that the $\alpha_\NN$'s, $\lambda_\NN$'s, $\sigma_\NN$'s,
$|\beta^C_\NN|$'s, $|\beta^S_\NN|$'s, and $\lambda_{\circ,N,n}$'s are all bounded by  $\KK_{\ref{A:Bounded}}$ for all $N\in \N$ and $n\in \{1,2,\dots, N\}$. \end{assumption}
\noindent Thus the types are bounded.  In fact we want them to have a macroscopic
distribution.
\begin{assumption}\label{A:regularity}  For $N\in \N$, define
\begin{equation*} U_N\Def \frac{1}{N}\sum_{n=1}^N \delta_{  \pp^\NN}.\end{equation*}
We assume that $U\Def \lim_{N\to \infty}U_N$ exists in $\Pspace( \Types)$, in the sense of weak convergence of probability measures on $\Types$.
 \end{assumption}

Under these assumptions, $\mu\Def \lim_{N\to \infty}\bmu^N$ is a well-defined
measure-valued process.  We can identify $\mu$ via the martingale problem.
For $ \pp=(\hat{\pp},\lambda)$ where $\hat{\pp}=(\alpha,\bar \lambda,\sigma,\beta^C,\beta^S)\in \Types$ and $f\in C^\infty( \Types)$, define the operators
\begin{equation*}\label{E:Operators1}
\begin{aligned} (\genL_1 f)( \pp) &= \frac12 \sigma^{2}\lambda\frac{\partial^2 f}{\partial \lambda^2}( \pp) - \alpha(\lambda-\bar \lambda)\frac{\partial f}{\partial \lambda}( \pp)-\lambda f( \pp)\\
(\genL_2 f)( \pp) &= \beta^C \frac{\partial f}{\partial \lambda}( \pp). \end{aligned}
\end{equation*}

Define also
\begin{equation*} \QQ( \pp) \Def \lambda \end{equation*}
for $ \pp=(\hat{\pp},\lambda)$ where $\hat{\pp}=(\alpha,\bar \lambda,\sigma,\beta^C,\beta^S)\in \Types$.
The generator $\genL_1$ corresponds to the diffusive part of the intensity with killing rate $\lambda$, and $\genL_2$ is the macroscopic effect of contagion on the
surviving intensities at any given time.
For every $f\in C^\infty( \Types)$ and $\mu\in E$, define
\begin{equation*} \la f,\mu\ra_E \Def \int_{ \pp \in  \Types}f( \pp)\mu(d \pp). \end{equation*}

Let $\SSS$ be the collection of elements $\Phi$ in $B(\Pspace( \Types))$ of the form
\begin{equation*}\label{E:form} \Phi(\mu) = \varphi\left(\la f_1,\mu\ra_E,\la f_2,\mu\ra_E\dots \la f_M,\mu\ra_E\right) \end{equation*}
for some $M\in \N$, some $\varphi\in C^\infty(\R^M)$ and some $\{f_m\}_{m=1}^M$
For $\Phi\in \SSS$ of the form \eqref{E:form}, define
\begin{equation*} \label{E:limgen} \begin{aligned} (\genA\Phi)(\mu) \Def \sum_{m=1}^M \frac{\partial \varphi}{\partial x_m}\left(\la f_1,\mu\ra_E,\la f_2,\mu\ra_E\dots \la f_M,\mu\ra_E\right) \lb \la \genL_1f_m,\mu\ra_E + \la \QQ,\mu\ra_E \la \genL_2 f_m,\mu\ra_E \rb. \end{aligned}\end{equation*}

We claim that $\genA$ will be the generator of the limiting martingale problem (see \cite{GSS2013}).
\begin{lemma}\footnote{At this point we would like to remark that there is a typo in the formulation of the particular result in \cite{GSS2013}. In particular, it is mentioned there that $(\genL_2 f)( \pp) =  \frac{\partial f}{\partial \lambda}( \pp)$ and $\QQ( \pp) =  \beta^C\lambda$, where it should have been $(\genL_2 f)( \pp) = \beta^C \frac{\partial f}{\partial \lambda}( \pp)$ and $\QQ( \pp) = \lambda$.}[Weak Convergence]\label{L:wconv}   Let $\lim_{N\rightarrow\infty}\eps_{N}=0$. The sequence $\{\bmu^N\}_N$
is tight in $D_{E}([0,T])$.  Moreover,  for any $\Phi\in \SSS$ and $0\le r_1\le r_2\dots r_J=s<t<T$ and $\{\psi_j\}_{j=1}^J\subset B(E)$, we have that
\begin{align*} \lim_{N\to \infty}\BE\left[\lb \Phi(\bmu^N_t)-\Phi(\bmu^N_s)-\int_{r=s}^t (\genA\Phi)(\bmu^N_r)dr\rb \prod_{j=1}^J \psi_j(\bmu^N_{r_j})\right]&=0 \\
\lim_{N\to \infty}\BE\left[\Phi(\bmu^N_0)\right] &= \Phi(U). \end{align*}
The limit $\bmu\Def \lim_N\bmu^N$ uniquely exists and is deterministic, this limit
being in probability in $C([0,T];E)$. Moreover, letting $L_{t}=1-\bmu_{t}({\Types})$, we get that for every $\delta>0$
\[
\lim_{N\rightarrow\infty}\BP\left\{\sup_{0\leq t\leq T}|L^{N}_{t}-L_{t}|\geq \delta  \right\} =0.
\]
\end{lemma}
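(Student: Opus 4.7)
The plan is to follow the classical martingale-problem route to mean-field limits: establish tightness of $\{\bmu^N\}$ in $D_{E}([0,T])$, identify any weak limit point via the limiting martingale relation, and finally conclude by a uniqueness argument for the limit equation, from which the uniform convergence of $L^N$ follows by continuous mapping on $E$.

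For tightness, Assumption \ref{A:Bounded} confines the type coordinates $(\alpha_\NN,\bar\lambda_\NN,\sigma_\NN,\beta^C_\NN,\beta^S_\NN)$ to a fixed compact set. A standard Gronwall argument on the CIR-type SDE in \eqref{E:main}, using that $\lambda^\NN_t$ is bounded below by a uniformly controlled killed CIR diffusion and above by a sub-exponential upper envelope (the contagion term $\beta^C_\NN dL^N_t$ is nondecreasing of total variation at most $\KK_{\ref{A:Bounded}}$, and $\eps_N\to 0$), gives uniform $L^p$ moments on $\sup_{n,t}\lambda^\NN_t$. This yields compact containment of $\bmu^N_t$ in $E$. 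For Aldous-style modulus-of-continuity control, I would test against a sufficiently rich countable family of $f\in C^\infty_c(\Types)$ and bound the oscillations of $\la f,\bmu^N_t\ra_E$ by writing an Itô decomposition in $t$: the diffusive part is handled by Burkholder-Davis-Gundy and the uniform $\lambda^\NN$ moments, while the pure-jump part is controlled by the total variation of $L^N$, namely by $1$.

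For the identification of the limit, I would apply Itô's formula to $\Phi(\bmu^N_t)=\varphi(\la f_1,\bmu^N_t\ra_E,\dots,\la f_M,\bmu^N_t\ra_E)$. Each coordinate $\la f_m,\bmu^N_t\ra_E$ changes from three mechanisms: the diffusion of each surviving intensity, which produces the local part of $\genL_1 f_m$ together with a square-root martingale; the killing of particle $n$ at its default time $\tau^{N}_{n}$, whose compensator $\lambda^\NN_t\,dt$ yields precisely the $-\lambda f_m$ term in $\genL_1 f_m$; and the contagion jump $\beta^C_\NN/N$ imposed on every surviving intensity whenever any particle defaults. The contagion term is the delicate piece: a first-order Taylor expansion in the jump size $\beta^C_\NN/N$ gives $\la \genL_2 f_m,\bmu^N_{t-}\ra_E/N$ per default event, and the aggregate default intensity is exactly $N\la \QQ,\bmu^N_t\ra_E$, so the compensated product recovers the bilinear $\la \QQ,\bmu^N\ra_E\,\la \genL_2 f_m,\bmu^N\ra_E$ term in $\genA\Phi$. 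The $\eps_N\beta^S_\NN\lambda^\NN_t\,dX_t$ contribution produces drift and martingale pieces of size $O(\eps_N)$ that disappear as $N\to\infty$. Taking a convergent subsequence and passing to the limit inside $\BE[\cdot\,\prod_j\psi_j(\bmu^N_{r_j})]$ gives the asserted martingale identity; the initial condition is handled by Assumption \ref{A:regularity} together with continuity of $\Phi$ on $E$.

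The step I expect to be the main obstacle is the uniqueness of the limit martingale problem, which is what promotes convergence along subsequences into convergence in probability and forces $\bmu$ to be deterministic. The limit satisfies $\frac{d}{dt}\la f,\bmu_t\ra_E=\la \genL_1 f,\bmu_t\ra_E+\la \QQ,\bmu_t\ra_E\,\la \genL_2 f,\bmu_t\ra_E$ for all $f\in C^\infty(\Types)$, a nonlinear measure-valued evolution equation on the noncompact variable $\lambda$ with degenerate (square-root) diffusive generator $\genL_1$. To prove uniqueness I would couple two solutions with the same initial datum $U$, test against a rich algebra of polynomially-bounded smooth functions, and estimate the difference of their actions in a suitably weighted dual norm; the nonlinearity $\la \QQ,\mu\ra_E\la \genL_2 f,\mu\ra_E$ is bilinear and, thanks to Assumption \ref{A:Bounded} and the uniform moment bounds on $\lambda$, Lipschitz in this norm, so a Gronwall argument closes the estimate. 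Once uniqueness is in hand, every subsequential limit agrees with $\bmu$, giving $\bmu^N\to\bmu$ in probability in $C([0,T];E)$, and since $\mu\mapsto\mu(\Types)$ is continuous on $E$, the uniform bound $\sup_{0\le t\le T}|L^N_t-L_t|\to 0$ in probability is immediate from $L^N_t=1-\bmu^N_t(\Types)$ and $L_t=1-\bmu_t(\Types)$.
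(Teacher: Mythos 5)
Your sketch is the standard martingale-problem program (compact containment and Aldous tightness, identification of limit points through the generator $\genA$, uniqueness of the limiting nonlinear evolution, then convergence in probability and projection onto $L^N_t=1-\bmu^N_t(\Types)$). The present paper, however, does not reprove Lemma~\ref{L:wconv}: it imports the statement wholesale from \cite{GSS2013} (with the footnoted correction of a typo in $\genL_2$ and $\QQ$), and the text ``We claim that $\genA$ will be the generator of the limiting martingale problem (see \cite{GSS2013})'' is the extent of the argument here. So your proposal is aligned with the route of the cited reference rather than with anything carried out in this paper.

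Two remarks worth making about what the present paper does add. First, Corollary~\ref{C:LDPgivingLLNHeter} offers a partial, independent handle on the LLN conclusion when $\eps_N=0$: the LDP rate function $\bar H$ of Theorem~\ref{T:LDPHeterogeneous1} is shown to vanish at a unique measure $\nu^*$, characterized by the closed-form fixed-point equation \eqref{Eq:LLNequation}; the proof there nevertheless still invokes Lemma 4.1 of \cite{GSS2013} to match $\bar\nu^*([0,t])$ with $L_t$, and the LDP is only for the endpoint $L^N_T$, so this does not fully replace the path-level uniform convergence asserted in Lemma~\ref{L:wconv}. Second, a technical caveat in your tightness paragraph: the suggestion that $\lambda^\NN$ is ``bounded below by a uniformly controlled killed CIR diffusion'' only holds if one discards the $\eps_N\beta^S_\NN\lambda^\NN_t\,dX_t$ term, which has indefinite sign. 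Nonnegativity and moment control for the full $\lambda^\NN$ come instead from the multiplicative (linear-in-$\lambda$) structure of that term together with the CIR drift and Assumption~\ref{A:Bounded}; this is precisely Proposition~3.3 of \cite{GSS2013} and is not a simple pathwise comparison. The hardest step, uniqueness of the limiting measure-valued evolution under the degenerate square-root diffusion, you flag correctly; in \cite{GSS2013} this is settled by a fixed-point/Gronwall argument broadly of the type you outline.
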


Corollary \ref{C:LDPgivingLLNHeter} shows that $L_{t}$ can be expressed as the unique solution to a fixed point equation. As in the
law of large numbers, this gives us a reference trajectory for defining rare events.
For any closed subset $F$ of $C([0,T];[0,1])$ which does not contain $L$,
we have that
\begin{equation} \label{E:LDP}\lim_{N\to \infty}\BP\{ L^N\in F\} = 0. \end{equation}

The theory of large deviations gives us the \emph{rate} of decay of
probabilities like in \eqref{E:LDP}, namely the tail of the distribution of large portfolio losses.  It identifies a function $I:C([0,T];[0,1])\to [0,\infty]$ such that, informally,
\begin{equation*} \BP\{ L^N\approx L_{\text{ref}}\} \overset{N\to \infty}{\asymp}
\exp\left[-NI(L_{\text{ref}})\right] \end{equation*}

While rare events are (by definition) unlikely to happen, large deviations theory gives a rigorous
framework to compare the rarity of different rare events.  It thus helps us understand
the 'most likely' rare event in a given set of rare events.  As the numerical experiments in Section \ref{S:Numerics} indicate, larger sensitivity to contagion and systematic risk leads to fatter tails, which leads to larger likelihood of large losses in the system. Insights of this type can help understand the role of contagion and systematic risk and how they interact to produce atypically large failure rates. This can then provide guidance
in guarding against dangerous rare system behavior.

\section{Problem formulation and main results}\label{S:MainResults}
We here present our main results on large deviations for the empirical loss $L^N_t$. We start with some notation and preliminary computations in Subsection \ref{SS:Prelim}. The first result is Theorem \ref{T:LDPHeterogeneous1} in Subsection \ref{S:Heterogeneous}, where we derive the large deviations principle in the case $\eps_N=0$.
Lemma \ref{L:wconv} with $\eps_N=0$ follows as a result; see Corollary \ref{C:LDPgivingLLNHeter}.  Corollary \ref{C:LDPHeterogeneous2} gives an alternate formulation
of the rate function of Theorem \ref{T:LDPHeterogeneous1}; this representation
is useful for numerical studies. Then in Subsection \ref{SS:HeterogeneousSystematic}, we present, in Theorem \ref{T:LDPHeterogeneous2}, the second main result of this paper, which is the large deviations principle when $\eps_N\neq0$, i.e., when both contagion and systematic effects are present.

The large deviations principle of Theorem \ref{T:LDPHeterogeneous1} is obtained
by first identifying a large deviations principle for the empirical measure of defaults in the heterogeneous pool, $\bnu^N$, (properly defined in \eqref{E:nudef}),
and then using the contraction principle. The large deviations principle for $\bnu^N$ is proved in two steps. First, we derive the large deviations principle in the independent (i.e., when all $\beta^{C}_{\NN}=0$), but heterogeneous case. Varadhan's transfer lemma (recalled as Theorem \ref{T:TransferResult2}) then
implies the LDP for the general case. The large deviations principle of Theorem \ref{T:LDPHeterogeneous2} is obtained via a conditioning argument from the  large deviations principle of Theorem \ref{T:LDPHeterogeneous1} and that of small noise diffusion processes, \cite{FWBook}. In this section we present statement of theorems and the corresponding proofs are in Section \ref{S:LDPheterogeneous}.

Let's recall the concept of large deviations and the associated rate function.
\begin{definition}\label{Def:LDP}
If $S$ is a Polish space and $\BP$ is a probability measure on $(S,\Borel(S))$, we say that a collection $(\xi_n)_{n\in \N}$
of $S$-valued random variables has a large deviations principle with rate function $I:S\to [0,\infty]$ and speed $n$ if
\begin{enumerate}
\item For each $s\ge 0$, the set $\Phi(s) = \lb s\in S: I(s)\le s\rb$
is a compact subset of $S$.
\item For every open $G\subset S$,
\begin{equation*}
\varliminf_{n\nearrow\infty}\frac{1}{n}\ln \BP\lb \xi_n\in G\rb \geq -\inf_{s\in G} I(s)
\end{equation*}
\item For every closed $F\subset S$,
\begin{equation*}
\varlimsup_{n\nearrow\infty}\frac{1}{n}\ln \BP\lb \xi_n\in F\rb\leq -\inf_{s\in F} I(s).
\end{equation*}
\end{enumerate}
\end{definition}

\subsection{Preliminary computations}\label{SS:Prelim}

Let's set up some notation.  Fix  $\pp=(\alpha,\bar \lambda,\sigma,\beta^C,\beta^S,\lambda_{\circ})\in \Types$.
Fix also a time horizon $T>0$.
Let a Polish space $S$ and denote by $C(S;\R)$ and $AC(S;\R)$ to be the collection of continuous and respectively absolutely continuous paths from $S$ to $\R$. For notational convenience we will sometimes write $C(S;\R)$ and $AC(S;\R)$ if no confusion arises. For $\varphi$ and $\psi$ in $AC([0,T];\R)$
and $W^*$ a reference Brownian motion, let $\lambda^{\varphi,\psi}$
be the solution of the SDE
\begin{equation} \label{E:DuffiePanSingletonequation}\begin{aligned}
 \lambda^{\varphi,\psi}_t(\pp) &= \lambda_\circ -\alpha \int_{0}^t(\lambda^{\varphi,\psi}_s(\pp)-\bar \lambda)ds + \sigma \int_{0}^t\sqrt{\lambda^{\varphi,\psi}_s(\pp)}dW^*_s \\
&\qquad +\beta^C  \varphi(t)\chi_{[0,T]}(t) + \beta^S \int_{0}^t\lambda^{\varphi,\psi}_s(\pp) \chi_{[0,T]}(s) d\psi(s) \qquad t>0  \end{aligned}\end{equation}

This will represent the conditional intensity of a ``randomly-selected'' name of type ${\pp}$ in our pool.  Let the random variable $\tau^{\varphi,\psi}$ be such that
\begin{equation}\label{E:fdens}
\BP\lb \tau^{\varphi,\psi}\leq t\rb=\BP\lb \int_{0}^t \lambda^{\varphi,\psi}_s ({\pp})ds\geq\ee\rb = 1-\BE\left[\exp\left[-\int_{0}^t \lambda^{\varphi,\psi}_s ({\pp})ds\right]\right] \end{equation}
for all $t>0$, where $\ee$ is an exponential $(1)$ random variable which is independent of $W^*$.
Define
\begin{equation*}\label{E:fdef}
f_{\varphi,\psi}^{{\pp}}(t) \Def \BE\left[\lambda^{\varphi,\psi}_t({\pp})\exp\left[-\int_{0}^t \lambda^{\varphi,\psi}_s({\pp}) ds\right]\right].
\end{equation*}

Then for $t\in[0,T]$
\begin{align}
\BP\lb \tau^{\varphi,\psi}\leq t\rb &= \int_{0}^t f_{\varphi,\psi}^{ {\pp}}(s)ds= 1- \BE\left[\exp\left[-\int_{0}^t  \lambda^{\varphi,\psi}_s({\pp}) ds\right]\right].
\label{Eq:ProbDistribution}
 \end{align}

Let's now collect together all scenarios where $\tau^{\varphi,\psi}>T$.  Fix an abstract point $\star$ not in $[0,T]$ and define
\begin{equation*} \TInt\Def [0,T]\cup \{\star\} \end{equation*}
(which is a Polish space).
For given trajectories $\varphi$ and $\psi$ in $AC([0,T];\R)$,
define $\mu_{\varphi,\psi}^{{\pp}}\in \Pspace(\TInt)$ as
\begin{equation}\label{E:muDef}  \mu_{\varphi,\psi}^{{\pp}}(A) \Def \int_{t\in A\cap [0,T]}f_{\varphi,\psi}^{{\pp}}(t)dt + \delta_{\pt}(A)\lb 1- \int_{0}^T f_{\varphi,\psi}^{{\pp}}(t)dt\rb  \end{equation}
for all $A\in \Borel(\TInt)$. In other words, $ \mu_{\varphi,\psi}^{{\pp}}$ is the corresponding probability measure on  $\TInt$.

The large deviations principles appearing  in Theorems \ref{T:LDPHeterogeneous1} and \ref{T:LDPHeterogeneous2} are in terms $\mu_{\varphi,\psi}^{{\pp}}$. Moreover, taking advantage of the special structure of the problem, Lemma \ref{L:DensityFcn} shows that $\mu_{\varphi,\psi}^{{\pp}}$ can be computed in closed form.

\subsection{Large deviations principle for the case $\eps_N=0$.}\label{S:Heterogeneous}
In this case the exogenous source of randomness, $X$ plays no role in the computations. Define the probability measures $\nu^{N}\in \Pspace({\Types}\times \TInt)$
\begin{equation} \label{E:nudef}
\bnu^{N}=\frac{1}{N}\sum_{n=1}^{N}\delta_{\pp^{\NN},\tau_{n}}\chi_{\{\tau_n\le T\}} + \frac{1}{N}\sum_{n=1}^{N}\delta_{\pp^{\NN},\star}\chi_{\{\tau_n> T\}}
\end{equation}
This captures the distribution of $\pp^\NN$ (the 'type' of the asset) and $\tau^\NN$ (the
default time).

For notational convenience set
\begin{equation*} \PT\Def \Types \times \TInt\end{equation*}
then $\PT$ is Polish.  We
can of course recover the empirical loss from $\bnu^N$;  $L^{N}_{t}=\bnu^{N}({\Types}\times[0,t])$.

Let's make the following definition.
\begin{definition}\label{Def:StochasticKernel}
Fix $\omega\in \Pspace(\PT)$ and $\upsilon\in \Pspace(\Types)$; we say that $\omega=\xi\otimes\upsilon $, where $\xi$ is a measurable map from $ \Types$ to $\Pspace(S)$ \textup{(}i.e., $\xi$ is a stochastic kernel\textup{)}
if
\[
\omega(A\times B)=\int_{\pp\in A}\xi( \pp)(B)\upsilon(d{\pp}).
\]
for all $A\in \Borel( \Types)$ and $B\in \Borel(\TInt)$.  In this case, we write
\begin{equation*} \xi = \frac{\partial \omega}{\partial \upsilon}. \end{equation*}
\end{definition}

Let us define
\begin{equation}\label{E:ClassicalEntropy}
H(\nu,\mu)=\begin{cases}
\int_{t\in \TInt}\ln\frac{d\nu}{d\mu}(t)\nu(dt), &\text{if $\nu\ll\mu$}\\
\infty, &\text{otherwise}
\end{cases}
\end{equation}
and then
\begin{equation}\label{E:HDef}
\bar{H}(\nu)=\begin{cases}
\int_{{\pp}\in{\Types}}H\left(\frac{\partial \nu}{\partial U}({\pp}),\mu_{\bar{\nu},0}^{{\pp}}\right)U(d{\pp}) & \text{if $\frac{\partial\nu}{\partial U}({\pp})$ exists}\\
\infty &\text{otherwise}
\end{cases}
\end{equation}
and
where $\mu_{\bar{\nu},0}^{{\pp}}$ is defined in \eqref{E:muDef},  $U$ is given by Assumption \ref{A:regularity}, the superscript ${\pp}$ denotes the dependence on the particular element $ \pp=(\alpha,\bar \lambda,\sigma,\beta^C,\beta^S,\lambda)\in {\Types}$, and for $B\in\mathcal{B}(\TInt)$, we set
\[
\bar{\nu}(B)=\nu(\PP\times B).
\]

The form of the action functional is fairly easy to understand, at least
in the homogeneous case.  Suppose that there is a fixed $ \pp^*\in  \Types$
and $ \pp^\NN =  \pp^*$
for all $N\in \N$ and $n\in \{1,2,\dots, N\}$ (and thus $U=\delta_\pp$).  If $\nu^*=\delta_{\pp^*}\times \omega$ for some $\omega\in \Pspace(\TInt)$, then $\frac{\partial \nu^*}{\partial U}(\pp^*)=\omega$ and one expects that
\begin{equation*}
\BP_N(d\bnu^{N}\in d\nu^{*})
\overset{N\to \infty}{\asymp}   e^{- NH\left(\omega,\mu^{ \pp^*}_{\nu^{*},0}\right)}
=e^{-N\bar H(\nu^{*})}.\end{equation*}

Consider next a heterogeneous pool with two fixed types $ \pp^*_A$
and $ \pp^*_B$.  Assume that, in a pool of size $N$, every third name is of type $ \pp^*_A$ the remaining names are of type $ \pp^*_B$.   If
\begin{equation*} \nu^* = \frac13 \delta_{\pp^*_A}\times \omega_A + \frac23 \delta_{\pp^*_B}\times \omega_B, \end{equation*}
then
\begin{equation*} \frac{\partial \nu^*}{\partial U}(\pp) =\begin{cases} \omega_A &\text{if $\pp=\pp^*_A$} \\
\omega_B &\text{if $\pp=\pp^*_B$} \end{cases}\end{equation*}
Then
\begin{align*}
\frac{\partial \nu^N}{\partial U_N}(\pp^*_A)&= \frac1{N/3}\sum_{\substack{1\le n\le N \\n\in 3\N}}\delta_{\tau_n}\chi_{\{\tau_n\le T\}} + \frac1{N/3}\sum_{\substack{1\le n\le N \\n\in 3\N}}\delta_{\star}\chi_{\{\tau_n>T\}}\\
\frac{\partial \nu^{N}}{\partial U_N}(\pp^*_B)&= \frac1{2N/3}\sum_{\substack{1\le n\le N \\n\not\in 3\N}}\delta_{\tau_n}\chi_{\{\tau_n\le T\}} + \frac1{2N/3}\sum_{\substack{1\le n\le N \\n\not\in 3\N}}\delta_{\star}\chi_{\{\tau_n>T\}}
\end{align*}

At a heuristic level, one expects that
\begin{align*}
\BP_N(d\bnu^N\in d\nu^*)&=\BP_N\lb  \frac{\partial \bnu^N}{\partial U_N}(\pp^*_A) \approx d\omega_A,\, \frac{\partial \bnu^N}{\partial U_N}(\pp^*_B) \approx d\omega_B\rb \\
&\overset{N\to \infty}{\asymp}  \exp\left[-\frac{N}{3}H\left( \omega_A,\mu_{\bar{\nu},0}^{{\pp^*_A}}\right)
-\frac{2N}{3}H\left( \omega_B,\mu_{\bar{\nu},0}^{{\pp^*_B}}\right) \right]
=e^{-N\bar{H}(\nu^{*})}.
\end{align*}

Theorem \ref{T:LDPmeasuresHeter} gives the rigorous proof that,
in general, $\bar{H}(\nu)$ is the rate function for $\{\bnu^{N},N<\infty\}$ in the heterogeneous case. An immediate consequence of Theorem \ref{T:LDPmeasuresHeter} and contraction principle is the large deviations principle for $L^{N}_{t}=\bnu^{N}(\Types\times [0,t])$.
\begin{theorem}\label{T:LDPHeterogeneous1}
Consider the system defined in \eqref{E:main} with $\eps_N=0$ and let $T<\infty$. Under Assumptions \ref{A:Bounded} and \ref{A:regularity}, the family $\{L^{N}_{T},N\in\mathbb{N}\}$ satisfies the large deviation bounds of
Definition \ref{Def:LDP}, with rate function
\[
I(\ell)=\inf\lb \bar{H}(\nu): \text{$\nu\in \Pspace(\PT)$ and $\nu\left({\Types}\times [0,T]\right)=\ell$}\rb.
\]
and speed $N$. The rate function $I$ is lower semicontinuous and has compact level sets.
\end{theorem}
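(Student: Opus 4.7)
The plan is to deduce the theorem as a direct application of the contraction principle, leveraging the large deviations principle for the full empirical measure $\bnu^N$ supplied by Theorem \ref{T:LDPmeasuresHeter}, which gives speed $N$ and good rate function $\bar{H}$ on $\Pspace(\PT)$. Under this reduction, the only genuinely new ingredient required for Theorem \ref{T:LDPHeterogeneous1} is a continuity check for the evaluation functional mapping the empirical measure to the empirical loss at time $T$.

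First I would introduce the map $F : \Pspace(\PT) \to [0,1]$ defined by $F(\nu) \Def \nu(\Types \times [0,T])$, so that $L^N_T = F(\bnu^N)$ is built into the definition of $\bnu^N$ in \eqref{E:nudef}. The key topological observation is that $\TInt = [0,T]\cup\{\pt\}$ is the disjoint union of the compact interval $[0,T]$ with the isolated cemetery point $\pt$, so $\Types \times [0,T]$ is a clopen subset of $\PT$ with empty topological boundary. The portmanteau theorem then gives $\nu_n(\Types\times[0,T]) \to \nu(\Types\times[0,T])$ whenever $\nu_n \to \nu$ weakly in $\Pspace(\PT)$, i.e.\ $F$ is continuous on $\Pspace(\PT)$.

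Second, with continuity of $F$ in hand and Theorem \ref{T:LDPmeasuresHeter} furnishing the LDP for $\{\bnu^N\}$ at speed $N$ with good rate function $\bar{H}$, the contraction principle (see e.g.\ \cite{DemboZeitouni}) applies to the push-forward family $\{F(\bnu^N)\}=\{L^N_T\}$ and produces an LDP at the same speed $N$ with rate function
\[
I(\ell) = \inf\lb \bar{H}(\nu) : \nu \in \Pspace(\PT),\; \nu(\Types \times [0,T]) = \ell \rb,
\]
which is precisely the expression in the statement. Moreover, the contraction principle automatically delivers the lower semicontinuity and compactness of the level sets of $I$, inherited from the corresponding properties of $\bar{H}$; the preimage $F^{-1}(\{\ell\})$ is closed in $\Pspace(\PT)$ by continuity of $F$, so the infimum of the good rate function $\bar{H}$ is attained on each nonempty finite-value sublevel set.

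The main obstacle in the overall argument is therefore not in this step, which is essentially a soft consequence of the continuity of $F$ once Theorem \ref{T:LDPmeasuresHeter} is available, but rather is concentrated in the upstream proof of the LDP for $\bnu^N$ itself, where one must handle the heterogeneity of the pool and the coupling of the intensities through $L^N$ via Varadhan's transfer lemma and a prior LDP in the independent ($\beta^C_\NN=0$) case, as outlined in Subsection \ref{S:Heterogeneous}.
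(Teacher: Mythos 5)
Your proposal is correct and follows exactly the route the paper takes: the paper's proof of Theorem \ref{T:LDPHeterogeneous1} is the one-line remark that it "follows by Theorem \ref{T:LDPmeasuresHeter} and contraction principle," and you have simply supplied the details that the paper leaves implicit, namely the identification $L^N_T=\bnu^N(\Types\times[0,T])$ and the verification that $\nu\mapsto\nu(\Types\times[0,T])$ is weakly continuous because $\Types\times[0,T]$ is clopen in $\PT$ (as $\star$ is an isolated point of $\TInt$). Your closing remarks about where the real work lies (the LDP for $\bnu^N$ via the independent case and Varadhan's transfer lemma) also match the paper's outline in Subsection \ref{S:Heterogeneous}.
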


As expected the rate function is essentially defined via an entropy functional, which in the present setting takes the form $H\left(\frac{\partial \nu}{\partial U}({\pp}),\mu_{\bar{\nu},0}^{{\pp}}\right)$. However, due to the heterogeneity of the environment and due to the feedback term $dL^{N}_{t}$ in the system, new phenomena appear. The effect of heterogeneity is to essentially integrate over all the different types in the $\PP$ space, whereas the  feedback term is responsible for the subscript $\bar{\nu}$ in the $\mu_{\bar{\nu},0}^{{\pp}}$, which is a non-linear effect in the entropy.

For $\xi,\varphi\in AC([0,T];\mathbb{R})$ let us define the functional
\[
g\left(\xi,f^{{\pp}}_{\varphi,0}\right)=\int_{0}^T \ln\left(\frac{\dot \xi(t)}{f^{{\pp}}_{\varphi,0}(t)}\right)\dot \xi(t)dt+ \ln \left(\frac{1-\xi(T)}{1-\int_{0}^T f^{{\pp}}_{\varphi,0}(t)dt}\right)\left(1- \xi(T)\right)
\]

Next, in Corollary \ref{C:LDPHeterogeneous2}, we note that the rate function $I$ has a straightforward  alternate representation which is a bit more suited to numerical investigations.
\begin{corollary}\label{C:LDPHeterogeneous2}
Consider the system defined in \eqref{E:main} with $\eps_N=0$. Define
\[
I'(\ell)=\inf_{\varphi \in AC({\Types}\times [0,T];\R), \varphi\geq 0}\left\{\int_{{\Types}}g\left(\varphi({\pp}),f^{{\pp}}_{\bar{\varphi},0}(t)\right)U(d{\pp}): \varphi({\pp},0)=0,\bar{\varphi}(s)=\int_{{\Types}}\varphi({\pp},s)U(d{\pp}), \textrm{ and }\bar{\varphi}(T)=\ell \right\}
\]
Under the assumptions and notation of Theorem \ref{T:LDPHeterogeneous1}, $I=I'$.
\end{corollary}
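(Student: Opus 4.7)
The plan is to exhibit an explicit bijection between the feasible sets of the two variational problems and to verify that the objectives agree under this bijection. The bijection is based on the observation that, for fixed ${\pp}\in\Types$, the measure $\mu_{\bar{\nu},0}^{{\pp}}$ has density $f_{\bar{\nu},0}^{{\pp}}(t)$ on $[0,T]$ (with respect to Lebesgue) and an atom of mass $1-\int_0^T f_{\bar{\nu},0}^{{\pp}}(t)dt$ at $\star$, so absolute continuity of $\frac{\partial\nu}{\partial U}({\pp})$ with respect to $\mu_{\bar{\nu},0}^{{\pp}}$ is inherited by the cumulative mass function.

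First, given $\nu\in \Pspace(\PT)$ with $\nu(\Types\times[0,T])=\ell$ and $\bar{H}(\nu)<\infty$, define
\[
\varphi({\pp},t)\Def \left(\tfrac{\partial \nu}{\partial U}({\pp})\right)([0,t]),\qquad t\in[0,T].
\]
Since $\bar{H}(\nu)<\infty$ forces $\tfrac{\partial\nu}{\partial U}({\pp})\ll \mu_{\bar{\nu},0}^{{\pp}}$ for $U$-a.e.\ ${\pp}$, the map $t\mapsto \varphi({\pp},t)$ is absolutely continuous, nonnegative, and vanishes at $t=0$. A Fubini computation gives $\bar{\varphi}(s)=\int_{\Types}\varphi({\pp},s)U(d{\pp})=\nu(\Types\times[0,s])=\bar{\nu}([0,s])$; in particular $\bar{\varphi}(T)=\ell$ and $\mu^{{\pp}}_{\bar{\varphi},0}=\mu^{{\pp}}_{\bar{\nu},0}$. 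Hence $\varphi$ is admissible in the $I'$--problem. Conversely, given admissible $\varphi$ in $I'$, define
\[
\nu(d{\pp},B)\Def \left[\int_{B\cap[0,T]}\dot\varphi({\pp},t)\,dt + (1-\varphi({\pp},T))\delta_{\star}(B)\right]U(d{\pp});
\]
then $\frac{\partial\nu}{\partial U}({\pp})$ has density $\dot\varphi({\pp},t)$ on $[0,T]$ and an atom $1-\varphi({\pp},T)$ at $\star$, and again $\bar{\nu}([0,s])=\bar{\varphi}(s)$, so $\nu$ is admissible in the $I$--problem with $\nu(\Types\times[0,T])=\ell$. These two constructions are mutually inverse on the subsets where the respective objectives are finite.

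Second, a direct computation of the Radon--Nikodym density of $\frac{\partial\nu}{\partial U}({\pp})$ with respect to $\mu^{{\pp}}_{\bar{\varphi},0}$ yields $\dot\varphi({\pp},t)/f^{{\pp}}_{\bar{\varphi},0}(t)$ on $[0,T]$ and $(1-\varphi({\pp},T))/(1-\int_0^T f^{{\pp}}_{\bar{\varphi},0}(t)dt)$ at $\star$. Substituting in the definition \eqref{E:ClassicalEntropy} of $H$ gives, pointwise in ${\pp}$,
\[
H\!\left(\tfrac{\partial\nu}{\partial U}({\pp}),\mu_{\bar{\varphi},0}^{{\pp}}\right)=g\!\left(\varphi({\pp},\cdot),f^{{\pp}}_{\bar{\varphi},0}\right),
\]
and integrating against $U(d{\pp})$ yields $\bar{H}(\nu)=\int_{\Types}g(\varphi({\pp},\cdot),f^{{\pp}}_{\bar{\varphi},0})U(d{\pp})$. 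Because the bijection preserves both the constraint $\bar{\varphi}(T)=\ell=\nu(\Types\times[0,T])$ and the value of the functional, taking infima over either side produces $I=I'$.

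The main subtlety will be the regularity of the map ${\pp}\mapsto \varphi({\pp},\cdot)$: I need to argue that when $\bar{H}(\nu)<\infty$ one can choose a jointly measurable version of $\tfrac{\partial\nu}{\partial U}$ (standard for Polish spaces; cf.\ the existence of regular conditional probabilities), so that $\varphi\in AC(\Types\times[0,T];\R)$ in the sense required in the statement of Corollary \ref{C:LDPHeterogeneous2}. A secondary, but purely technical, point is to check that $f^{{\pp}}_{\bar{\varphi},0}(t)>0$ for $t\in(0,T]$ and $U$-a.e.\ ${\pp}$ along admissible trajectories, so that the logarithms in $g$ and in the entropy are well-defined; this follows from \eqref{E:DuffiePanSingletonequation} and Assumption \ref{A:Bounded}, which force $\lambda^{\varphi,0}_t({\pp})$ to be strictly positive with positive probability for all $t>0$.
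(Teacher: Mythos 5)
Your argument is correct, and it supplies the justification that the paper itself omits: Corollary \ref{C:LDPHeterogeneous2} is stated after Theorem \ref{T:LDPHeterogeneous1} as a ``straightforward alternate representation'' with no written proof, so the intended reasoning is exactly the change of variables you spell out. The correspondence $\varphi(\pp,t)=\bigl(\tfrac{\partial\nu}{\partial U}(\pp)\bigr)([0,t])$, together with the observation that $\mu^{\pp}_{\bar\nu,0}$ has Lebesgue density $f^{\pp}_{\bar\nu,0}$ on $[0,T]$ and an atom of mass $1-\int_0^Tf^{\pp}_{\bar\nu,0}\,dt$ at $\star$, converts the conditional relative entropy $H\bigl(\tfrac{\partial\nu}{\partial U}(\pp),\mu^{\pp}_{\bar\nu,0}\bigr)$ into $g\bigl(\varphi(\pp,\cdot),f^{\pp}_{\bar\varphi,0}\bigr)$, and your Fubini step identifying $\bar\varphi$ with $\bar\nu([0,\cdot])$ shows the nonlinearity matches on both sides.

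One small point worth making explicit, which you partially absorb into ``on the subsets where the respective objectives are finite'': the constraint set in the definition of $I'$ does not by itself require $t\mapsto\varphi(\pp,t)$ to be nondecreasing or bounded by $1$, so a priori a feasible $\varphi$ need not produce a probability measure $\nu$ under your inverse map. However, if $\dot\varphi(\pp,t)<0$ on a set of positive measure, or if $\varphi(\pp,T)>1$ on a set of positive $U$-measure, then the integrand in $g$ involves the logarithm of a negative quantity and the objective is $+\infty$; such $\varphi$ therefore do not contribute to the infimum, and the bijection on the finite sublevel sets is exactly as you claim. The measurability of a jointly measurable version of the kernel $\tfrac{\partial\nu}{\partial U}$ is standard on Polish spaces, as you note, so that issue is genuinely technical rather than substantive.
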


An immediate consequence of Theorem \ref{T:LDPHeterogeneous1} is the law of large numbers result for $L^{N}_{t}$ obtained in \cite{GSS2013}.
\begin{corollary}\label{C:LDPgivingLLNHeter}   Assume that $\eps_N=0$.
There is a unique measure $\nu^{*}$ such that for every $t\in[0,T]$
\[
\bar{\nu}^{*}([0,t])=1-\int_{{\Types}}e^{-\left(b^{{\pp}}(t)\lambda_{0}+\int_{0}^{t}\alpha\bar{\lambda}b^{{\pp}}(t-u)du+\beta^{C}\int_{0}^{r}b^{{\pp}}(t-u)\bar{\nu}^{*}(du)\right)}U(d{\pp});
\]
this is the unique solution of $\bar{H}(\nu)=0$.  Finally, $\bar{\nu}^{*}([0,t])=L_{t}$
for all $t\in [0,T]$, where $L$ was given in Lemma \ref{L:wconv}.
\end{corollary}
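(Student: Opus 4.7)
My plan is to exploit the large deviations principle for $\bnu^{N}$ (Theorem \ref{T:LDPmeasuresHeter}) together with the characterization of the zeros of the rate function $\bar{H}$ defined in \eqref{E:HDef}. Since classical relative entropy is nonnegative and vanishes only when the two measures coincide, $\bar{H}(\nu^{*})=0$ forces $\frac{\partial \nu^{*}}{\partial U}(\pp)=\mu^{\pp}_{\bar{\nu}^{*},0}$ for $U$-almost every $\pp\in \Types$. Marginalizing on $\TInt$ then yields the fixed-point identity
\[
\bar{\nu}^{*}(B)=\int_{\Types}\mu^{\pp}_{\bar{\nu}^{*},0}(B)\,U(d\pp),\qquad B\in\Borel(\TInt),
\]
which reduces the whole question to a single scalar Volterra-type equation for $\eta(t)\Def\bar{\nu}^{*}([0,t])$.

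The next step is to evaluate $\mu^{\pp}_{\bar{\nu}^{*},0}([0,t])$ in closed form. Taking $\varphi(t)=\eta(t)$ and $\psi\equiv 0$ in \eqref{E:DuffiePanSingletonequation}, $\lambda^{\varphi,0}$ is the CIR intensity perturbed by the deterministic pathwise forcing $\beta^{C}\eta(t)$. By \eqref{Eq:ProbDistribution},
\[
\mu^{\pp}_{\bar{\nu}^{*},0}([0,t])=1-\BE\left[\exp\left(-\int_{0}^{t}\lambda^{\varphi,0}_{s}(\pp)\,ds\right)\right],
\]
and the affine Feynman--Kac computation carried out in Lemma \ref{L:DensityFcn} represents this expectation as an exponential-of-affine expression in which the Riccati function $b^{\pp}(\cdot)$ (arising from the $\sigma\sqrt{\lambda}$-diffusion) produces the three terms appearing in the corollary: $b^{\pp}(t)\lambda_{0}$ from the initial condition, $\int_{0}^{t}\alpha\bar{\lambda}\,b^{\pp}(t-u)\,du$ from the mean-reversion drift, and $\beta^{C}\int_{0}^{t}b^{\pp}(t-u)\,d\eta(u)$ from the contagion forcing (via integration by parts in the $\beta^{C}\varphi(t)$ term). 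Substituting yields precisely the integral equation in the statement.

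For uniqueness I would treat the identity as a nonlinear Volterra equation for $\eta\in C([0,T];[0,1])$. Under Assumption \ref{A:Bounded}, the parameters $(\alpha,\bar{\lambda},\sigma,\beta^{C},\lambda_{\circ})$ lie in a compact subset of $\Types$ on $\mathrm{supp}\,U$, and $b^{\pp}$ is bounded on $[0,T]$ uniformly in $\pp$, so the map
\[
\Psi[\eta](t)\Def 1-\int_{\Types}\exp\left(-b^{\pp}(t)\lambda_{0}-\int_{0}^{t}\alpha\bar{\lambda}\,b^{\pp}(t-u)\,du-\beta^{C}\int_{0}^{t}b^{\pp}(t-u)\,d\eta(u)\right)U(d\pp)
\]
is Lipschitz in $\eta$ in the sup norm. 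A short-time contraction followed by iteration on successive intervals, or equivalently the exponentially-weighted norm $\|\eta\|_{\gamma}=\sup_{0\le t\le T}e^{-\gamma t}|\eta(t)|$ with $\gamma$ large, turns $\Psi$ into a genuine contraction and produces a unique fixed point. Together with the first paragraph, this shows that $\bar{H}$ vanishes at exactly one measure $\nu^{*}$.

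Finally, to identify $\bar{\nu}^{*}([0,t])$ with the law of large numbers limit $L_{t}$ of Lemma \ref{L:wconv}: the LDP of Theorem \ref{T:LDPmeasuresHeter} with good rate function $\bar{H}$, combined with uniqueness of its zero, forces $\bnu^{N}\to\nu^{*}$ in probability in $\Pspace(\PT)$. Continuity of the evaluation map $\nu\mapsto\nu(\Types\times[0,t])$ at $\nu^{*}$ then gives $L^{N}_{t}=\bnu^{N}(\Types\times[0,t])\to\bar{\nu}^{*}([0,t])$ in probability, and matching this with the convergence $L^{N}\to L$ from Lemma \ref{L:wconv} yields $L_{t}=\bar{\nu}^{*}([0,t])$. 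The principal technical obstacle I expect is the affine/Riccati closed-form evaluation together with the uniform-in-$\pp$ Lipschitz estimates needed for Volterra uniqueness; the LDP-to-LLN matching at the end is comparatively soft once both limit statements are in hand.
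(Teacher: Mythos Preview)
Your argument for the first half---that $\bar H(\nu^{*})=0$ forces $\frac{\partial\nu^{*}}{\partial U}(\pp)=\mu^{\pp}_{\bar\nu^{*},0}$ $U$-a.e., and that marginalizing plus the affine computation of Lemma~\ref{L:DensityFcn} yields the displayed fixed-point equation---matches the paper's proof exactly.

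Where you diverge is in the second half. The paper does not run a direct Volterra/contraction argument for uniqueness, nor does it derive the law of large numbers from the LDP. Instead it invokes Lemma~4.1 of \cite{GSS2013}, which gives an independent characterization of the limit $L_{t}$ from Lemma~\ref{L:wconv} via a unique pair $(Q,\lambda^{*})$, and then checks (again via Lemma~\ref{L:DensityFcn}) that this $L_{t}$ satisfies the same fixed-point equation; uniqueness of the pair in \cite{GSS2013} then supplies uniqueness of the solution and the identification $L_{t}=\bar\nu^{*}([0,t])$ simultaneously. Your route is more self-contained: the weighted-norm contraction is legitimate once one integrates by parts to rewrite $\int_{0}^{t}b^{\pp}(t-u)\,d\eta(u)=\int_{0}^{t}\dot b^{\pp}(t-u)\eta(u)\,du$ (using $b^{\pp}(0)=\eta(0)=0$ and the uniform boundedness of $\dot b^{\pp}$ noted in the paper), and the ``LDP with good rate function and unique zero implies convergence in probability'' step is standard. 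The one point to make explicit is that the evaluation $\nu\mapsto\nu(\Types\times[0,t])$ is only continuous at $\nu^{*}$ because $\bar\nu^{*}$ has no atom at $t$, which follows from $\mu^{\pp}_{\bar\nu^{*},0}$ having a density on $[0,T]$. With that caveat, both approaches are correct; yours avoids the external reference at the cost of a short extra analytic argument.
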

\noindent The proof of Corollary \ref{C:LDPgivingLLNHeter} is in Section \ref{SS:LDPDependentHeterogeneous}.

As an example of possible use of Theorem \ref{T:LDPHeterogeneous1} and Corollary \ref{C:LDPHeterogeneous2}, let's
consider  homogenous pools and heterogeneous pools composed of $K$ different bins that are homogeneous within each bin.

\begin{example}[Homogeneous] Fix $ \pp^*\in  \Types$ and assume
that $ \pp^\NN= \pp^*$ for all $N$ and $n$.
Then
\begin{align*}
I(\ell)&=\inf\left\{g\left(\varphi, f^{{\pp}}_{\varphi,0}\right):  \varphi(0)=0, \varphi(T)=\ell, \varphi\geq 0,\varphi\in AC([0,T];\R) \right\}.\nonumber
\end{align*}
\end{example}

\begin{example}[Heterogeneous]
Let us assume that the pool is composed of $K$ different bins.  Assume that $\kappa_{i}\%$ of the names are of type $A_{i}$ with $i=1,\cdots, K$ and $\sum_{i=1}^{K}\kappa_{i}=100$.
Setting $\varphi({\pp},s)=\sum_{i=1}^{K}\frac{\kappa_{i}}{100}\varphi_{A_{i}}(s)\chi_{\{{\pp}_{A_{i}}\}}$, we get that
\begin{align*}
I(\ell)&=\inf\left\{\sum_{i=1}^{K}\frac{\kappa_{i}}{100}g\left(\varphi_{A_{i}}, f^{{\pp}_{A_{i}}}_{\varphi,0}\right): \varphi(t)=\sum_{i=1}^{K}\frac{\kappa_{i}}{100}\varphi_{A_{i}}(t) \textrm{ for every }t\in[0,T]\right.\nonumber\\
&\qquad\qquad\qquad\left. \varphi(T)=\ell, \varphi_{A_{i}}(0)=0, \varphi_{A_{i}}\geq 0, \varphi_{A_{i}}\in AC([0,T];\R) \textrm{ for every }i=1,\cdots, K\right\}.\nonumber
\end{align*}
\end{example}

\subsection{Main result: Large deviations principle for the case $\lim_{N\rightarrow \infty}\eps_N= 0$.}\label{SS:HeterogeneousSystematic}

In this subsection, we study the case where systematic effects are present. When $\eps_N\neq 0$, the large deviations of process $\{X^{N}_{t}=\eps_N X_t, N\in\N\}$ affect the large deviations of the empirical default rate process $\{L^{N}_{t}, N\in\N\}$. In order to properly formulate our result we need to make some assumptions on the scaling properties of the coefficients of the $X$ process. These are minimal assumptions that guarantee the existence of a large deviations principle for the family $\{X^{N}_{t}=\eps_{N}X_{t}, N\in \N\}$. In particular:
\begin{assumption}\label{A:CoefficientsX}
We assume that the following limits exist uniformly on bounded subsets of $\R$
\begin{itemize}
\item{$\bar{b}(x)=\lim_{\eps\downarrow 0} b_{\eps}(x)=\lim_{\eps\downarrow 0}\eps b(x/\eps)$.}
\item{For some $\zeta\in(0,1]$, $\bar{\kappa}(x)=\lim_{\eps\downarrow 0} \kappa_{\eps}(x)=\lim_{\eps\downarrow 0}\eps^{1-\zeta} \kappa(x/\eps)$.}
\end{itemize}
Moreover, the coefficients $\bar{b}(x),\bar{\kappa}(x)$ are uniformly continuous on compact subsets of $\R$ and we assume that the SDE with drift coefficient $\bar{b}(x)$
 and diffusion coefficient $\bar{\kappa}(x)$ has a unique strong solution.
\end{assumption}
For any $u\in L^{2}\left([0,T];\R\right)$ define the map $\Gamma:L^{2}\left([0,T];\R\right)\mapsto C([0,T];\R)$  by the equation
\begin{equation}
\psi(t)=\int_{0}^{t}\bar{b}(\psi(s))ds+\int_{0}^{t}\bar{\kappa}(\psi(s))u(s)ds\label{Eq:ControlProblem}
\end{equation}
\begin{assumption}\label{A:CoefficientsX2}
We assume that for any $u\in L^{2}\left([0,T];\R\right)$ the map $\Gamma:L^{2}\left([0,T];\R\right)\mapsto C([0,T];\R)$ defined by (\ref{Eq:ControlProblem}) is well defined and (\ref{Eq:ControlProblem}) has a unique solution. Moreover, we assume that for every $N\in\N$, the map $\Gamma$ is continuous when it is restricted to the set $\{u\in L^{2}\left([0,T];\R\right): \int_{0}^{T}|u(s)|^{2}ds\leq N\}$ endowed with the weak topology of $L^{2}[0,T]$.
\end{assumption}
We need one more assumption.
\begin{assumption}\label{A:CoefficientsX3}
Let $u\in \mathcal{A}$ the set of square integrable on $[0,T]$, $\R$-valued and $\mathfrak{F}_{t}$ predictable processes and consider the controlled sde
\[
dX^{\eps,u}_t = \left[b_{\eps}( X^{\eps,u}_t)+ \kappa_{\eps}( X^{\eps,u}_t)u(t)\right]dt + \eps^{\zeta}\kappa_{\eps}(X^{\eps,u}_t)dV_t, \quad X^{\eps,u}_{0}=0.
\]
If $\lim_{n\rightarrow\infty}\eps_{n}=0$ and $\{u_{n}\}_{n\in\N}\mathcal{A}$ such that $\sup_{n\in\N}\int_{0}^{T}|u_{n}(s)|^{2}ds\leq N$ almost surely, then $X^{\eps_{n},u_{n}}$ is tight in $C\left([0,T];\R\right)$ and
\[
\sup_{n\in\N}\mathbb{E}\int_{0}^{T}|\bar{\kappa}(X^{\eps_{n},u_{n}}_{s})|^{2}ds<\infty.
\]
\end{assumption}

As we shall see in Lemma \ref{L:LDP_X} of Subsection \ref{SS:LDPDependentHeterogeneous2}, under Assumptions \ref{A:CoefficientsX}, \ref{A:CoefficientsX2} and \ref{A:CoefficientsX3}, the large deviations principle for the family $\{X^{N}_{t}=\eps_{N}X_{t}, N\in \N\}$ on $C\left([0,T];\R\right)$ with speed $1/\eps^{2\zeta}_{N}$ is the same as the large deviations principle and with the same speed for the family $\{\bar{X}^{N}_{t}, N\in \N\}$, where
\begin{equation}
d\bar{X}^{N}_t = \bar{b}( \bar{X}^{N}_t) dt + \eps_{N}^{\zeta}\bar{\kappa}(\bar{X}^{N}_t)dV_t, \quad \bar{X}^{N}_{0}=0.\label{Eq:EquivalentAlternativeXprocess}
\end{equation}

In this case,  the large deviations action functional for $\{\bar{X}^{N}_{t}, N\in \N\}$ in $C([0,T];\R)$ is
\begin{equation}
J_{X}(\psi)=\inf\left\{\frac{1}{2}\int_{0}^{T}\left|u(s)\right|^{2}ds: u\in L^{2}\left([0,T];\R\right), \Gamma(u)=\psi \right\}\label{Eq:RateFunctionXprocesses}
\end{equation}
whenever  $\{u\in L^{2}\left([0,T];\R\right), \Gamma(u)=\psi\}\neq\emptyset$ and $J_{X}(\psi)=\infty$ otherwise.

We remark here that if $\bar{\kappa}(x)\neq 0$ for all $x\in\R$, then for $\psi\in AC\left([0,T];\R\right)$ with $\psi(0)=0$, we have the simplified
 well known form, see Section 5.3 of \cite{FWBook},
\begin{equation*}
J_{X}(\psi)=\frac{1}{2}\int_{0}^{T}\left|\frac{\dot{\psi}(t)-\bar{b}(\psi(s))}{\bar{\kappa}(\psi(s))}\right|^{2}ds
\end{equation*}
and $J_{X}(\psi)=\infty$ otherwise. However, in general, the form of the rate function is given by \eqref{Eq:RateFunctionXprocesses}. 

\begin{example}\label{E:LDP_Xprocess}
Two classical examples, where  Assumptions \ref{A:CoefficientsX}, \ref{A:CoefficientsX2} and \ref{A:CoefficientsX3} hold and thus the LDP  for the process $\{{X}^{N}, N\in\N\}$ holds, are (a): the Ornstein-Uhlenbeck process with
$b(x)=-\gamma x$ and $\kappa(x)=1$, where then $\bar{b}(x)=-\gamma x$, $\bar{\kappa}(x)=1$ and $\zeta=1$, and (b): the CIR (or square-root) process with
$b(x)=-\gamma (x-\bar{x}/\eps)$ and $\kappa(x)=\sqrt{x}$, where then $\bar{b}(x)=-\gamma (x-\bar{x})$, $\bar{\kappa}(x)=\sqrt{x}$ and $\zeta=1/2$.
\end{example}

Then, we have the following theorem.

\begin{theorem}\label{T:LDPHeterogeneous2}
Consider the system defined in \eqref{E:main} with $\lim_{N\rightarrow\infty}\eps_{N}=0$ such that $\lim_{N\rightarrow\infty}N\eps^{2\zeta}_{N}= c\in(0,\infty)$
and let $T<\infty$. Under Assumptions \ref{A:Bounded}, \ref{A:regularity}, \ref{A:CoefficientsX}, \ref{A:CoefficientsX2} and \ref{A:CoefficientsX3} the family $\{L^{N}_{T},N\in\mathbb{N}\}$ satisfies the large deviation bounds of
Definition \ref{Def:LDP}, with speed $N$ and with rate function
\[
I(\ell)=\inf\left\{ S(\varphi,\psi): \varphi\in C\left({\Types}\times [0,T];R\right),\psi\in C\left([0,T];R\right), \bar{\varphi}(T)=\ell \right\}
\]

where
\begin{equation*} \label{E:exLDP}
\begin{aligned}
S(\varphi,\psi)&=
\begin{cases}
\int_{{\Types}}g\left(\varphi({\pp}),f^{{\pp}}_{\bar{\varphi},\psi}(t)\right)U(d{\pp})+\frac{1}{c}J_{X}(\psi), &\text{if $\varphi\in AC\left({\Types}\times [0,T];\R\right),\psi\in AC\left([0,T];\R\right), \psi(0)=0,$}\\
 &\text{$\varphi({\pp},0)=0, \varphi\geq 0,\bar{\varphi}(s)=\int_{{\Types}}\varphi({\pp},s)U(d{\pp})$}\\
\infty, &\text{otherwise}
\end{cases}
\end{aligned}\end{equation*}

Here, $J_{X}(\psi)$ is the rate function for the process $\{{X}^{N}, N<\infty\}$, as defined by \eqref{Eq:RateFunctionXprocesses}.
$I(\ell)$ has compact level sets.
\end{theorem}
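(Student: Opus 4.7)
The plan is to reduce the problem to a joint large deviations statement for the pair $(\bnu^N,\bar X^N)$ and then apply the contraction principle. By Lemma \ref{L:LDP_X} and Assumption \ref{A:CoefficientsX3}, $X^N=\eps_N X$ may be replaced, for the purposes of large deviations at speed $1/\eps_N^{2\zeta}$, by the solution $\bar X^N$ of \eqref{Eq:EquivalentAlternativeXprocess}, whose action functional on $C([0,T];\R)$ is $J_X$. Because $N\eps_N^{2\zeta}\to c\in(0,\infty)$, the $X$-rate measured at the macroscopic speed $N$ becomes $\tfrac{1}{c}J_X(\psi)$; this is the mechanism by which the systematic LDP enters the final answer.

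Conditional on the trajectory of $\bar X^N$, I would rerun the proof of Theorem \ref{T:LDPHeterogeneous1}. Since $V$ is independent of the idiosyncratic drivers $W^n$ and the exponential clocks $\ee_n$, on the event $\{\bar X^N\approx\psi\}$ each intensity $\lambda^\NN$ effectively satisfies \eqref{E:DuffiePanSingletonequation} with $\psi$ in place of the zero driver. The argument behind Theorem \ref{T:LDPHeterogeneous1} then carries over with $\mu^{\pp}_{\bar\nu,0}$ replaced throughout by $\mu^{\pp}_{\bar\nu,\psi}$, yielding the conditional rate
\begin{equation*}
\bar H_\psi(\nu)=\int_{\Types} H\!\left(\frac{\partial\nu}{\partial U}(\pp),\,\mu^{\pp}_{\bar\nu,\psi}\right)U(d\pp),
\end{equation*}
or, in the $(\varphi,\psi)$-coordinates of Corollary \ref{C:LDPHeterogeneous2}, $\int_{\Types}g(\varphi(\pp),f^{\pp}_{\bar\varphi,\psi})\,U(d\pp)$.

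Combining the conditional LDP for $\bnu^N$ with the LDP for $\bar X^N$ at compatible speed, using independence of $V$ from $(W^n,\ee_n)_n$, produces a joint LDP at speed $N$ for $(\bnu^N,\bar X^N)$ with rate
\begin{equation*}
S(\nu,\psi)=\bar H_\psi(\nu)+\tfrac{1}{c}J_X(\psi).
\end{equation*}
Since $L^N_T=\bnu^N(\Types\times[0,T])$ is a continuous function of $\bnu^N$, the contraction principle yields $I(\ell)=\inf\{S(\nu,\psi):\nu(\Types\times[0,T])=\ell\}$. Recasting this infimum in the $(\varphi,\psi)$ variables, exactly as in the passage from Theorem \ref{T:LDPHeterogeneous1} to Corollary \ref{C:LDPHeterogeneous2}, delivers the formula stated in the theorem, and compactness of level sets of $I$ is inherited from joint compactness of level sets of $S$ together with continuity of the contracting map.

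The main obstacle is making the conditioning-and-combining step rigorous, which requires uniformity in $\psi$. Specifically, one needs to show that $\psi\mapsto\mu^{\pp}_{\bar\nu,\psi}$, and consequently $\psi\mapsto\bar H_\psi(\nu)$, depends continuously on $\psi$ in the uniform topology, uniformly over compact level sets $\{J_X\le K\}$. This reduces to stability of \eqref{E:DuffiePanSingletonequation} with respect to the driver $\psi$, which can be obtained via Yamada--Watanabe type estimates exploiting the linearity of the $dX$-coefficient together with Assumption \ref{A:Bounded}. With this uniform continuity in hand, the joint LDP follows either by the standard combination of independent LDPs at compatible speeds, or equivalently by a Varadhan-type transfer of the sort used in the independent-to-dependent step behind Theorem \ref{T:LDPHeterogeneous1}, with the additive structure of $S$ arising from the asymptotic factorization of the joint laws.
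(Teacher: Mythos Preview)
Your proposal is correct and follows essentially the same route as the paper: a conditional LDP for $\bnu^N$ given the systematic path (the paper's Lemma \ref{L:LDP1}), combined with the small-noise LDP for $X^N$ (Lemma \ref{L:LDP_X}) at the matched speed $N\eps_N^{2\zeta}\to c$, then contraction to $L^N_T$. The paper packages the combination step via the local LDP plus exponential tightness criterion of Dembo--Zeitouni (their Lemma 1.2.18 and Theorem 4.1.11) and the factorization $\BP(A\cap\Gamma)=\BP(A\mid\Gamma)\BP(\Gamma)$, but is otherwise only a sketch; your explicit identification of the uniformity-in-$\psi$ issue is a point the paper glosses over.
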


The definition of $S(\varphi,\psi)$ suggests that if $c=\lim_{N\rightarrow\infty}N\eps^{2\zeta}_{N}=\infty$ then the $\int_{{\Types}}g\left(\varphi({\pp}),f^{{\pp}}_{\bar{\varphi},\psi}(t)\right)U(d{\pp})$
integral in $S(\varphi,\psi)$ will be the dominant factor, whereas if $c=\lim_{N\rightarrow\infty}N\eps^{2\zeta}_{N}=0$, then the $J_{X}(\psi)$
entropy term will be the dominant factor. This will become clearer in Section \ref{SS:LDPDependentHeterogeneous2}. Hence both effects are preesent if
$c=\lim_{N\rightarrow\infty}N\eps^{2\zeta}_{N}\in (0,\infty)$ and this is the case that we focus on in this paper. However, in the course of the proof of Theorem
\ref{T:LDPHeterogeneous2} we prove Lemma \ref{L:LDP1}, which is  the large deviations principle for
$\{L^{N}_{t}, N\in\N, t\in[0,T]\}$ conditional a given path of the process $t\mapsto X_{t}$ in $C_{c}([0,T;\R])$, when $\eps_{N}=1$.

\section{Numerical exploration of the large deviations principle}\label{S:Numerics}
In this section we illustrate our theoretical results by some numerical computations. In particular, in Subsection \ref{SS:NumericsHomogeneous} we investigate how the rate function, the tails of the probability loss distribution and the extremals behave in specific situations for a homogeneous pool. We compare two different portfolios and qualitatively compare the two cases. Then, in Subsection \ref{SS:NumericsHeterogeneous} we perform some numerical experiments for a heterogeneous pool composed of two types, where the difference in the two types is in the level of influence of contagion and systematic effects.

Understanding  the most likely ways in which contagion and systematic risk combine to lead to large default rates, gives useful insights into how to optimally hedge against such events. In particular, the numerical experiments show that if a large cluster were to occur, the effect of the systematic factor would be more significant in the initial phase, but then its importance decreases and contagion effects become more important. Moreover, in the case of heterogeneous pools, the large deviations analysis allows us to make statements about which types of names in the pool are likely to be affected more and in which order.

In the numerical results that follow we have computed numerically the rate function $I(\ell)$ and the corresponding extremals $\varphi$ and $\psi$ solving the variational problems of Theorem \ref{T:LDPHeterogeneous2} for the homogeneous and heterogeneous cases of interest. In the numerical computation of the rate function, one needs to compute $f_{\varphi,\psi}^{\pp}(t)$ and this is done with the help of Lemma \ref{L:DensityFcn}.
\subsection{Numerics for homogeneous pool}\label{SS:NumericsHomogeneous}
Let us now understand how our calculations look like in some specific cases in the case of homogeneous portfolios.  We consider the systematic risk
$X_{t}$ to be of Ornstein-Uhlenbeck type and in particular
\begin{equation}\begin{aligned}
dX_t&=-\gamma X_tdt+dV_t\\
X_0&=0\end{aligned}\label{Eq:SpecificExogeneousRisk}
\end{equation}

For the numerical experiments below we have taken $\eps_{N}=\frac{1}{\sqrt{N}}$. The LDP is given by Theorem \ref{T:LDPHeterogeneous2} with $c=1$.
The main difficulty in evaluating $I(\ell)$ and the extremals is the computation of $\mu_{\varphi,\psi}^{\pp}$ of \eqref{E:muDef}.  The virtue of the  CIR-based
evolution of $\lambda^\NN$ of \eqref{E:main} is that fairly explicit formulae are available.  For $t\in[0,T]$, let $\theta_t^{\pp}$ solve
\begin{equation} \label{E:DuffiePanSingleton2}
 \theta_t^{\pp}(s) = \int_{0}^s\left(1-\frac{1}{2}\sigma^2 \left(\theta_t^{\pp}(r)\right)^2-\alpha\theta_t^{\pp}(r)\right)dr +\beta^S\int_{0}^s \theta_t^{\pp}(t-r)d\psi(r). \qquad s\in [0,t]
\end{equation}
Then define
\begin{equation*}
\Gamma_{\varphi,\psi}^{\pp}(t) = \theta_t^{\pp}(t)\lambda_\circ + \alpha\bar\lambda  \int_{0}^t \theta_t^{\pp}(r)dr+ \beta^C \int_{0}^t \theta_t^{\pp}(t-r)d \varphi(r) \end{equation*}
\begin{lemma} \label{L:DensityFcn} We have that
\begin{equation*}f_{\varphi,\psi}^{\pp}(t) =\dot \Gamma_{\varphi,\psi}^{\pp}(t)\exp\left[-\Gamma_{\varphi,\psi}^{\pp}(t)\right]. \end{equation*}
\end{lemma}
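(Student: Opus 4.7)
The plan is to identify the Laplace transform $\BE\left[\exp\left(-\int_0^t \lambda^{\varphi,\psi}_u(\pp)\,du\right)\right]$ in closed form via the affine structure of $\lambda^{\varphi,\psi}$ and then differentiate in $t$. Conditional on the deterministic paths $\varphi,\psi\in AC([0,T];\R)$, the SDE \eqref{E:DuffiePanSingletonequation} is a CIR-type affine diffusion with time-dependent deterministic coefficients, so the classical Duffie-Pan-Singleton machinery applies.

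First I would fix $t\in(0,T]$ and seek deterministic $A(\cdot;t),B(\cdot;t)\in C([0,t];\R)$ with terminal data $A(t;t)=B(t;t)=0$ such that
$M_s:=\exp\bigl(-\int_0^s \lambda^{\varphi,\psi}_u(\pp)\,du-A(s;t)-B(s;t)\lambda^{\varphi,\psi}_s(\pp)\bigr)$
is a local martingale on $[0,t]$. Applying It\^o's formula to $M$ and demanding the drift vanish, the coefficients of $1$ and $\lambda_s$ produce the Riccati system
$\dot B(s;t)=-1+\alpha B(s;t)+\frac{1}{2}\sigma^2 B(s;t)^2-\beta^S B(s;t)\dot\psi(s)$ and
$\dot A(s;t)=-\alpha\bar\lambda B(s;t)-\beta^C B(s;t)\dot\varphi(s),$
interpreted in integrated form since $\varphi,\psi$ are only AC. The true martingale property of $M$ (not merely local) follows from the bound $M_s\le e^{\sup_r|A(r;t)|}$, which uses $\lambda^{\varphi,\psi}\ge 0$ (Proposition 3.3 of \cite{GSS2013}) together with uniform bounds on $B$ coming from Assumption \ref{A:Bounded} and the integrability of $\dot\psi$. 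Taking expectations at $s=t$ versus $s=0$ yields $\BE\left[\exp\left(-\int_0^t \lambda^{\varphi,\psi}_u(\pp)\,du\right)\right]=\exp\bigl(-A(0;t)-B(0;t)\lambda_\circ\bigr).$

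Next I would identify this exponent with $\Gamma_{\varphi,\psi}^{\pp}(t)$ as defined just above Lemma \ref{L:DensityFcn}. Setting $\theta_t^{\pp}(s):=B(t-s;t)$ time-reverses the backward Riccati into the Volterra-type equation \eqref{E:DuffiePanSingleton2}, and the change of variables $u=t-s$ rewrites $A(0;t)+B(0;t)\lambda_\circ$ precisely as $\theta_t^{\pp}(t)\lambda_\circ+\alpha\bar\lambda\int_0^t\theta_t^{\pp}(r)\,dr+\beta^C\int_0^t\theta_t^{\pp}(t-r)\,d\varphi(r)=\Gamma_{\varphi,\psi}^{\pp}(t)$. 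Finally, since $0\le e^{-\int_0^t\lambda_u du}\le 1$ and $\BE[\lambda_t]$ is locally bounded, dominated convergence permits differentiating under the expectation:
$-f_{\varphi,\psi}^{\pp}(t)=\frac{d}{dt}\BE\!\left[\exp\left(-\!\int_0^t \lambda^{\varphi,\psi}_u(\pp)\,du\right)\right]=-\dot\Gamma_{\varphi,\psi}^{\pp}(t)\exp[-\Gamma_{\varphi,\psi}^{\pp}(t)],$
which is the stated identity.

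The main obstacle I anticipate is the identification step: the $\beta^S$-term makes the time-reversed backward Riccati match the Volterra form \eqref{E:DuffiePanSingleton2}, and one must carefully track the interplay between $\theta_t^{\pp}(s)$ and $\theta_t^{\pp}(t-s)$ under the change of variables and under integration against the signed measure $d\psi$. A secondary technical point is ensuring that the It\^o argument and the Riccati are well-posed when $\dot\varphi,\dot\psi\in L^1$ rather than continuous, which is handled by using the integrated formulations and standard existence/uniqueness for Volterra equations with $L^1$ coefficients.
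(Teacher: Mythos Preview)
Your proposal is correct and follows essentially the same affine--martingale approach as the paper. The paper skips the intermediate step of introducing backward Riccati coefficients $(A,B)$ and instead defines the exponential martingale $M_s$ directly in terms of $\theta_t^{\pp}(t-s)$, then checks via It\^o that the drift vanishes by the integral equation \eqref{E:DuffiePanSingleton2}; your time-reversal $\theta_t^{\pp}(s)=B(t-s;t)$ is exactly the translation between the two presentations, and the obstacle you flag about matching the $\beta^S$-term under this change of variables is the only place where any real care is needed.
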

\begin{proof}
Define
\begin{equation*}
M_s \Def \exp\left[-\theta^{\pp}_t(t-s)\lambda^{\varphi,\psi}_s- \alpha\bar\lambda\int_{s}^t \theta^{\pp}_t(t-r)dr -\beta^C\int_{s}^t   \theta^{\pp}_t(t-r)d\varphi(r)-\int_{0}^s \lambda^{\varphi,\psi}_r dr\right] \end{equation*}
for $s\in [0,t]$.
Note that $\theta^{\pp}_t(0)=0$.  In differential form,
\begin{align*}
dM_s&= \lb \dot \theta^{\pp}_t(t-s)\lambda^{\varphi,\psi}_s - \theta^{\pp}_t(t-s)\lb -\alpha(\lambda^{\varphi,\psi}_s-\bar \lambda) + \beta^C \dot \varphi(s)+\beta^S \dot \psi(s)\lambda^{\varphi,\psi}_s\rb - \lambda^{\varphi,\psi}_s\right.\\
&\qquad \left. + \frac12 (\theta^{\pp}_t(t-s))^2\sigma^2 \lambda^{\varphi,\psi}_s+ \theta^{\pp}_t(t-s)\lb \alpha\bar \lambda + \beta^C \dot \varphi(s)\rb\rb  M_s ds + d\mart_s
 \end{align*}
where $\mart$ is a martingale.  The ODE \eqref{E:DuffiePanSingleton2} implies that the $ds$ term is identically zero, so $M$ is a martingale.
Noting that
\begin{equation*} M_0 = \exp\left[-\Gamma_{\phi,\psi}(t)\right] \qquad \text{and}\qquad
M_t = \exp\left[-\int_{0}^t \lambda^{\varphi,\psi}_r dr\right], \end{equation*}
we get that
\begin{equation*} e^{-\Gamma_{\psi,\psi}(t)} = M_0 = \BE[M_t] = \BE\left[\exp\left[-\int_{0}^t \lambda^{\varphi,\psi}_r dr\right]\right].\end{equation*}
Comparing this with \eqref{E:fdens} and differentiating, the claim follows.
\end{proof}

We consider two test portfolios; see Table \ref{parameters}. For each test portfolio, we compare four different cases,
(a) Independence: $\beta^S=\beta^C=0$, (b) Contagion only: $\beta^S=0, \beta^C\not=0$, (c) Systematic risk only: $\beta^S\not=0, \beta^C=0$,
and (d) Systematic risk and contagion: $\beta^S\not=0, \beta^C\not=0$. In each case, the time horizon is $T=1$.

\begin{table}[ht!]
\centering
\begin{tabular}{|l|c|c|c|c|c|c|c|c|}
\hline
 & $N$ & $\alpha$ & $\bar\lambda$ & $\sigma$ & $\lambda_{0}$ & $\gamma$ & $\beta^S$ & $\beta^C$  \\ \hline\hline
Portfolio I & 200 & 1 & 1 & 0.9 & 0.5 & 1 & 10 & 3 \\ \hline
Portfolio II & 200 & 5 & 1 & 1 & 0.5 & 0.1 & 28 & 1 \\ \hline
\end{tabular}
\vspace{0.5cm}
\caption{\label{parameters} Model parameter values for two test portfolios.}
\end{table}

 In Table  \ref{ratefcn1} we report the losses in the pool at time $T=1$ for the two portfolios, both in the case that there is a contagion effect
(i.e. $\beta^{C}\neq0$ and taking the values of Table \ref{parameters}) and in the case that there is no contagion effect (i.e., setting $\beta^{C}=0$).
\begin{table}[ht!]
\centering
\begin{tabular}{|l||c|c|}
\hline
 &  $L_{c}(1)$ & $L_{nc}(1)$ \\ \hline\hline
Portfolio I  & 0.804 & 0.470\\ \hline
Portfolio II  & 0.650 & 0.589\\ \hline
\end{tabular}
\vspace{0.5cm}
\caption{\label{ratefcn1} Typical default rate $\bar{\ell}=L_{c}(T)$ at $T=1$ when contagion is present, i.e.,
$\beta^{C}\neq 0$ and $\bar{\ell}=L_{nc}(T)$ at $T=1$ when contagion is not present, i.e., $\beta^{C}= 0$. }
\end{table}

In Figure \ref{fig:rate-functions} we see a comparison of all rate functions $I$. Note that in both cases $\bar{\ell}$ satisfies $I(\bar{\ell})=0$, where the law of
 large numbers is
numerically given by Table \ref{ratefcn1}. Of course, this is expected and is in accordance with large deviations theory.
\begin{figure}[ht!]
\begin{minipage}{8.1cm}
\includegraphics[height=8.2cm, width=6cm, angle=270]{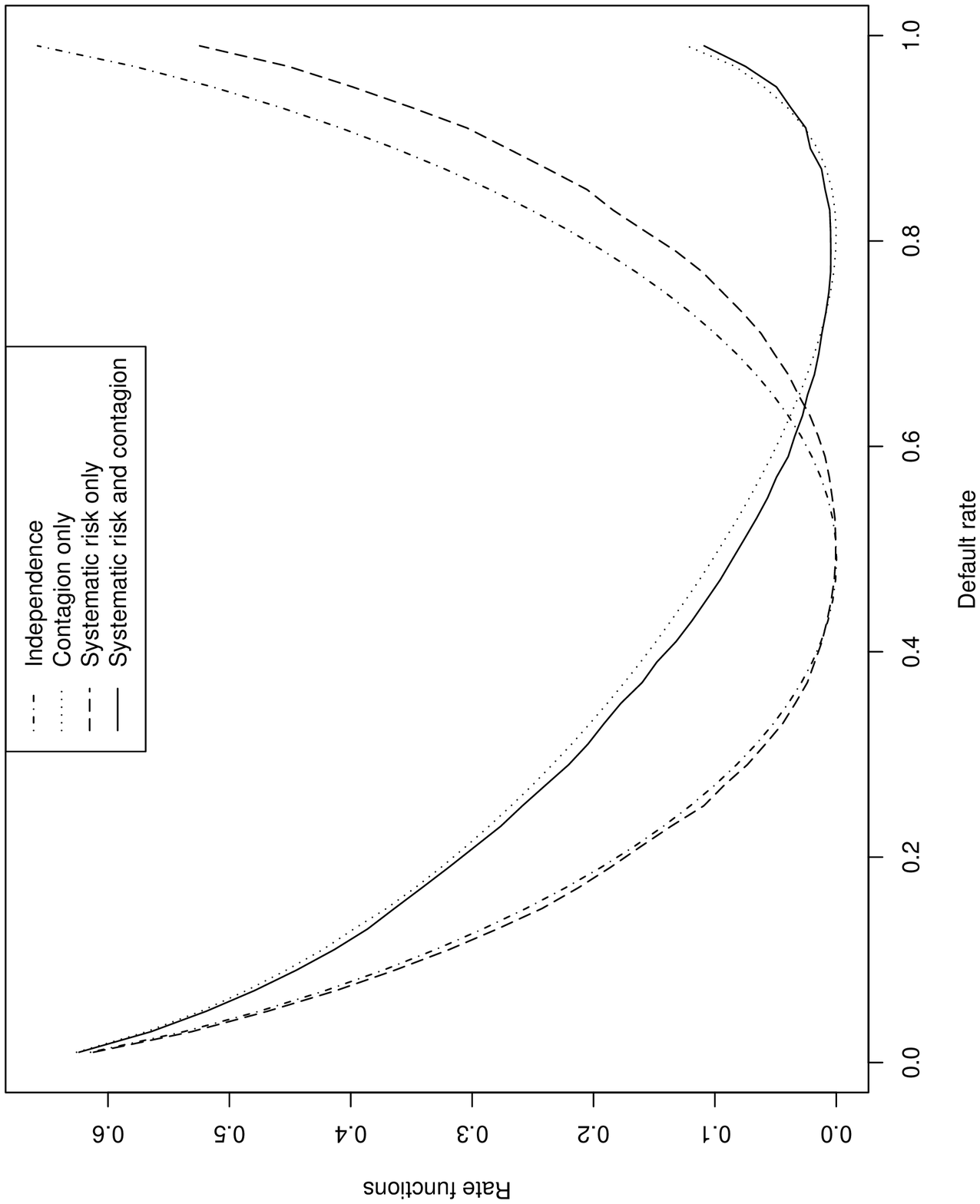}
\end{minipage}
\begin{minipage}{8.1cm}
\includegraphics[height=8.2cm, width=6cm, angle=270]{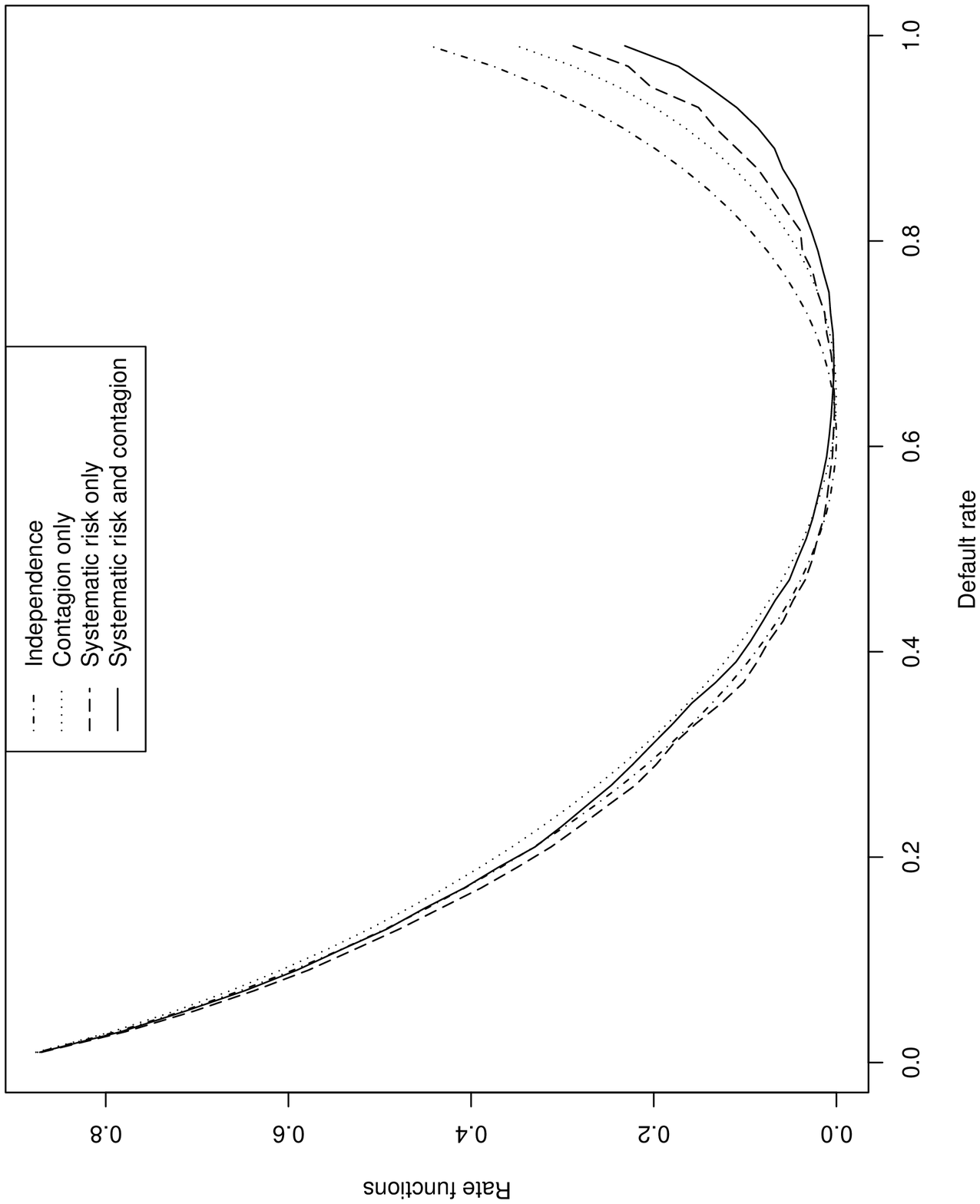}
\end{minipage}
\caption{\label{fig:rate-functions} The rate function $I'(\ell)$. \emph{Left panel}: Portfolio I. \emph{Right panel}: Portfolio II.}
\end{figure}

In Figure \ref{fig:tails} we see a large deviations approximation to the tail of $L^N_T$.
That is, we plot  $\BP\{L^N_T\approx \ell\} \asymp \exp\left[-N I(\ell)\right]$ where $\ell>\bar{\ell}=L_{T}$. The exposure to contagion and the systematic risk has significant
 implications for the tail of $L^N_T$. Moreover, we see here that, in terms of the tail of the distribution, in Portfolio I,
the effect of contagion dominates the effect of systematic risk, whereas, in Portfolio II, the effect of systematic risk dominates the effect of contagion. This is mainly due to the
differences in the values of $\beta^{S}$ and $\beta^{C}$ in the two portfolios, Table \ref{parameters}.
\begin{figure}[ht!]
\begin{minipage}{8.1cm}
\includegraphics[height=8.2cm, width=6cm, angle=270]{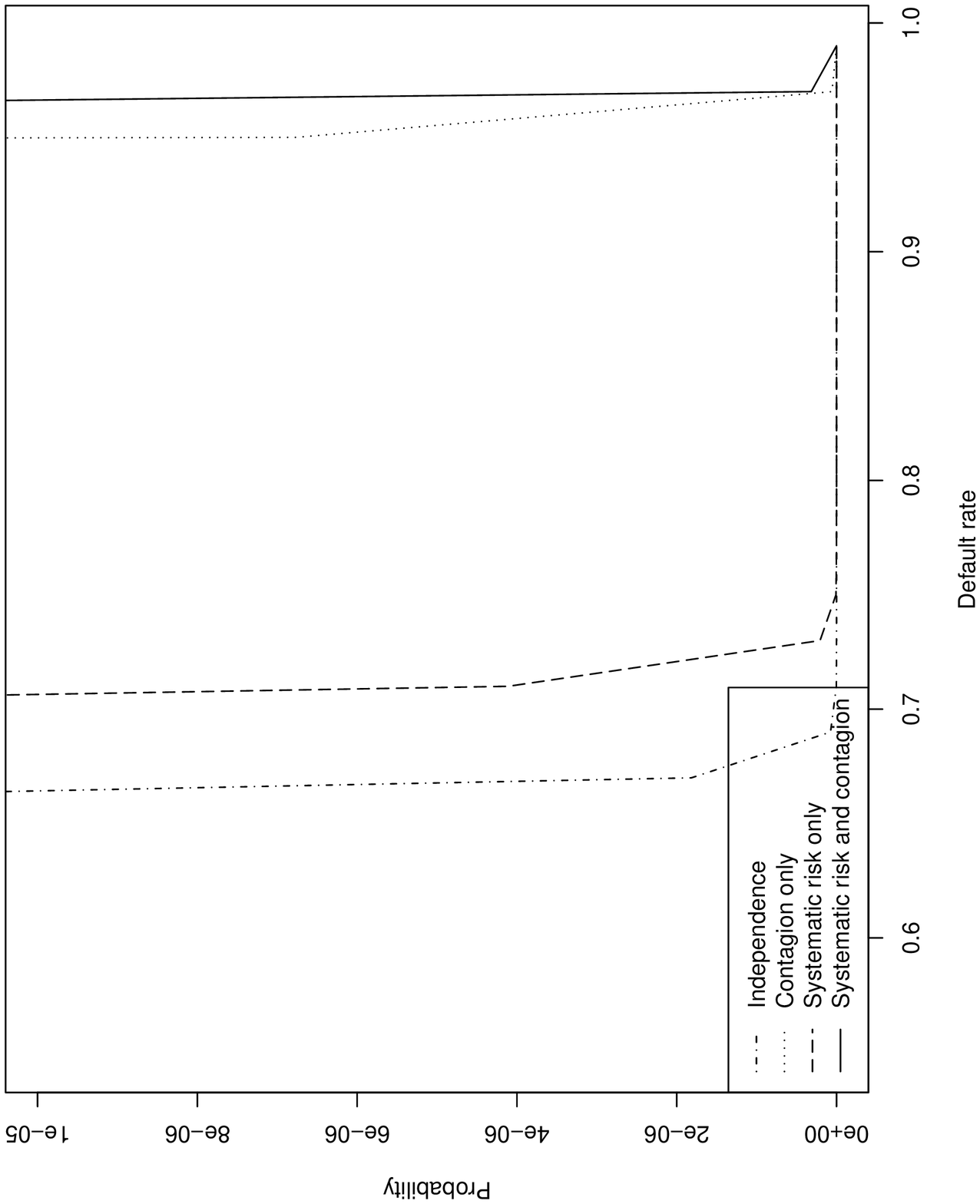}
\end{minipage}
\begin{minipage}{8.1cm}
\includegraphics[height=8.2cm, width=6cm, angle=270]{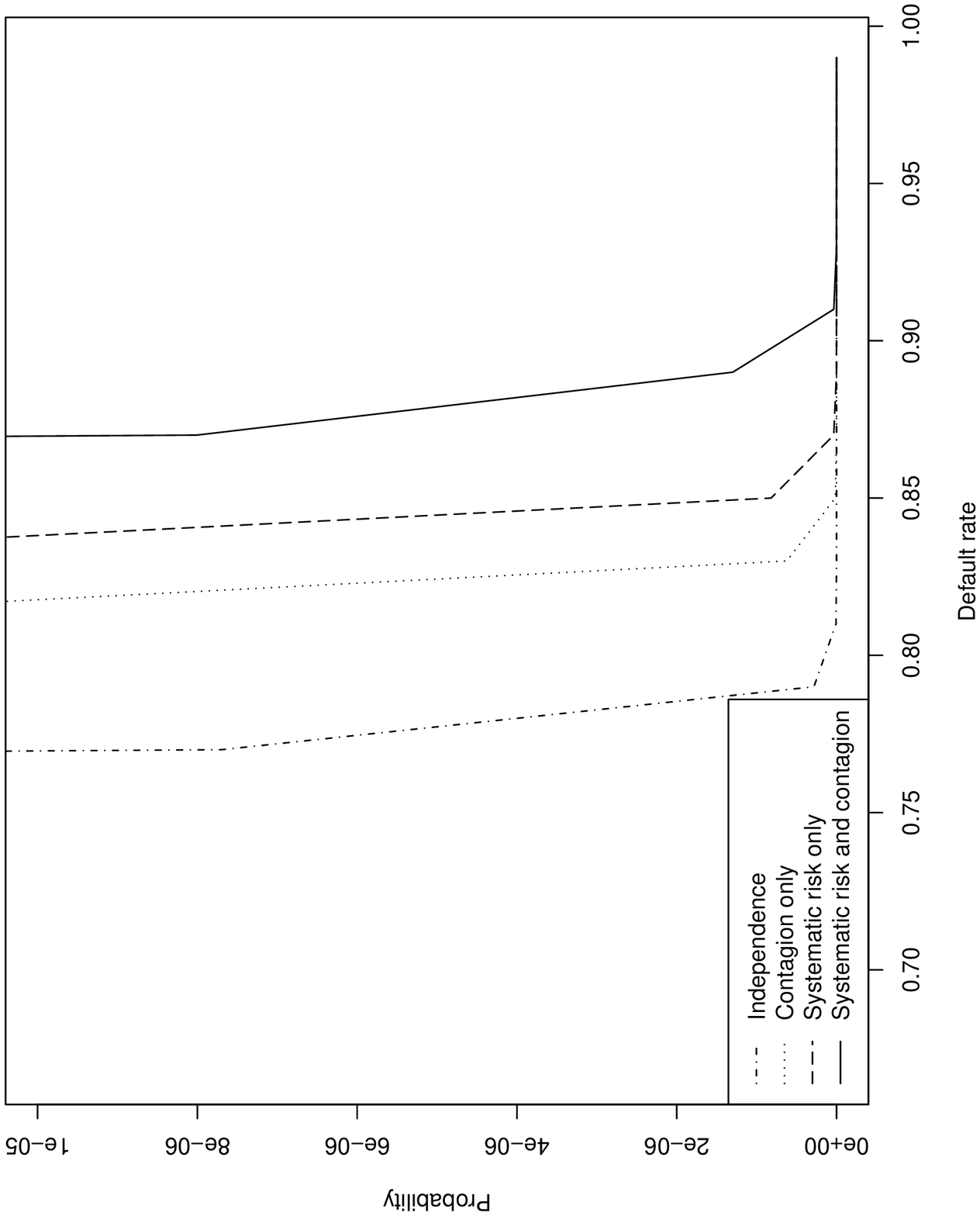}
\end{minipage}
\caption{\label{fig:tails} Large deviations approximation to the tail of the default rate $L^N_T$. \emph{Left panel}: Portfolio I. \emph{Right panel}: Portfolio II.}
\end{figure}

In Figures \ref{fig:phi} and \ref{fig:psi} we see a comparison of the optimal $\varphi$ and $\psi$ for a level $\ell=0.85$. The effect of contagion and of the systematic risk alter the behavior of the extremals (the most likely path to failure). Comparing the tails of the loss distributions of portfolios I and II in Figure  \ref{fig:tails}, we conclude that in Portfolio I, the effect of contagion is in general more profound than the effect of systematic risk. On the other hand in the case of Portfolio II, the effect of systematic risk is more profound than the effect of contagion. Moreover, in the left panel of Figure \ref{fig:psi}, we see that if a large default cluster occurs, the systematic risk is most likely to play a large role in the initial phase, but then its importance decreases (and thus the contagion effect becomes more important).
\begin{figure}[ht!]
\begin{minipage}{8.1cm}
\includegraphics[height=8.2cm, width=5cm, angle=270]{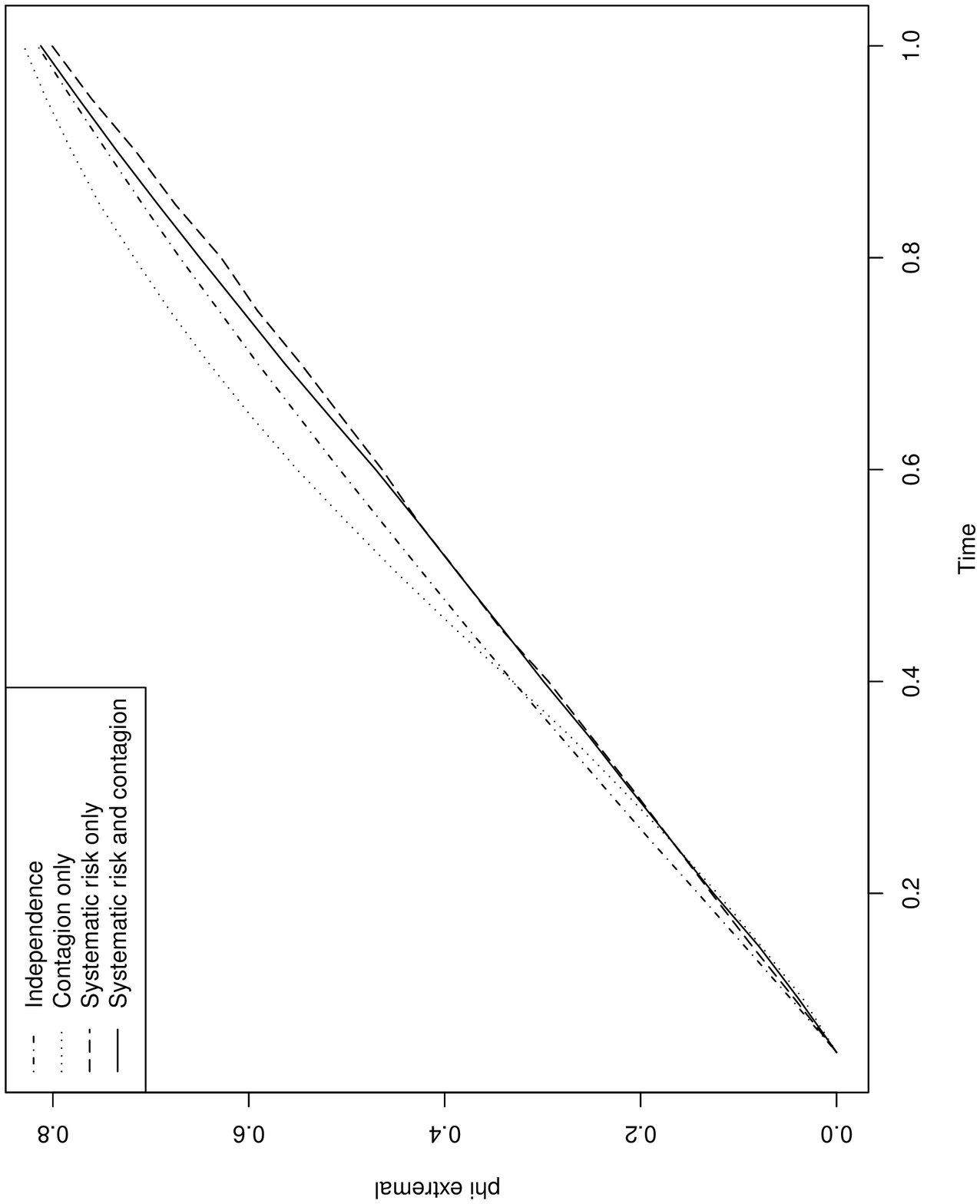}
\end{minipage}
\begin{minipage}{8.1cm}
\includegraphics[height=8.2cm, width=5cm, angle=270]{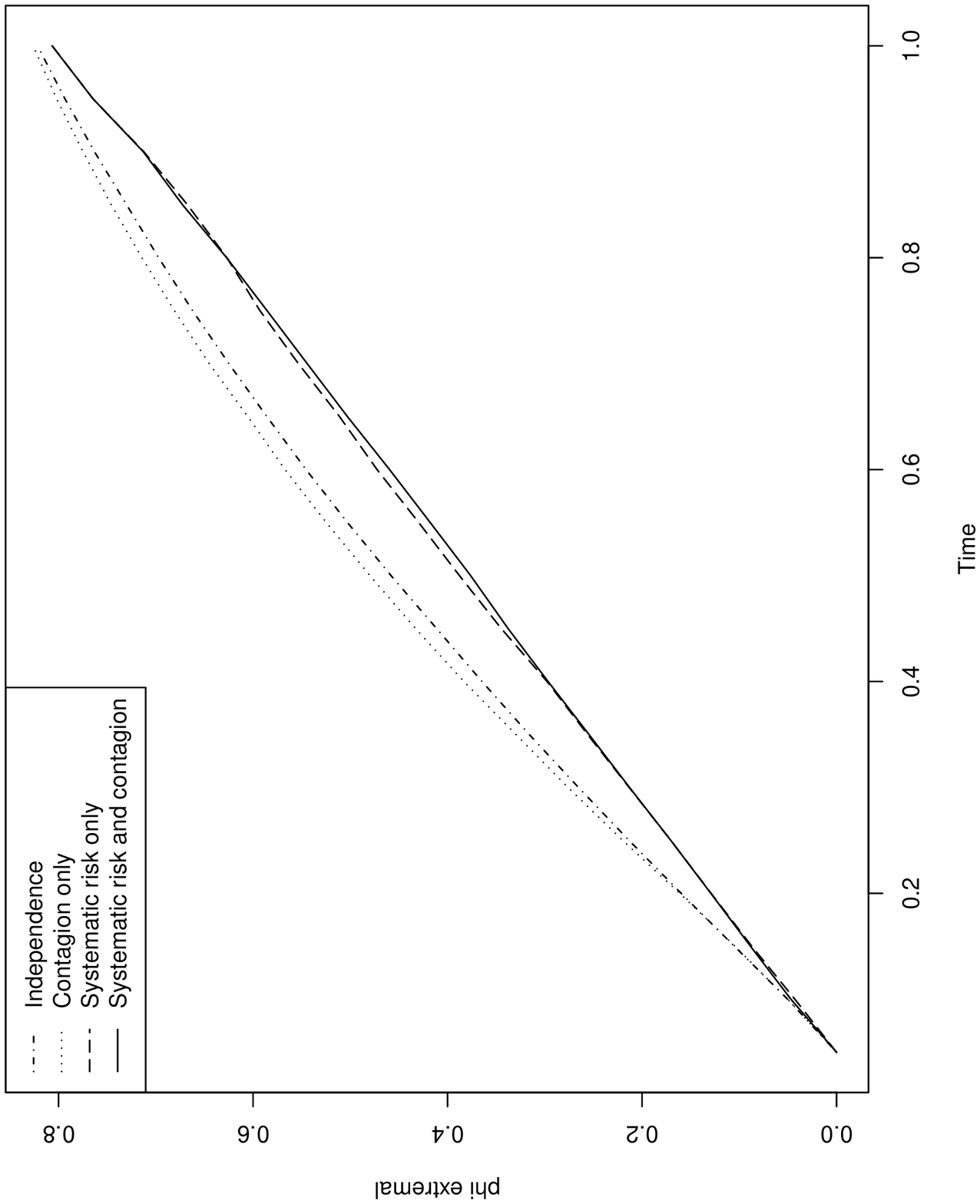}
\end{minipage}
\caption{\label{fig:phi} Optimal $\phi(t)$ for $t\in[0,1]$ and $\ell=0.85$. \emph{Left panel}: Portfolio I. \emph{Right panel}: Portfolio II.}
\end{figure}
\begin{figure}[ht!]
\begin{minipage}{8.1cm}
\includegraphics[height=8.2cm, width=5cm, angle=270]{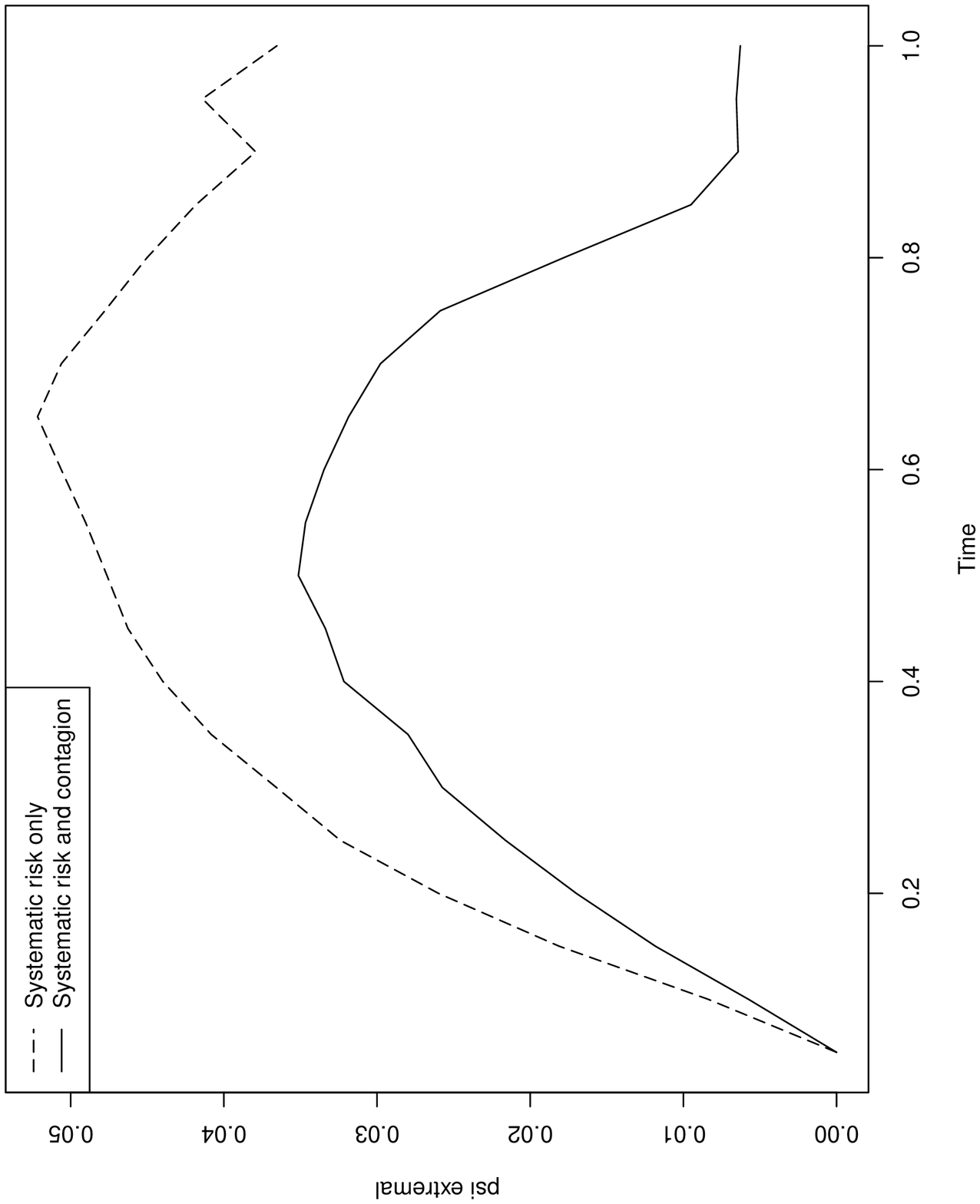}
\end{minipage}
\begin{minipage}{8.1cm}
\includegraphics[height=8.2cm, width=5cm, angle=270]{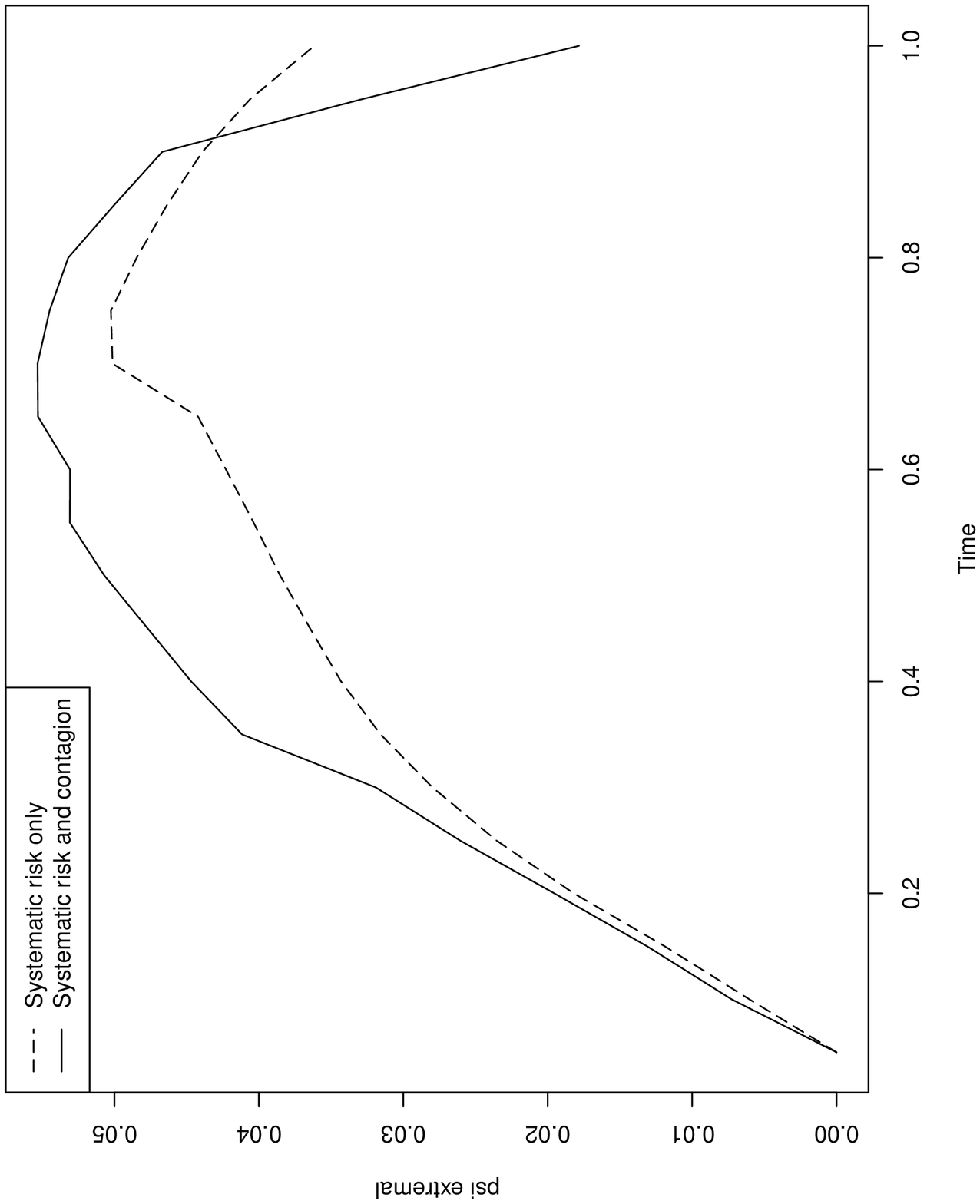}
\end{minipage}
\caption{\label{fig:psi} Optimal $\psi(t)$ for $t\in[0,1]$ and $\ell=0.85$. \emph{Left panel}: Portfolio I. \emph{Right panel}: Portfolio II.}
\end{figure}
\subsection{Numerics for heterogeneous pool}\label{SS:NumericsHeterogeneous}
In this subsection, we illustrate numerically the case of a heterogeneous portfolio, which is composed of two types.  Type A is $1/3$ of the names and type B is $2/3$ of
 the names. In order to illustrate the effect of contagion and systematic risk in such a heterogeneous portfolio, we keep all parameters the same, except for the parameters
 $(\beta^{C}_{A},\beta^{S}_{A})$ and $(\beta^{C}_{B},\beta^{S}_{B})$ for types $A$ and $B$ respectively. The systematic risk is assumed to be given by \eqref{Eq:SpecificExogeneousRisk}.

In particular, we assume that initially we have $N=200$ names in the pool and  we consider a portfolio composed of two types with the following parameters.
\begin{table}[ht!]
\centering
\begin{tabular}{|l|c|c|c|c|c|c|c|}
\hline
 &  $\alpha$ & $\bar\lambda$ & $\sigma$ & $\lambda_{0}$ & $\gamma$ & $\beta^S$ & $\beta^C$  \\ \hline\hline
Type A  &  1 & 2 & 1 & 0.5 & 1 & 5 & 10 \\ \hline
Type B  &  1 & 2 & 1 & 0.5 & 1 & 1 & 2  \\ \hline
\end{tabular}
\vspace{0.5cm}
\caption{\label{parameters2} Model parameter values for two types in the portfolios.}
\end{table}
In such a portfolio we compute that the typical loss at time $T=1$ is $0.62$ if there is no contagion and $0.81$ if there is contagion. Below, we see plots comparing the rate functions, tails of the distribution and extremals in all cases, depending on weather both contagion and systematic risk effects are present or not.

In Figure \ref{fig:rate-functionsHeter} we see a comparison of all rate functions $I$ and of the tails of the loss distribution.
\begin{figure}[ht!]
\begin{minipage}{8.1cm}
\includegraphics[height=8.2cm, width=6cm, angle=270]{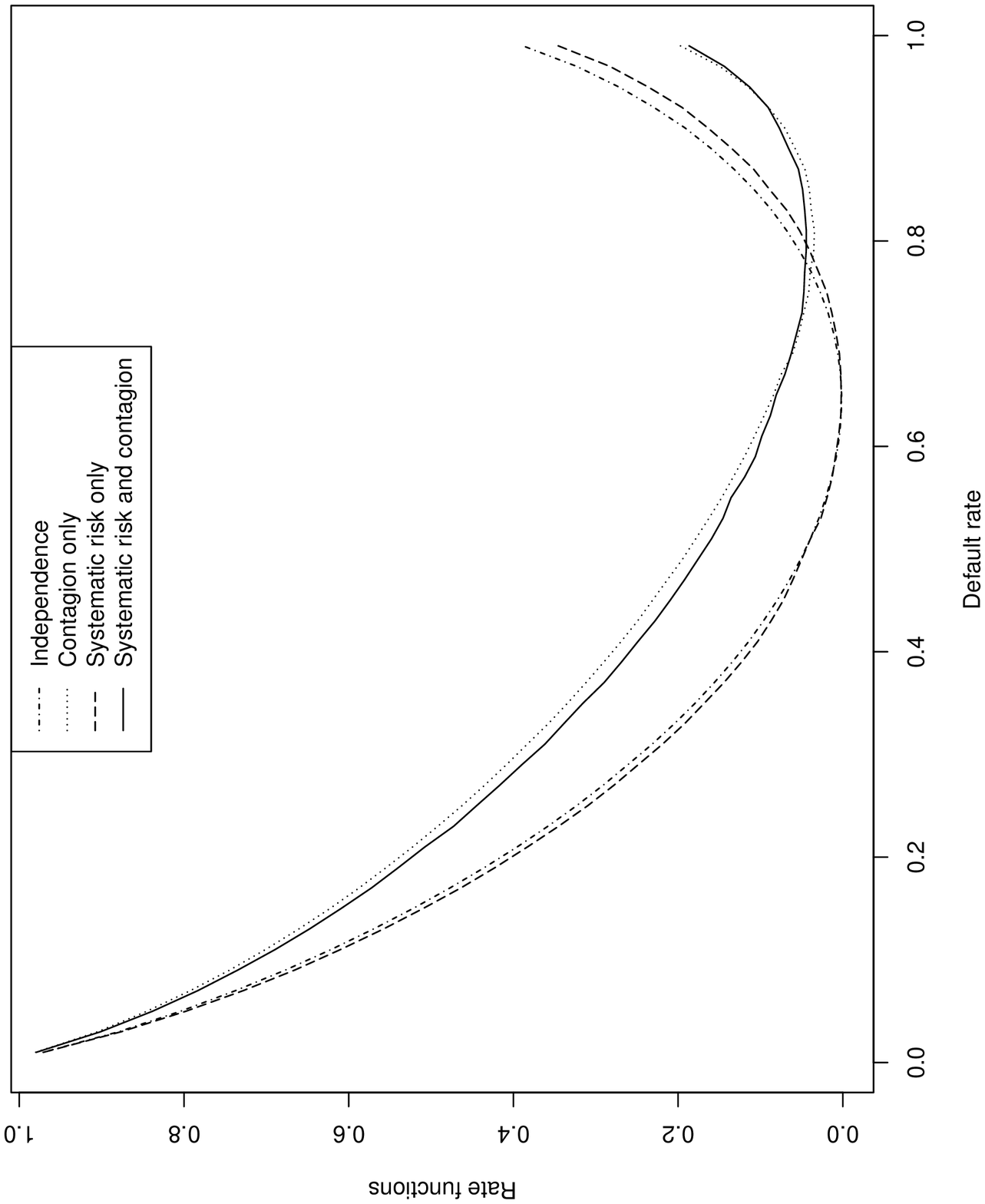}
\end{minipage}
\begin{minipage}{8.1cm}
\includegraphics[height=8.2cm, width=6cm, angle=270]{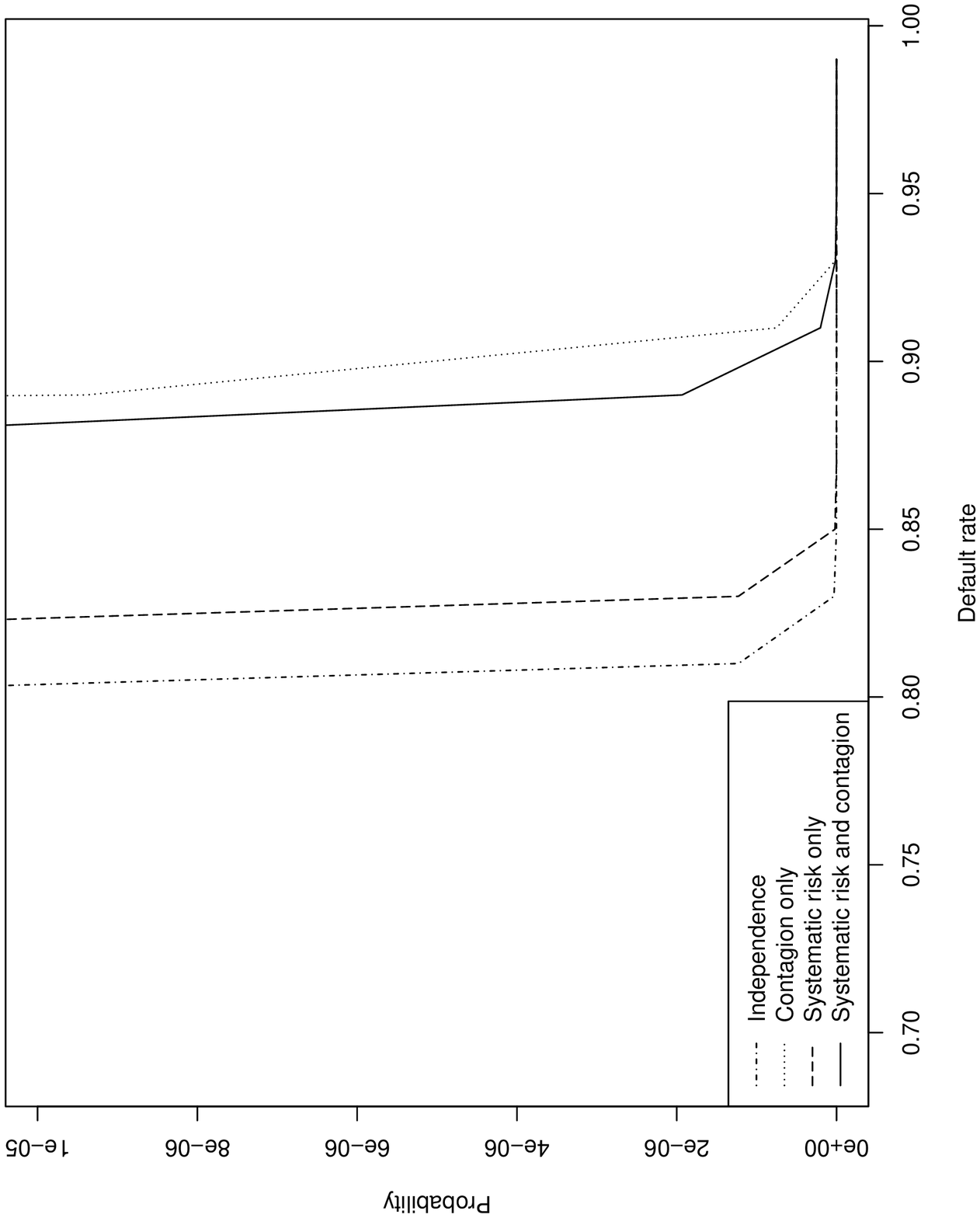}
\end{minipage}
\caption{\label{fig:rate-functionsHeter} The rate function $I(\ell)$ and the tails of the loss distribution}
\end{figure}

The effect of contagion and of the systematic risk alter the behavior of the extremals (the most likely path to failure). In Figure \ref{fig:phiHeter1} we see a comparison of the optimal $\varphi$ for a level of loss $\ell=0.85$ for types A and B under the different scenarios of contagion effects
or not and systematic risk effects or not. In order to illustrate the difference between the behavior of the most likely path to failure between types A and B, we compare the
corresponding $\varphi$ extremals in the left panel of Figure \ref{fig:phiHeter2} when both effects of contagion and systematic risk are included in the model.  We notice that at any given time $t$, the extremal for type A is bigger than the extremal for
type B. This implies that unlikely large losses for components of type A are more likely than unlikely large losses for components of type B. Thus, components of
 type A affect the pool more than components of type B even though type A composes 1/3 of the pool, whereas type B, composes 2/3 of the pool.

Then, in the right of Figure \ref{fig:phiHeter2}  we see a comparison of the optimal $\psi$ extremals for a level $\ell=0.85$ for the cases of systematic risk effects are present or not.
  We see that in the presence of contagion, if a large default cluster occurs, the systematic risk is most likely to play a large role in the initial phase, but then its importance decreases, which means that contagion effect becomes more important. So, in conclusion the analysis of the extremals shows that in such a pool if a large cluster were to occur, it would most likely be due to effects of the systematic risk factor which then affects more names of type A and less names of type B.
\begin{figure}[ht!]
\begin{minipage}{8.1cm}
\includegraphics[height=8.2cm, width=5cm, angle=270]{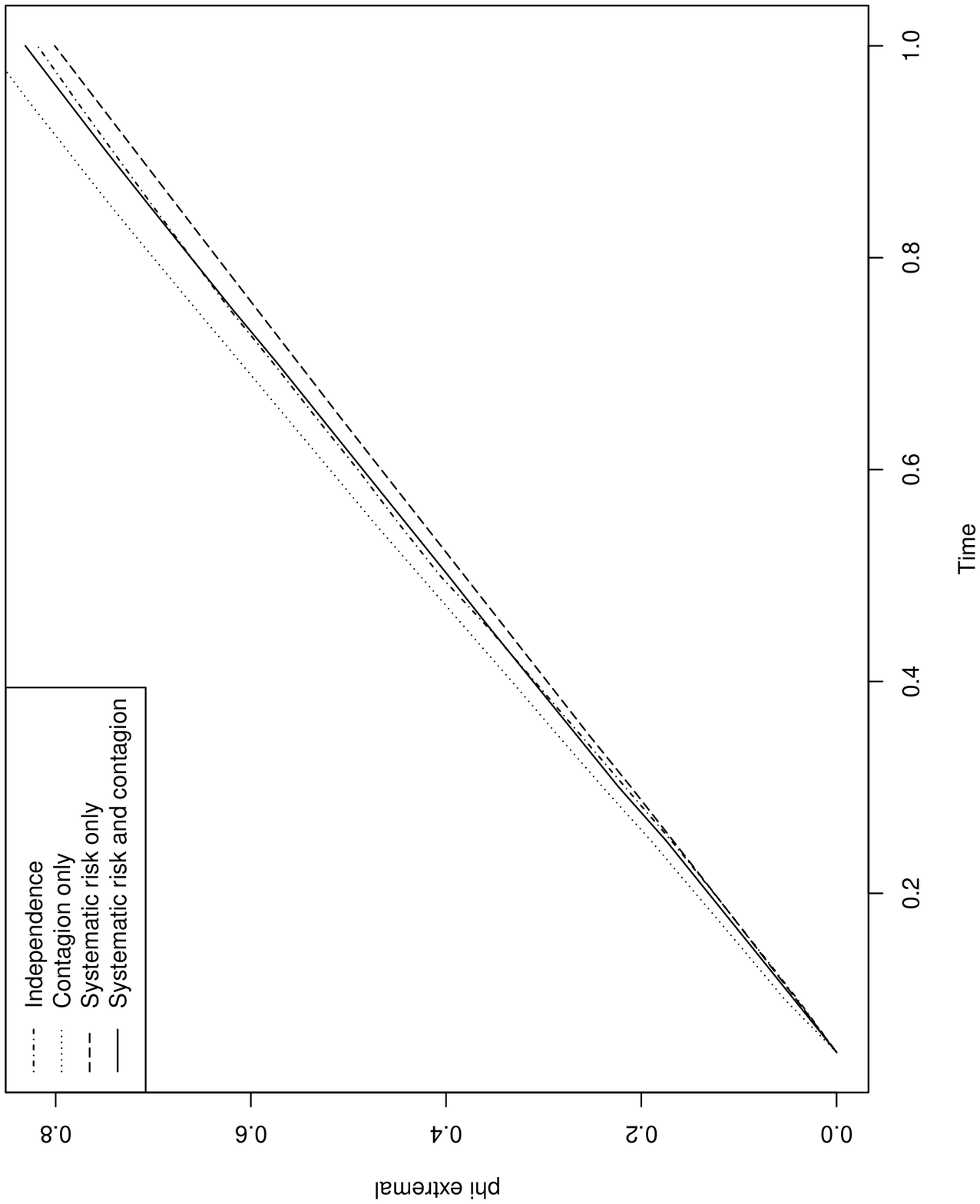}
\end{minipage}
\begin{minipage}{8.1cm}
\includegraphics[height=8.2cm, width=5cm, angle=270]{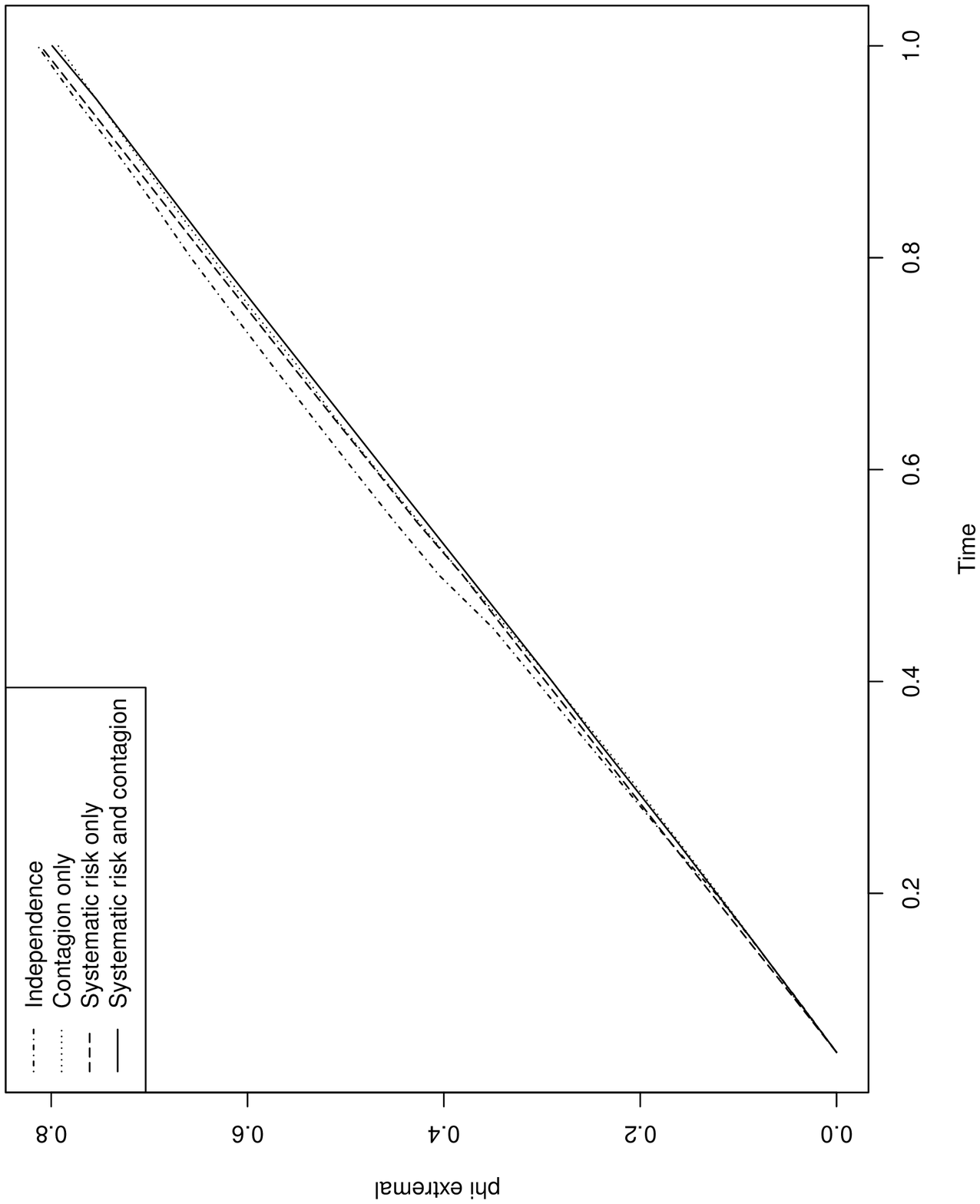}
\end{minipage}
\caption{\label{fig:phiHeter1} Optimal $\phi(t)$ for $t\in[0,1]$ and $\ell=0.85$. \emph{Left panel}: $\phi$ extremal for type A. \emph{Right panel}:  $\phi$ extremal for type B.}
\end{figure}
\begin{figure}[ht!]
\begin{minipage}{8.1cm}
\includegraphics[height=8.2cm, width=5cm, angle=270]{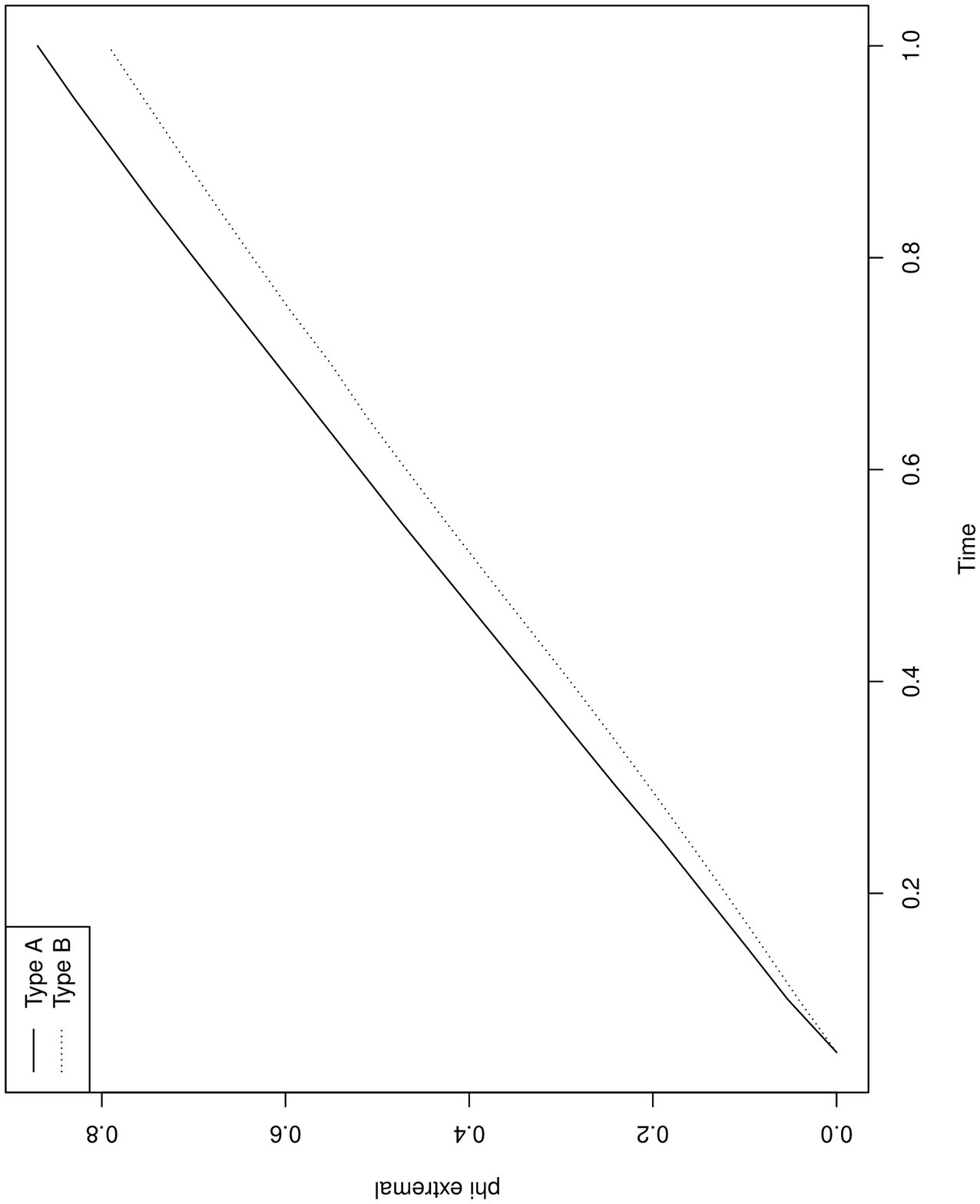}
\end{minipage}
\begin{minipage}{8.1cm}
\includegraphics[height=8.2cm, width=5cm, angle=270]{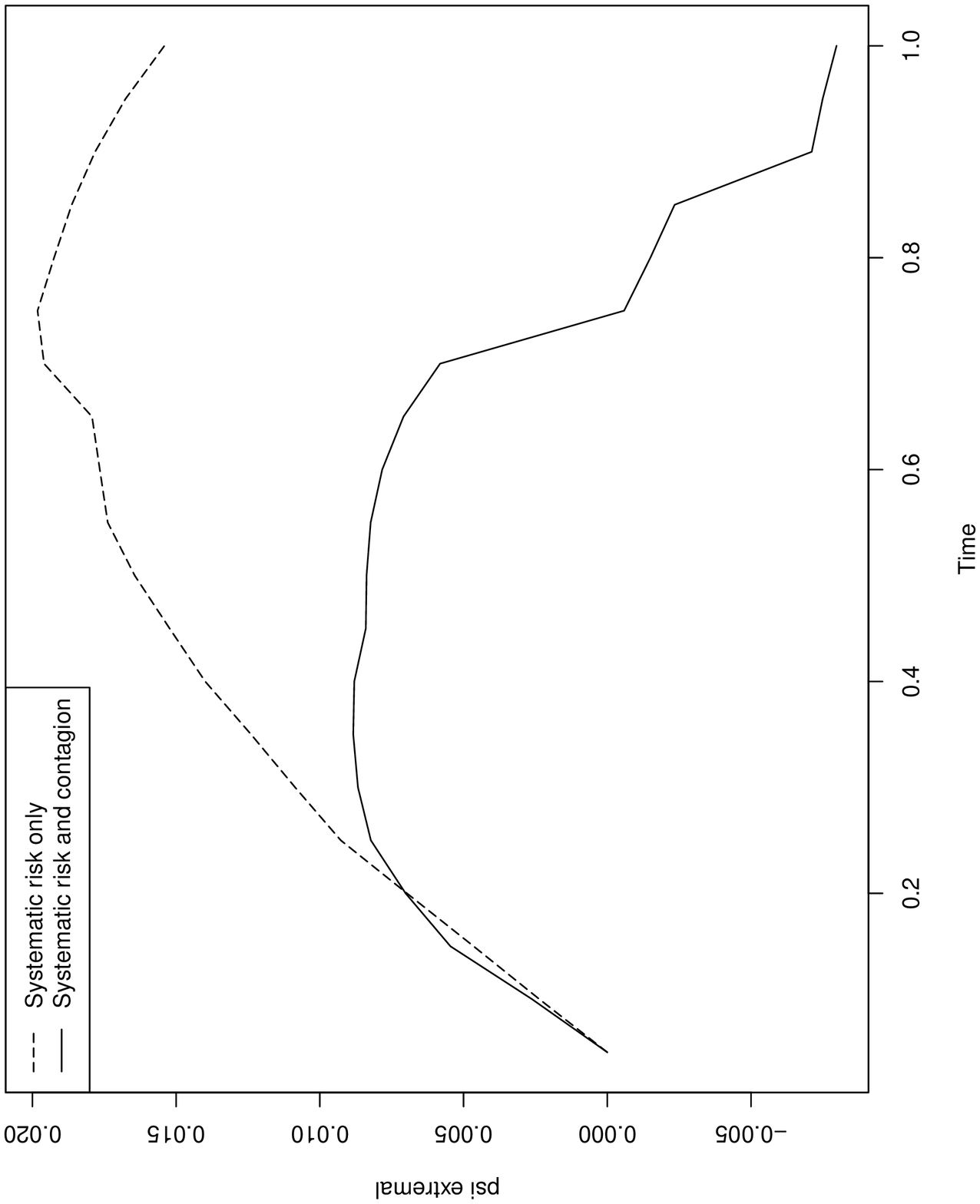}
\end{minipage}
\caption{\label{fig:phiHeter2} \emph{Left panel}: Comparison of $\phi$ extremals for types A and B when systematic risk effects are present. \emph{Right panel}:  Optimal $\psi(t)$ for $t\in[0,1]$ and $\ell=0.85$.}
\end{figure}

\section{Proof of the large deviations principle}\label{S:LDPheterogeneous}

The proof of Theorem \ref{T:LDPHeterogeneous1} goes in two steps. First we prove the corresponding result when $\beta^{C}=0$, namely we prove the LDP in the heterogeneous case when defaults are independent of each other (Subsection \ref{SS:LDPIndHeterogeneous}). Then, based on this result combined with Theorem \ref{T:TransferResult2}, we shall obtain the desired LDP of of Theorem \ref{T:LDPHeterogeneous1} in Subsection \ref{SS:LDPDependentHeterogeneous}.
In Subsection \ref{SS:LDPDependentHeterogeneous2}, we prove Theorem \ref{T:LDPHeterogeneous2}.

\subsection{Large deviations under independence and heterogeneity}\label{SS:LDPIndHeterogeneous}

For each $N\in \N$, let $\{\tau^{\ind,N}_n\}_{n=1}^N$ be the independent collection
of default times with intensities
\begin{equation} \label{E:mainIndependent}
\begin{aligned}
d\lambda^{\ind,\NN}_t &= -\alpha_\NN (\lambda^{\ind,\NN}_t-\bar \lambda_\NN)dt + \sigma_\NN \sqrt{\lambda^{\ind,\NN}_t}dW^n_t  \qquad t>0\\
\lambda^{\ind,\NN}_0 &= \lambda_{\circ, N,n}.
 \end{aligned}
\end{equation}
In other words,
\begin{equation*} \tau^{\ind,N}_n\Def \inf\lb t\ge 0:\, \int_{0}^t \lambda^{\ind,\NN}_s\ge \ee_n\rb \end{equation*}
where the $\ee_n$'s are as in \eqref{E:tau}.  Then $\{\tau^{\ind,N}_n\}_{n=1}^N$
is a collection of independent random variables (although they are not identically distributed ).  Define the empirical measure of the types and default times:
\[
\bnu^{\ind,N}=\frac{1}{N}\sum_{n=1}^{N}\delta_{{\pp}^{\NN},\tau^{\ind,N}_n}.
\]
Moreover, let
\begin{equation}
\bar{H}^{\ind}(\nu)=\begin{cases}
\int_{{\pp}\in{\Types}}H\left(\frac{d\nu}{dU}({\pp}),\mu^{{\pp}}_{0,0}\right)U(d{\pp}) & \text{ if $\frac{\partial\nu}{\partial U}$ exists}\\
\infty &\textrm{otherwise}
\end{cases}
\end{equation}
where $H$ is as in \eqref{E:ClassicalEntropy}.  The definition of $\bar H^{\ind}$ differs from the definition \eqref{E:HDef} of $\bar H$ in that we replace $\mu^{\pp}_{\bar \nu,0}$ in \eqref{E:HDef} by $\mu^{\pp}_{0,0}$.  This corresponds to no contagion or systematic effects, which is the same as
setting $\beta^C=\beta^S=0$ leading to independent default rates.

Then, the corresponding large deviations principle reads as follows.
\begin{theorem}\label{T:LDPmeasuresIndependentHeterogeneous}
The family $\{\bnu^{\ind,N}, N\in\N\}$ satisfies the large deviations principle with rate function $\bar{H}^{\textrm{ind}}(\nu):\Pspace({\Types}\times \TInt)\mapsto[0,\infty]$.
\end{theorem}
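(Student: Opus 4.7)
The plan is to establish the LDP for $\{\bnu^{\ind,N}\}_N$ by a Gärtner-Ellis / Baldi argument, exploiting the genuine independence of the pairs $(\pp^{\NN},\tau^{\ind,N}_n)$ across $n$ that is available in this case (and which is lost in the full model where the contagion term $\beta^C dL^N_t$ couples all intensities). Schematically: (i) obtain exponential tightness from a compactness argument; (ii) compute the limit $\Lambda(\phi)$ of the logarithmic moment generating functional; (iii) deduce the upper bound by Baldi's theorem; (iv) identify the Legendre transform $\Lambda^\ast$ with $\bar{H}^{\ind}$ via the Donsker-Varadhan variational formula and a disintegration/measurable-selection argument; (v) produce the matching lower bound by exponential tilting.

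Exponential tightness is essentially free. Assumption \ref{A:Bounded} places all types in a common compact subset of $\Types$, and $\TInt=[0,T]\cup\{\pt\}$ is itself compact, so every $\bnu^{\ind,N}$ is supported in a fixed compact subset of $\Types\times\TInt$ and therefore lies in a weakly compact subset of $\Pspace(\Types\times\TInt)$. For any $\phi\in C_b(\Types\times\TInt)$, independence of the pairs and the fact that $\tau^{\ind,N}_n$ has law $\mu^{\pp^{\NN}}_{0,0}$ give
\begin{equation*}
\frac{1}{N}\log\BE\!\left[e^{N\la\phi,\bnu^{\ind,N}\ra_E}\right]
=\int_{\Types}\log\!\int_{\TInt}e^{\phi(\pp,t)}\,\mu^{\pp}_{0,0}(dt)\,U_N(d\pp).
\end{equation*}
Continuity of $\pp\mapsto\mu^{\pp}_{0,0}$, which follows from continuous dependence of the CIR SDE \eqref{E:mainIndependent} on its coefficients together with the bounded representation \eqref{Eq:ProbDistribution} and dominated convergence, combined with Assumption \ref{A:regularity}, yields the limit $\Lambda(\phi):=\int_{\Types}\log\!\int_{\TInt}e^{\phi(\pp,t)}\,\mu^{\pp}_{0,0}(dt)\,U(d\pp)$.

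Baldi's theorem then delivers the LDP upper bound with rate function $\Lambda^\ast(\nu)=\sup_{\phi\in C_b}\{\la\phi,\nu\ra-\Lambda(\phi)\}$. To identify $\Lambda^\ast$ with $\bar{H}^{\ind}$, take $\phi(\pp,t)=\psi(\pp)$ depending only on $\pp$ to see that $\Lambda^\ast(\nu)=\infty$ unless the $\Types$-marginal of $\nu$ equals $U$; for such $\nu$, disintegrate $\nu=\xi\otimes U$ with $\xi=\partial\nu/\partial U$ and apply the classical Donsker-Varadhan formula $H(\eta,\mu)=\sup_{\psi\in C_b(\TInt)}\{\la\psi,\eta\ra-\log\int e^{\psi}d\mu\}$ fibrewise at $\eta=\xi(\pp)$, $\mu=\mu^{\pp}_{0,0}$. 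Integrating against $U(d\pp)$, the choice $\phi(\pp,t)=\psi(t)$ gives $\bar{H}^{\ind}(\nu)\le\Lambda^\ast(\nu)$, while the reverse inequality requires passing the supremum inside the integral --- handled by a measurable selection argument producing a near-optimizer $\psi_{\pp}(\cdot)$ and a standard approximation of $\phi(\pp,t)=\psi_{\pp}(t)$ by elements of $C_b(\Types\times\TInt)$, using the compactness established above.

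The main obstacle is the matching lower bound in the heterogeneous setting. For $\nu=\xi\otimes U$ with $\bar{H}^{\ind}(\nu)<\infty$, one constructs a tilted probability $\tilde{\BP}_N$ under which $\tau^{\ind,N}_n$ has law $\xi(\pp^{\NN})$ independently across $n$, computes the Radon-Nikodym derivative $d\BP/d\tilde{\BP}_N=\prod_{n=1}^{N}(d\mu^{\pp^{\NN}}_{0,0}/d\xi(\pp^{\NN}))(\tau^{\ind,N}_n)$, and uses a weak law of large numbers under $\tilde{\BP}_N$ to show that $\bnu^{\ind,N}$ concentrates near $\nu$ while $\tfrac{1}{N}\log(d\tilde{\BP}_N/d\BP)$ converges in $\tilde{\BP}_N$-probability to $\bar{H}^{\ind}(\nu)$. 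The heterogeneity forces this last limit to be obtained by averaging $\int H(\xi(\pp),\mu^{\pp}_{0,0})U_N(d\pp)\to\int H(\xi(\pp),\mu^{\pp}_{0,0})U(d\pp)$, which is where uniform continuity in $\pp$ of the fibrewise entropy, guaranteed by Assumption \ref{A:Bounded} together with the continuity of $\pp\mapsto\mu^{\pp}_{0,0}$, plays its decisive role. Compact level sets of $\bar{H}^{\ind}$ follow from the upper bound together with exponential tightness.
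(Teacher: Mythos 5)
Your overall strategy mirrors the paper's: a variational identification of $\bar{H}^{\ind}$ as a Legendre transform, an exponential-Chebyshev upper bound (equivalently, Baldi's theorem plus the trivial exponential tightness furnished by Assumption~\ref{A:Bounded} and the compactness of $\TInt$), and a change-of-measure lower bound. The paper's Lemma~\ref{L:RateFunctionEquivalentCharacterization} is exactly your identification $\Lambda^{\ast}=\bar{H}^{\ind}$, proved via the fibrewise Donsker--Varadhan formula plus a monotone-convergence trick that plays the role of your measurable-selection argument; those two steps are essentially interchangeable.

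There is, however, a genuine gap in the lower bound as you describe it. You propose to tilt directly to $\xi(\pp)=\tfrac{\partial\nu}{\partial U}(\pp)$, i.e.\ to take $\tau^{\ind,N}_n\sim\xi(\pp^{\NN})$ under $\tilde\BP_N$, and then invoke a weak law of large numbers both for $\bnu^{\ind,N}$ and for $\tfrac1N\log(d\tilde\BP_N/d\BP)$. Both limits require passing from $U_N$ to $U$ in integrals of the form $\int_{\Types}F(\pp)\,U_N(d\pp)$, and for $U_N\to U$ (weak convergence) to do that, the integrand must be bounded continuous in $\pp$. Concretely, concentration of $\bnu^{\ind,N}$ around $\nu=\xi\otimes U$ needs $\pp\mapsto\int_{\TInt}f(\pp,t)\,\xi(\pp)(dt)$ continuous for test functions $f$, and control of the tilting exponent needs $\pp\mapsto H(\xi(\pp),\mu^{\pp}_{0,0})$ continuous. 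Neither holds in general: the disintegration $\xi$ is defined only up to $U$-a.e.\ equivalence and is merely measurable. Your claim that ``uniform continuity in $\pp$ of the fibrewise entropy'' is ``guaranteed by Assumption~\ref{A:Bounded} together with the continuity of $\pp\mapsto\mu^{\pp}_{0,0}$'' is not correct --- those assumptions give continuity of the reference kernel $\pp\mapsto\mu^{\pp}_{0,0}$ (Lemma~\ref{L:Continuity}), but say nothing about $\xi$.

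The repair, which is precisely what the paper's Lemma~\ref{L:LDPindependentLowerBound} implements, is to tilt not to $\xi$ but to a continuous surrogate: choose $\hat\psi\in C(\PT;\R)$ close to $\log(d\xi(\pp)/d\mu^{\pp}_{0,0})$ so that the normalized kernel $\Xi_{\hat\psi}(\pp)(\cdot)\propto e^{\hat\psi(\pp,\cdot)}\mu^{\pp}_{0,0}(\cdot)$ depends continuously on $\pp$ (again via Lemma~\ref{L:Continuity}), so that $\hat\nu:=\Xi_{\hat\psi}\otimes U$ still lies in the open set $G$, and so that the associated entropy is within $\eta$ of $\bar H^{\ind}(\nu^{\ast})$. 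Only after this continuity regularization does $U_N\to U$ deliver the Chebyshev-based weak law of large numbers under the tilted measure and the convergence of the log Radon--Nikodym exponent; without it, your lower bound does not close.
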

\begin{proof}
The upper and lower bound follow by Lemmas \ref{L:LDPUpperBoundClosed} and \ref{L:LDPindependentLowerBound} respectively. Let us now prove that this is a lower-semicontinuous functional with compact level sets. Lower-semicontinuity of $H\left(\frac{d\nu}{dU}({\pp}),\mu^{{\pp}}\right)$ follows immediately, since
$H(\nu,\mu)$ is a convex, lower-semincontinuous function of each variable $\nu$ or $\mu$ separately (e.g., Lemma 1.4.3 in \cite{DupuisEllis}). Compactness of level sets follows again as a
consequence of the corresponding property of the relative entropy $H(\nu,\mu)$ (e.g., Lemma 1.4.3 in \cite{DupuisEllis}). These properties are being inherited to $\bar{H}^{\textrm{ind}}(\nu)$, by Lemma 6.2.16 of \cite{DemboZeitouni}. This completes the proof of the Theorem.
\end{proof}

\begin{remark}
 Under homogeneity, namely if ${\pp}^{\NN}_{0}={\pp}_{0}$ for all $N\in\N$ and $n\in\{1,\cdots,N\}$ then the LDP is a direct consequence of Sanov's theorem and is given by
\eqref{E:ClassicalEntropy}. The fact that the defaults are not identically distributed posses some additional difficulties in the proof, as it is seen below.
\end{remark}

\subsubsection{Compact Support}\label{SSS:ExponentialTightness}
It is often useful (in passing from a large deviations upper bound for compact sets to an upper bound for closed sets) to show sufficient tightness.  In our case,
Assumption \ref{A:Bounded} actually gives us compact support.
Assumption \ref{A:Bounded} implies that there is a bounded subset $K$ of $ \Types$
such that $ \pp^\NN_0\in K$ for all $N\in \N$ and $n\in \{1,2\dots N\}$.
Since $\TInt$ is itself compact, $[-\KK_{\ref{A:Bounded}},\KK_{\ref{A:Bounded}}]^6\times \TInt$ is compact, and fact
\begin{equation*} \bnu^{\ind,N}\left([-\KK_{\ref{A:Bounded}},\KK_{\ref{A:Bounded}}]^6\times \TInt\right) = 1. \end{equation*}
Moving this to measure space, let's define
\begin{equation}\label{E:KDef} \KCal \Def \lb \omega\in \Pspace(\PT):\, \omega\left([-\KK_{\ref{A:Bounded}},\KK_{\ref{A:Bounded}}]^6\times \TInt\right) = 1\rb. \end{equation}
Then $\KCal\subset\subset \Pspace(\PT)$, and $\BP$-a.s. $\bnu^{\ind,N}\in \KCal$
for all $N\in \N$.

\subsubsection{Large deviations upper bound}\label{SSS:LDPIndependentUpperBound}
In this section, we prove the large deviations upper bound.

Let's start with an equivalent characterization of $\bar{H}(\nu)$.
\begin{lemma}\label{L:RateFunctionEquivalentCharacterization}
For every $\nu\in\Pspace(\PT)$ we have that
\[
\bar{H}(\nu)=\sup_{\phi\in C(\Types\times \TInt)}\left\{ \int_{(\pp,t)\in \PT}\phi(\pp,t)\nu(d\pp,dt)-\int_{\pp\in \Types}\log\int_{t\in \TInt}e^{\phi(\pp,t)}\mu^{\pp}_{0,0}(dt) U(d{\pp}) \right\}.
\]
\end{lemma}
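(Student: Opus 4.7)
The plan is to view the identity as a Donsker--Varadhan--type variational formula for the mixture $\bar H^{\textrm{ind}}(\nu)=\int_{\Types}H(\nu^{\pp},\mu^{\pp}_{0,0})U(d\pp)$, where I use the shorthand $\nu^{\pp}\Def \frac{\partial\nu}{\partial U}(\pp)$. First I would dispatch the easy inequality ($\geq$): for any $\phi\in C(\Types\times\TInt)$ and $U$-a.e.~$\pp$, the section $\phi(\pp,\cdot)\in C(\TInt)$ is admissible in the classical Donsker--Varadhan variational representation of $H(\nu^{\pp},\mu^{\pp}_{0,0})$ (e.g.\ Lemma~1.4.3 of \cite{DupuisEllis}), so
\[
H(\nu^{\pp},\mu^{\pp}_{0,0})\;\ge\;\int_{\TInt}\phi(\pp,t)\nu^{\pp}(dt)-\log\int_{\TInt}e^{\phi(\pp,t)}\mu^{\pp}_{0,0}(dt).
\]
Integrating against $U(d\pp)$ and using $d\nu=\nu^{\pp}\otimes U$ yields the $\geq$ bound after taking the supremum over $\phi$.

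The reverse inequality ($\leq$) is the substantive step and I would split it into two cases. If $\nu^{\pp}\not\ll\mu^{\pp}_{0,0}$ on a $U$-positive set $E\subset\Types$, then $\bar H^{\textrm{ind}}(\nu)=\infty$ and I must exhibit a sequence of continuous test functions along which the right-hand side diverges. Using measurable selection on $E$ I pick, for each $\pp\in E$, a Borel set $A^{\pp}\subset\TInt$ with $\mu^{\pp}_{0,0}(A^{\pp})=0<\nu^{\pp}(A^{\pp})$; inner-regularity gives compact $K^{\pp}\subset A^{\pp}$ and outer-regularity gives open $G^{\pp}\supset K^{\pp}$ with $\mu^{\pp}_{0,0}(G^{\pp})$ arbitrarily small, and a Urysohn-type continuous cutoff on $\Types\times\TInt$ (tensored with a continuous localiser in $\pp$) produces $\phi_M\in C(\Types\times\TInt)$ supported near these witnesses whose value is of order $M$ on $\nu$-mass and whose exponential integral against $\mu^{\pp}_{0,0}$ stays bounded. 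Letting $M\to\infty$ drives the right-hand side to $+\infty$.

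In the generic case $\bar H^{\textrm{ind}}(\nu)<\infty$, so $\nu^{\pp}\ll\mu^{\pp}_{0,0}$ for $U$-a.e.~$\pp$ and the measurable density $\rho(\pp,t)=d\nu^{\pp}/d\mu^{\pp}_{0,0}(t)$ exists. The pointwise Donsker--Varadhan optimiser is $\log\rho$, which realises equality in the integrated formula but is not jointly continuous; the task is to produce a sequence of continuous near-optimisers. I would first truncate to $\phi_M=(\log\rho)\wedge M\vee(-M)$, reducing to bounded Borel functions. Since the finite measure $\nu+(U\otimes\mu^{\pp}_{0,0})$ is regular on the Polish space $\Types\times\TInt$, Lusin's theorem together with a Tietze extension produces $\phi_{M,k}\in C(\Types\times\TInt)$ with $|\phi_{M,k}|\le M$ converging to $\phi_M$ in measure and hence, by bounded convergence, in both $\int\phi_{M,k}\,d\nu$ and $\int\log\int e^{\phi_{M,k}(\pp,\cdot)}d\mu^{\pp}_{0,0}\,U(d\pp)$ (using that $\pp\mapsto\mu^{\pp}_{0,0}$ is weakly measurable, which follows from the explicit formula of Lemma~\ref{L:DensityFcn} applied with $\varphi=\psi=0$, so the inner log-Laplace term is bounded Borel in $\pp$). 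Passing $k\to\infty$ then $M\to\infty$ and invoking monotone convergence on the tails of $\log\rho$ (together with $\rho\log\rho$-integrability from finiteness of $\bar H^{\textrm{ind}}(\nu)$) recovers $\bar H^{\textrm{ind}}(\nu)$ as a limit of values attained by continuous $\phi$, giving the $\leq$ bound.

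The main obstacle is precisely the joint continuity requirement: one cannot simply plug in $\log\rho$ and the tail-truncation-plus-Lusin-plus-Tietze recipe is where one has to be careful, especially to maintain integrability of $e^{\phi_{M,k}}$ against $\mu^{\pp}_{0,0}$ uniformly in $\pp$ so that the log-Laplace passes to the limit; the uniform bound $|\phi_{M,k}|\le M$ is what rescues the argument at each stage before the final $M\to\infty$ diagonal passage.
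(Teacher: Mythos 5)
Your argument is correct in broad outline but takes a genuinely different route from the paper's. For the direction $\sup_\phi(\cdots)\le\bar H^{\ind}(\nu)$ you integrate the pointwise Donsker--Varadhan formula, exactly as the paper does. The divergence is in the converse direction. The paper defines $F_N(\pp)=\sup_{\phi\in C(\PT),\,\|\phi\|\le N}G(\phi(\pp,\cdot),\pp)$, picks a single continuous $\phi_N^*$ that is $\eta$-near-optimal \emph{uniformly in} $\pp$, and lets $N\to\infty$ by monotone convergence; it never constructs the optimizer from the density. You instead write down the explicit pointwise maximizer $\log\rho$ with $\rho=d\nu^{\pp}/d\mu^{\pp}_{0,0}$, truncate to $\phi_M$, and pass to continuous approximants by Lusin plus Tietze, sending $k\to\infty$ then $M\to\infty$. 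Your route replaces the paper's slightly delicate ``continuous near-selection in $\pp$'' step by standard measure-theoretic approximation, which is arguably cleaner, at the cost of a somewhat longer chain of limits; both the DCT argument in $M$ (using $\rho\log\rho\in L^1$ on $\{\rho>1\}$ and the global lower bound $x\log x\ge -1/e$) and the convergence of the log-Laplace term (using $|\phi_{M,k}|\le M$ so the inner integral stays in $[e^{-M},e^{M}]$) go through as you sketch.

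There is, however, a gap in the infinite-entropy case: your two branches both presuppose that the stochastic kernel $\nu^{\pp}=\frac{\partial\nu}{\partial U}(\pp)$ exists, i.e., that the $\Types$-marginal of $\nu$ is absolutely continuous with respect to $U$. The definition \eqref{E:HDef} sets $\bar H^{\ind}(\nu)=\infty$ already when this disintegration fails to exist, and that case must be covered separately: one can no longer speak of $\nu^{\pp}$ or of sets $E$ on which $\nu^{\pp}\not\ll\mu^{\pp}_{0,0}$. The paper handles it by picking $A\times B$ with $\nu(A\times B)>0$ but $U(A)=0$, a closed $F\subset A$ with $\nu(F\times B)>0$, and test functions $\phi^c_N(\pp,t)=c\exp[-N\,\mathrm{dist}_{\PT}((\pp,t),F\times B)]$; the log-Laplace term collapses (for $\pp\notin F$, which is $U$-a.e., the inner integral is $1$), while the linear term tends to $c\,\nu(F\times B)$, so letting $c\nearrow\infty$ gives the required $+\infty$. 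Your Urysohn/localiser construction is in the same spirit, so the repair is straightforward, but as written the case is missing and must be added; otherwise the claimed equality is not established for all $\nu\in\Pspace(\PT)$.
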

\begin{proof}  First we assume that the stochastic kernel $\xi=\frac{\partial \nu}{\partial U}$ exists.  For $\phi\in C(\TInt)$ and $\pp\in \PP$, let's define
\begin{equation*} G(\phi,\pp) \Def \int_{t\in \TInt}\phi(t)\xi(\pp)(dt) - \log \int_{t\in \TInt}e^{\phi(t)}\mu^\pp_{0,0}(dt). \end{equation*}
The Donsker-Varadhan variational representation of the relative entropy \cite[Theorem 1.4.3]{DupuisEllis} is that
\begin{equation*} H(\xi(\pp),\mu^{\pp}_{0,0}) = \sup_{\phi\in C(\TInt)}G(\phi,\pp). \end{equation*}

For any $\phi\in C(\PT;\R)$, we have that
\begin{align*}
&\int_{(\pp,t)\in \Types\times \TInt}\phi(\pp,t)\nu(d\pp,dt)-\int_{\pp\in \Types}\log\int_{t\in \TInt}e^{\phi(\pp,t)}\mu^{\pp}_{0,0}(dt) U(d\pp) \\
&=\int_{\pp\in \Types}\lb \int_{t\in \TInt}\phi(\pp,t)\xi(\pp)(dt)-\log\int_{t\in \TInt}e^{\phi(\pp,t)}\mu^{\pp}_{0,0}(dt)\rb U(d\pp) \\
&=\int_{\pp\in \Types}G(\phi(\pp,\cdot),\pp) U(d\pp)
\le \int_{\pp\in \Types}H(\xi(\pp),\mu^\pp_{0,0}) U(d\pp)
= \bar H^{\ind}(\nu).
\end{align*}

To prove the opposite inequality, define
\[
F_N(\pp)=\sup_{\phi\in C(\PT), \|\phi\|\leq N}G(\phi(\pp,\cdot),\pp)
\]
for all positive integers $N$.  Fix now $\eta>0$.  For each $N$, let
$\phi_N^{*} \in C(\PT;\R)$ be such that $\|\phi_N^*\|\le N$
and $G(\phi^*_N(\pp,\cdot),\pp)\ge F_N(\pp)-\eta$.
Since $N\mapsto F_N(\pp)$ is non-decreasing for each $\pp\in \PP$,
monotone convergence implies that
\begin{align*}
\bar{H}(\nu)&=\int_{\pp\in \Types}\lim_{N\rightarrow\infty}F_N(\pp)U(d\pp)
\leq \lim_{N\rightarrow\infty} \int_{\pp\in \Types}F_N(\pp)U(d{\pp})\\
&\leq \lim_{N\rightarrow\infty} \int_{\pp\in \Types}G(\phi_N^{*}(\pp),\pp)U(d{\pp})+\eta
\leq \sup_{\phi\in C(\PT)} \int_{{\Types}}G(\phi,{\pp})U(d{\pp})+\eta\\
\end{align*}
Let $\eta\searrow 0$; combining things together, we have the claim when $\frac{\partial \nu}{\partial U}$ exists.

Let's next consider the case where $\frac{\partial \nu}{\partial U}$ is not well defined; thus $\bar H^{\ind}(\nu)=\infty$. Then there
is an $A\in \Borel(\Types)$ and $B\in \Borel(\TInt)$ such that
$\nu(A\times B)>0$ and $U(B)=0$.  Since $\PP$ is Polish,
there is a closed subset $F$ of $A$ such that $\nu(F\times B)>0$.
For $c>0$ and $N\in \N$, define
\[
\phi^c_N(\pp,t)=c \exp\left[-N\operatorname{dist}_{\PT}((\pp,t),F\times B)\right]
\]
where $\operatorname{dist}_{\PT}(\cdot,F\times B)$ is the distance (in $\PT$) to $F\times B$.  Note that
\begin{equation*} \lim_{N\to\infty}\phi^c_N = c\chi_{F\times B} \end{equation*}
pointwise.  By dominated convergence, we then have that
\begin{align*}
&\sup_{\phi\in C(\PT)}\lb \int_{(\pp,t)\in \PT}\phi(\pp,t)\nu(d\pp,dt)
-\int_{\pp\in \PP}\lb \log \int_{t\in \TInt}e^{\phi(\pp,t)}\mu^\pp_{0,0}(dt)\rb U(d\pp)\rb \\
&\qquad \ge \varlimsup_{N\to \infty}\lb \int_{(\pp,t)\in \PT}\phi^c_N(\pp,t)\nu(d\pp,dt)
-\int_{\pp\in \PP}\lb \log \int_{t\in \TInt}e^{\phi^c_N(\pp,t)}\mu^\pp_{0,0}(dt)\rb U(d\pp)\rb \\
&\qquad = \int_{(\pp,t)\in \PT}c\chi_{F\times B}(\pp,t)\nu(d\pp,dt)
-\int_{\pp\in \PP}\lb \log \int_{t\in \TInt}e^{c\chi_{F\times B}(\pp,t)}\mu^\pp_{0,0}(dt)\rb U(d\pp)
\end{align*}
If $\pp\in F^c$, then
\begin{equation*} \int_{t\in \TInt}e^{c\chi_{F\times B}(\pp,t)}\mu^\pp_{0,0}(dt) = \int_{t\in \TInt}1\mu^\pp_{0,0}(dt) = \mu^\pp_{0,0}(\TInt)=1. \end{equation*}
Thus
\begin{equation*}
\sup_{\phi\in C(\PT)}\lb \int_{(\pp,t)\in \PT}\phi(\pp,t)\nu(d\pp,dt)
-\int_{\pp\in \PP}\lb \log \int_{t\in \TInt}e^{\phi(\pp,t)}\mu^\pp_{0,0}(dt)\rb U(d\pp)\rb \ge c\nu(F\times B).
\end{equation*}
Letting $c\nearrow \infty$, we get the claim when $\frac{\partial \nu}{\partial U}$ is not well defined, finishing the proof.
\end{proof}

We now can prove the upper bound.
\begin{lemma}\label{L:LDPUpperBoundClosed}
For any closed set $F\subset \Pspace(\PT)$, we have
\[
\varlimsup_{N\rightarrow\infty}\frac{1}{N}\ln\BP\lb \bnu^{\ind,N}\in F\rb \leq -\inf_{\nu\in F}H^{\ind} (\nu)
\]
\end{lemma}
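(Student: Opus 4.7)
The plan is to combine the variational representation from Lemma \ref{L:RateFunctionEquivalentCharacterization} with a standard Chernoff-plus-covering argument, exploiting the independence of $\{\tau^{\ind,N}_n\}_{n=1}^N$ in an essential way. Because $\bnu^{\ind,N}$ is almost surely supported on the compact set $\KCal\subset\subset\Pspace(\PT)$ from \eqref{E:KDef}, I can replace $F$ by $F\cap\KCal$ and therefore assume without loss of generality that $F$ is compact.

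The first substantive step is to identify the limiting log-moment generating function. Fix $\phi\in C(\PT;\R)$; independence of the $\tau^{\ind,N}_n$'s combined with the fact that $\tau^{\ind,N}_n$ has law $\mu^{\pp^\NN}_{0,0}$ (by \eqref{E:muDef}) gives
\[
\BE\left[\exp\left(N\int_{\PT} \phi\, d\bnu^{\ind,N}\right)\right] = \exp\left(N\int_{\Types}\Lambda_\phi(\pp)\, U_N(d\pp)\right),
\]
where $\Lambda_\phi(\pp)\Def \log\int_{\TInt} e^{\phi(\pp,t)}\,\mu^{\pp}_{0,0}(dt)$. Assuming weak continuity of $\pp\mapsto\mu^{\pp}_{0,0}$ on the compact range of the types (the one non-routine input, discussed below), $\Lambda_\phi$ is continuous there, and Assumption \ref{A:regularity} yields $\Lambda(\phi)\Def\lim_N \frac{1}{N}\log\BE[\exp(N\int \phi\, d\bnu^{\ind,N})] = \int_{\Types}\Lambda_\phi(\pp)\, U(d\pp)$. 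Markov's inequality then produces, for each open $G\subset\Pspace(\PT)$,
\[
\varlimsup_N \frac{1}{N}\log \BP\{\bnu^{\ind,N}\in G\} \le -\inf_{\nu\in G}\int\phi\, d\nu + \Lambda(\phi).
\]

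To convert this into the desired closed-set bound, fix $\epsilon>0$. For each $\nu_0\in F$, use Lemma \ref{L:RateFunctionEquivalentCharacterization} to pick $\phi_{\nu_0}\in C(\PT)$ satisfying $\int\phi_{\nu_0}\, d\nu_0 - \Lambda(\phi_{\nu_0}) \ge \min\{\bar{H}^{\ind}(\nu_0) - \epsilon,\, 1/\epsilon\}$ (the truncation handles the possibility $\bar{H}^{\ind}(\nu_0) = \infty$), and use weak continuity of $\nu\mapsto\int\phi_{\nu_0}\, d\nu$ to choose an open neighborhood $G_{\nu_0}$ of $\nu_0$ on which $\int\phi_{\nu_0}\, d\nu \ge \int\phi_{\nu_0}\, d\nu_0 - \epsilon$. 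Combined with the Markov bound, this yields
\[
\varlimsup_N \frac{1}{N}\log\BP\{\bnu^{\ind,N}\in G_{\nu_0}\} \le -\min\{\bar{H}^{\ind}(\nu_0),\, 1/\epsilon\} + 2\epsilon.
\]
A finite subcover $G_{\nu_1},\dots,G_{\nu_K}$ of the compact $F$, the union bound, and $\epsilon\searrow 0$ then give the claim.

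The principal technical obstacle is verifying the weak continuity of $\pp\mapsto\mu^{\pp}_{0,0}$. Since the choice $\varphi=\psi=0$ reduces \eqref{E:DuffiePanSingletonequation} to a pure CIR diffusion depending only on $(\alpha,\bar\lambda,\sigma,\lambda_\circ)$, a Yamada--Watanabe type stability estimate for SDEs with $\sqrt{\,\cdot\,}$ coefficient, applied on the compact parameter range guaranteed by Assumption \ref{A:Bounded}, yields convergence in law of $\lambda^{0,0}(\pp)$ on $C([0,T];\R)$; weak continuity of $\pp\mapsto\mu^{\pp}_{0,0}$ in $\Pspace(\TInt)$ then follows from \eqref{E:muDef} and bounded convergence. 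The remainder is the classical G\"artner--Ellis-style upper bound specialized to the heterogeneous independent setting.
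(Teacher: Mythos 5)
Your proposal is correct and follows essentially the same route as the paper: replace $F$ by the compact $F\cap\KCal$, use the variational characterization of Lemma~\ref{L:RateFunctionEquivalentCharacterization} to produce an open cover, apply the exponential Chebychev/Chernoff bound (factorizing the moment generating function via independence into $\exp[N\int\Lambda_\phi\,dU_N]$), extract a finite subcover, use the union bound, and pass to the limit using $U_N\to U$ together with continuity of $\pp\mapsto\mu^\pp_{0,0}$. The only cosmetic differences are that the paper covers $F\cap\KCal$ directly by the sublevel complements $A_\phi=\{\nu: \int\phi\,d\nu-\Lambda(\phi)>\ell\}$ for a fixed $\ell$ below the infimum rather than by $\epsilon$-neighborhoods around each $\nu_0$, and that the paper's Lemma~\ref{L:Continuity} deduces the weak continuity of $\pp\mapsto\mu^\pp_{0,0}$ from the closed-form Riccati-based expression in Lemma~\ref{L:DensityFcn} rather than from a Yamada--Watanabe stability estimate (both are valid, but the explicit formula is the more direct route here).
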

\begin{proof}  Using \eqref{E:KDef}, we start with the fact that
\begin{equation}\label{E:cutoff} \BP\lb\bnu^{\ind,N}\in F\rb = \BP\lb\bnu^{\ind,N}\in F\cap \KCal\rb. \end{equation}

Fix $\ell<\inf_{\nu\in F\cap \KCal}\bar H^{\ind}(\nu) $.   For every $\phi\in C(\PT;\R)$, define
\[
A_\phi=\left\{\nu\in \Pspace(\PT): \int_{(\pp,t)\in \PT}\phi(\pp,t)\nu(d\pp,dt)-\int_{\pp\in \Types}\lb \log \int_{t\in \TInt}e^{\phi(\pp,t)}\mu^{\pp}(dt)\rb U(d\pp)>\ell\right\}
\]
By Lemma \ref{L:RateFunctionEquivalentCharacterization}, we have that
\begin{equation*} F\cap \KCal \subset \cup_{\phi\in C(\PT)}A_\phi. \end{equation*}
Since $F\cap \KCal$ is compact, we in fact have that
\begin{equation*} F\cap \KCal \subset \cup_{\phi\in \Phi}A_\phi \end{equation*}
for some finite subset $\Phi$ of $C(\PT;\R)$.
Consequently
\begin{equation}\label{E:cover} \BP\lb\bnu^{\ind,N}\in F\cap \KCal\rb \leq \sum_{\phi\in \Phi}\BP\{\nu^{\ind,N}\in A_\phi\}. \end{equation}

Using the exponential Chebychev inequality, we calculate that
\begin{align*}
\BP\lb\bnu^{\ind,N}\in A_\phi\rb
&\leq \BP\lb \int_{(\pp,t)\in \PT}\phi(\pp,t)\bnu^{\ind,N}(d\pp,dt)\right.\\
&\qquad \left. >\ell+\int_{\pp\in \Types}\lb \log \int_{t\in \TInt}e^{\phi(\pp,t)}\mu^{\pp}_{0,0}(dt)\rb U(d\pp)\rb \\
&= \BP\lb N\int_{(\pp,t)\in \PT}\phi(\pp,t)\bnu^{\ind,N}(d\pp,dt)\right.\\
&\qquad \left.>N\ell+N\int_{\pp\in \Types}\lb \log \int_{t\in \TInt}e^{\phi(\pp,t)}\mu^{\pp}_{0,0}(dt)\rb U(d\pp)\rb \\
&\le \exp\left[-N\ell\right] \BE\exp\left[N\int_{(\pp,t)\in \PT}\phi(\pp,t)\bnu^{\ind,N}(d\pp,dt)\right]\\
&\qquad \times \exp\left[-N\int_{\pp\in \Types}\lb \log \int_{t\in \TInt}e^{\phi(\pp,t)}\mu^{\pp}_{0,0}(dt)\rb U(d\pp)\right]\\
&\le \exp\left[-N\ell\right] \exp\left[N\int_{\pp\in \Types}\lb \log \int_{t\in \TInt}e^{\phi(\pp,t)}\mu^{\pp}_{0,0}(dt)\rb U_N(d\pp)\right]\\
&\qquad \times \exp\left[-N\int_{\pp\in \Types}\lb \log \int_{t\in \TInt}e^{\phi(\pp,t)}\mu^{\pp}_{0,0}(dt)\rb U(d\pp)\right]
\end{align*}

Thus
\[
\varlimsup_{N\rightarrow\infty}\frac{1}{N}\ln\BP\lb \bnu^{\ind,N}\in A_\phi\rb\leq -\ell.
\]

Using this a finite number of times in \eqref{E:cover}, we get that
\begin{equation*} \varlimsup_{N\rightarrow\infty}\frac{1}{N}\ln\BP\lb\bnu^{\ind,N}\in F\cap \KCal\rb \le -\ell. \end{equation*}

Letting $\ell\nearrow \inf_{\nu\in F\cap \KCal}\bar H^{\ind}(\nu)$, we have that
\begin{equation*} \varlimsup_{N\rightarrow\infty}\frac{1}{N}\ln\BP\lb\bnu^{\ind,N}\in F\cap \KCal\rb \le -\inf_{\nu\in F\cap \KCal}\bar H^{\ind}(\nu). \end{equation*}

Finally returning to \eqref{E:cutoff}, we calculate that
\begin{equation*}\varlimsup_{N\rightarrow\infty}\frac{1}{N}\ln\BP\lb\bnu^{\ind,N}\in F\rb
\le   -\inf_{\nu\in F\cap \KCal}\bar H^{\ind}(\nu) \le -\inf_{\nu\in F}\bar H^{\ind}(\nu)
\end{equation*}
giving us the claim.
\end{proof}

\subsubsection{Large deviations lower bound}\label{SSS:LDPIndependentLowerBound}
In this section, we prove the large deviations lower bound.

We need the following continuity result.
\begin{lemma}\label{L:Continuity} The map $\pp\mapsto \mu^{\pp}_{0,0}$
is continuous as a map from $\PP$ to $\Pspace(\TInt)$.\end{lemma}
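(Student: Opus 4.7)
The plan is to exploit the explicit Duffie--Pan--Singleton representation from Lemma \ref{L:DensityFcn} in the special case $\varphi=\psi=0$, which converts the problem from one about SDEs with a degenerate square root coefficient into one about a tame Riccati ODE whose continuous dependence on parameters is classical.

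First I would observe that when $\psi \equiv 0$, the defining equation \eqref{E:DuffiePanSingleton2} reduces to
\begin{equation*}
\theta_t^{\pp}(s) = \int_{0}^{s}\left(1 - \tfrac{1}{2}\sigma^2 (\theta_t^{\pp}(r))^2 - \alpha\,\theta_t^{\pp}(r)\right)dr,
\end{equation*}
whose right-hand side has no explicit $t$-dependence. Hence $\theta_t^{\pp}(s)$ depends only on $s$ and on the components $(\alpha,\sigma)$ of $\pp$; call this function $b^{\pp}(s)$. It satisfies the Riccati ODE
\begin{equation*}
\dot b^{\pp}(s) = 1 - \tfrac{1}{2}\sigma^{2}(b^{\pp}(s))^{2} - \alpha\, b^{\pp}(s), \qquad b^{\pp}(0)=0.
\end{equation*}
The vector field is locally Lipschitz in $b$ and jointly continuous in $(\alpha,\sigma,b)$, and the stable equilibrium structure of the Riccati equation keeps $b^{\pp}$ bounded on $[0,T]$ uniformly over $\pp$ in bounded sets of $\Types$. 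Standard continuous dependence of ODE solutions on parameters therefore yields that $\pp \mapsto b^{\pp}$ is continuous from $\Types$ to $C([0,T];\R)$ with the sup norm, together with the corresponding statement for the derivative $\dot b^{\pp}$ via the ODE itself.

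Next, from the definition of $\Gamma$ just before Lemma \ref{L:DensityFcn}, in the case $\varphi=\psi=0$ we have
\begin{equation*}
\Gamma^{\pp}_{0,0}(t) = b^{\pp}(t)\lambda_{\circ} + \alpha\bar{\lambda}\int_{0}^{t}b^{\pp}(r)\,dr,
\end{equation*}
which is continuous in $\pp$ uniformly on $[0,T]$ together with its time derivative; this also covers the $\lambda_{\circ}$ and $\bar{\lambda}$ components of $\pp$. Consequently, by Lemma \ref{L:DensityFcn},
\begin{equation*}
f^{\pp}_{0,0}(t) = \dot\Gamma^{\pp}_{0,0}(t)\,\exp\!\left[-\Gamma^{\pp}_{0,0}(t)\right]
\end{equation*}
is jointly continuous in $(\pp,t)$, and the convergence $f^{\pp_n}_{0,0}\to f^{\pp}_{0,0}$ whenever $\pp_n\to\pp$ is uniform on $[0,T]$.

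Finally, uniform convergence of $f^{\pp_n}_{0,0}$ implies convergence of both the absolutely continuous part $f^{\pp_n}_{0,0}(t)\,dt$ on $[0,T]$ and the mass $1-\int_{0}^{T}f^{\pp_n}_{0,0}(t)\,dt$ of the atom at $\star$, so $\mu^{\pp_n}_{0,0}\to \mu^{\pp}_{0,0}$ in total variation on $\TInt$, and in particular weakly in $\Pspace(\TInt)$. The main obstacle one might anticipate --- the square-root degeneracy in \eqref{E:DuffiePanSingletonequation} that would complicate a direct SDE-based continuity argument --- is entirely side-stepped by working through the Riccati representation.
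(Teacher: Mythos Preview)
Your proposal is correct and is essentially a detailed fleshing-out of the paper's own proof, which is the single line ``This follows fairly easily from the explicit formula \eqref{E:muDef}.'' You have made explicit the natural route to that claim, namely passing through the Riccati representation of Lemma \ref{L:DensityFcn} to get continuity of $f^{\pp}_{0,0}$ (and hence of $\mu^{\pp}_{0,0}$) in $\pp$, and you correctly note that this avoids the square-root degeneracy that would complicate a direct SDE argument.
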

\begin{proof} This follows fairly easily from the explicit formula \eqref{E:muDef}.\end{proof}
\noindent We can now prove the lower bound.
\begin{lemma}\label{L:LDPindependentLowerBound}
Let $G$ be an open subset of $\Pspace(\PT)$. Then
\begin{equation*}
\varliminf_{N\rightarrow\infty}\frac{1}{N}\ln\BP\lb \bnu^{\ind,N}\in G\rb\geq -\inf_{\nu\in G}\bar H^{\ind}(\nu)
\end{equation*}
\end{lemma}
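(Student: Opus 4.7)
The plan is to prove the lower bound by the standard change-of-measure technique, adapted to the heterogeneous setting where the tilt must depend on the ``type'' $\pp$. It clearly suffices to show that for every $\nu^{*}\in G$ with $\bar H^{\ind}(\nu^{*})<\infty$, one has $\varliminf_{N\to\infty}\tfrac{1}{N}\log\BP\{\bnu^{\ind,N}\in G\}\ge -\bar H^{\ind}(\nu^{*})$; the infimum over $\nu^{*}\in G$ then delivers the claim.

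The first step is a reduction. Using lower semicontinuity of $\bar H^{\ind}$ (Theorem \ref{T:LDPmeasuresIndependentHeterogeneous}) and openness of $G$, I would approximate $\nu^{*}$ by measures of the form $d\nu(\pp,t)=h(\pp,t)\mu^{\pp}_{0,0}(dt)U(d\pp)$ where $(\pp,t)\mapsto h(\pp,t)$ is jointly continuous on the compact support $\KCal$ and satisfies $M^{-1}\le h\le M$ for some $M<\infty$. Convexity of the relative entropy and continuity of $\pp\mapsto \mu^{\pp}_{0,0}$ (Lemma \ref{L:Continuity}) ensure that $\bar H^{\ind}$ along the approximants converges to $\bar H^{\ind}(\nu^{*})$, so it is enough to argue for such regular $\nu^{*}$.

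Next, for each $N$ define the tilted measure $\BP^{N}$ by the product density
\begin{equation*}
\frac{d\BP^{N}}{d\BP}=\prod_{n=1}^{N}h(\pp^{\NN},\tau^{\ind,N}_{n}).
\end{equation*}
Since under $\BP$ the $\tau^{\ind,N}_{n}$ are independent with laws $\mu^{\pp^{\NN}}_{0,0}$, under $\BP^{N}$ they remain independent with laws $h(\pp^{\NN},\cdot)\mu^{\pp^{\NN}}_{0,0}$. The expected empirical measure under $\BP^{N}$ is $\int \delta_{\pp}\otimes h(\pp,\cdot)\mu^{\pp}_{0,0}\,U_{N}(d\pp)$, which converges weakly to $\nu^{*}$ by Assumption \ref{A:regularity}, the continuity of $h$ and $\pp\mapsto \mu^{\pp}_{0,0}$, and boundedness of $h$; a standard second-moment estimate (using independence and boundedness of $h$) then shows $\BP^{N}\{\bnu^{\ind,N}\in B\}\to 1$ for any open neighborhood $B$ of $\nu^{*}$.

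For the main inequality, pick $B\subset G$ an open neighborhood of $\nu^{*}$ and write
\begin{equation*}
\BP\{\bnu^{\ind,N}\in G\}\ge \BE^{N}\!\left[\mathbf{1}_{\{\bnu^{\ind,N}\in B\}}\exp\!\left(-\sum_{n=1}^{N}\log h(\pp^{\NN},\tau^{\ind,N}_{n})\right)\right].
\end{equation*}
Applying Jensen's inequality to the conditional expectation given $\{\bnu^{\ind,N}\in B\}$ yields
\begin{equation*}
\frac{1}{N}\log \BP\{\bnu^{\ind,N}\in G\}\ge \frac{1}{N}\log \BP^{N}(B)-\frac{1}{N\,\BP^{N}(B)}\BE^{N}\!\left[\mathbf{1}_{B}\sum_{n=1}^{N}\log h(\pp^{\NN},\tau^{\ind,N}_{n})\right].
\end{equation*}
The first term vanishes, and on the event $B$ the log-density is bounded so Lebesgue's theorem combined with $U_{N}\to U$ gives that the second term converges to $\int_{\Types}\int_{\TInt}\log h(\pp,t)\,h(\pp,t)\mu^{\pp}_{0,0}(dt)U(d\pp)=\bar H^{\ind}(\nu^{*})$.

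The hard part is not the change-of-measure step itself, which is now routine, but the reduction in the first step: one must approximate an arbitrary finite-entropy kernel $\xi=\partial\nu^{*}/\partial U$ by a measurably (indeed continuously) chosen $h$-tilt of $\mu^{\pp}_{0,0}$, in such a way that the tilted measure lies in $G$ and its entropy converges. This needs a mollification respecting the heterogeneity of the type variable, for which I would invoke the continuity of $\pp\mapsto\mu^{\pp}_{0,0}$ to patch together local perturbations via a partition of unity on the compact subset of $\Types$ supporting $U$, followed by truncating $h$ away from $0$ and $\infty$ and verifying that the resulting entropy increase is small.
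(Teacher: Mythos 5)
Your proposal follows essentially the same route as the paper's proof: change of measure by a type-dependent exponential tilt, a weak law of large numbers for $\bnu^{\ind,N}$ under the tilted measure (via independence and a second-moment/Chebychev bound), and control of the Radon--Nikodym derivative on a small neighborhood, together with a preliminary reduction to regular (bounded, continuous) tilts. Your use of Jensen's inequality to handle the exponent is a cosmetic variant of what the paper does (the paper instead shrinks the neighborhood $B$ until $S_N$ is within $\eta$ of $\bar H^{\ind}(\nu^*)$ on $\{\bnu^{\ind,N}\in B\}$); both are standard and equivalent here.

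One small point deserves attention. You define $d\BP^{N}/d\BP=\prod_{n=1}^{N}h(\pp^{\NN},\tau^{\ind,N}_{n})$ and assert this is a probability density under which the $\tau^{\ind,N}_{n}$ have laws $h(\pp^{\NN},\cdot)\mu^{\pp^{\NN}}_{0,0}$. For this you need $\int_{\TInt} h(\pp^{\NN},t)\,\mu^{\pp^{\NN}}_{0,0}(dt)=1$ for every $n$ and $N$; but taking $\nu$ to be a probability measure with disintegration $h(\pp,\cdot)\mu^{\pp}_{0,0}$ only gives this normalization for $U$-a.e.\ $\pp$, and the finitely many types $\pp^{\NN}$ need not lie in $\operatorname{supp} U$ (Assumption \ref{A:regularity} is only weak convergence of $U_N$ to $U$). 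The remedy, which the paper builds in from the start, is to write $h(\pp,t)=e^{\hat\psi(\pp,t)}\big/\int_{\TInt} e^{\hat\psi(\pp,s)}\mu^{\pp}_{0,0}(ds)$ for a continuous $\hat\psi$, so that the normalization holds identically in $\pp$; equivalently, the paper's $S_N$ includes the normalizing term $\int_{\Types}\log\int_{\TInt}e^{\hat\psi(\pp,t)}\mu^{\pp}_{0,0}(dt)\,U_N(d\pp)$, which guarantees $\BE[e^{NS_N}]=1$ exactly. With that adjustment your outline is sound.
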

\begin{proof}  We proceed in a standard way.  It suffices to fix a $\nu^*\in G$ such that $\bar H^{\ind}(\nu^*)<\infty$ and a $\eta>0$ and show that
\begin{equation}\label{E:goal}
\varliminf_{N\rightarrow\infty}\frac{1}{N}\ln\BP\lb \bnu^{\ind,N}\in G\rb\geq -\bar H^{\ind}(\nu^*)-\eta.
\end{equation}

Let's understand the implications of $\bar H^{\ind}(\nu^*)<\infty$.  Then $\xi\Def \frac{\partial \nu*}{\partial U}$ exists and
\begin{equation*} \int_{\pp\in \PP}H(\xi(\pp),\mu^{\pp}_{0,0})U(d\pp)<\infty \end{equation*}
so $H(\xi(\pp),\mu^{\pp}_{0,0})<\infty$ for $U$-a.e. $\pp\in \PP$.  Thus
$\xi(\pp)\ll \mu^{\pp}_{0,0}$ for $U$-a.e. $\pp\in \PP$.  Finally, this and Tonelli's theorem ensures that
\begin{equation*} \phi(\pp,t) \Def \frac{d\xi(\pp)}{d\mu^{\pp}_{0,0}}(t) \end{equation*}
is well-defined.  In fact, defining the stochastic kernel $\mu^{\bullet}_{0,0}(p)\Def \mu^{\pp}_{0,0}$, we have that
\begin{equation}\label{E:RND} \phi = \frac{d\nu^*}{d(\mu^{\bullet}_{0,0}\otimes U)} \end{equation}
so $\phi$ is measurable.  Using \eqref{E:RND}, we furthermore note that
\begin{equation*} \nu^*\lb (\pp,t)\in \PT:\, \phi(\pp,t)=0\rb
= \int_{(\pp,t)\in \PT}\chi_{\{0\}}(\phi(\pp,t))\phi(\pp,t) (\mu^{\bullet}_{0,0}\otimes U)(d\pp,dt) = 0. \end{equation*}

Let's thus define
\begin{equation*} \psi(\pp,t) \Def \begin{cases} \ln \phi(\pp,t) &\text{if $\phi(\pp,t)>0$} \\
-\infty &\text{if $\phi(\pp,t)=0$} \end{cases}\end{equation*}

Defining, for convenience $e^{-\infty}\Def 0$; we know that
\begin{equation}\label{E:unity} \int_{t\in \TInt}e^{\psi(\pp,t)}\mu^\pp_{0,0}(dt)= \xi(\pp)(B)=1. \end{equation}
for $U$-a.e. $\pp\in \PP$.  We also note that
\begin{equation*} \bar H^\ind(\nu^*) = \int_{(\pp,t)\in \PT}\psi(\pp,t)\nu^*(d\pp,dt)-\int_{\pp\in \Types}\lb \log \int_{t\in \TInt}e^{\psi(\pp,t)}\mu^{\pp}_{0,0}(dt)\rb U(d\pp). \end{equation*}

Fix some $\mu^\circ\in \Pspace(\TInt)$.  For $\Psi\in C(\PT;\R)$, define now the stochastic kernel
\begin{equation*} \Xi_\Psi(\pp)(B)\Def \begin{cases} \frac{\int_{t\in B}\exp\left[\Psi(\pp,t)\right]\mu^\pp_{0,0}(dt)}{\int_{t\in \TInt}\exp\left[\Psi(\pp,t)\right]\mu^\pp_{0,0}(dt)} &\text{if $\int_{t\in \TInt}\exp\left[\Psi(\pp,t)\right]\mu^\pp_{0,0}(dt)>0$} \\
\mu^\circ(B) &\text{if $\int_{t\in \TInt}\exp\left[\Psi(\pp,t)\right]\mu^\pp_{0,0}(dt)=0$} \end{cases} \qquad B\in \Borel(\TInt) \end{equation*}

Let's now approximate, keeping in mind Lemma \ref{L:Continuity}, \eqref{E:RND}, and the fact that $x\mapsto e^{-x}$ is continuous on $[-\infty,\infty)$.  We then have that there is a $\hat \psi\in C(\PT;\R)$ such that
\begin{equation*} \left|\bar H^\ind(\nu^*) - \int_{(\pp,t)\in \PT}\hat \psi(\pp,t)\nu^*(d\pp,dt)-\int_{\pp\in \Types}\lb \log \int_{t\in \TInt}e^{\hat \psi(\pp,t)}\mu^{\pp}_{0,0}(dt)\rb U(d\pp)\right|<\eta/3 \end{equation*}
and such that the measure $\hat \nu\Def \Xi_{\hat \psi}\otimes U$ is in $G$.

Let's now define
\begin{equation}\label{E:SDef} S_N \Def \int_{(\pp,t)\in \PT}\hat \psi(\pp,t)\bnu^{\ind,N}(d\pp,dt)-\int_{\pp\in \Types}\lb \log \int_{t\in \TInt}e^{\hat \psi(\pp,t)}\mu^{\pp}_{0,0}(dt)\rb U_N(d\pp). \end{equation}
Then $\BE\left[\exp\left[NS_N\right]\right]=1$, and we thus define
\begin{equation*} \hat \BP_N(A)\Def \BE\left[\chi_A \exp\left[N S_N\right]\right]\qquad A\in \filt \end{equation*}

The next step is to reduce $G$.  Let $B\subset \PT$ be an open subset of $G$
which contains $\hat \nu$ and such that
\begin{equation*} \left|\int_{(\pp,t)\in \PT}\hat \psi(\pp,t)\nu(d\pp,dt)-\int_{(\pp,t)\in \PT} \hat \psi(\pp,t)\nu^*(d\pp,dt)\right|<\eta/2 \end{equation*}
for all $\nu\in B$.  Thus
\begin{equation*} \BP\{\bnu^{\ind,N}\in G\} \ge \BP\{\bnu^{\ind,N}\in B\}
= \hat \BE_N\left[\chi_{\{\bnu^{\ind,N}\in B\}}\exp\left[-NS_N\right]\right]. \end{equation*}

Thanks to Lemma \ref{L:Continuity}, the map
\begin{equation*} \pp\mapsto \int_{t\in \TInt}e^{\hat \psi(\pp,t)}\mu^{\pp}_{0,0}(dt) \end{equation*}
is in $C(\PP;\R)$.  Thus we can find an $N^*\in \N$ such that
\begin{equation*} \left| \int_{\pp\in \Types}\lb \log \int_{t\in \TInt}e^{\hat \psi(\pp,t)}\mu^{\pp}_{0,0}(dt)\rb U(d\pp) - \int_{\pp\in \Types}\lb \log \int_{t\in \TInt}e^{\hat \psi(\pp,t)}\mu^{\pp}_{0,0}(dt)\rb U_N(d\pp)\right|\le \eta/2 \end{equation*}
if $N\ge N^*$.  Combining things together, we have that
\begin{equation*} \left|\int_{(\pp,t)\in \PT}\hat \psi(\pp,t)\bnu^{\ind,N}(d\pp,dt)-\int_{\pp\in \Types}\lb \log \int_{t\in \TInt}e^{\hat \psi(\pp,t)}\mu^{\pp}_{0,0}(dt)\rb U_N(d\pp)-\bar H^\ind(\nu^{*})\right|<\eta
\end{equation*}
if $\bnu^{\ind,N}\in B$ and $N\ge N^*$.

Assume now that $N\ge N^*$.  Then
\begin{align*} \BP\{\bnu^{\ind,N}\in G\} &\ge \BP\{\bnu^{\ind,N}\in B\}
= \hat \BE_N\left[\chi_{\{\bnu^{\ind,N}\in B\}}\exp\left[-NS_N\right]\right]\\
&\ge \hat \BP_N\{\bnu^{\ind,N}\in B\}\exp\left[-N\{\bar H^\ind(\nu^*)+\eta\}\right]. \end{align*}

To finish the proof, let's show that
\begin{equation}\label{E:lowergoal} \varliminf_{N\to \infty}\hat \BP_N\{\bnu^{\ind,N}\in B\}>0. \end{equation}

To do so, let's construct a metric on $\Pspace(\PT)$.  Define $d(x) \Def |x|/(1+|x|)$;
then the map $(x,y)\mapsto d(x-y)$ is a metric on $\R$.  Let $\{f_n\}_{n\in\N}$
be a countable and dense subset of $C(\PT;\R)$, and define
\begin{equation}\label{E:metric} \rho(\nu_1,\nu_2) \Def \sum_{n\in \N}2^{-n}d\left(\int_{(\pp,t)\in \PT}f_n(\pp,t)\nu_1(d\pp,dt)-\int_{(\pp,t)\in \PT}f_n(\pp,t)\nu_2(d\pp,dt)\right) \end{equation}
for all $\nu_1$ and $\nu_2$ in $\Pspace(\PT)$.  Then $\rho$ is a metric on $\Pspace(\PT)$.  Since $\hat \nu\in B$, there is a $\delta_{\eqref{E:inclusion}}>0$ small enough such that
\begin{equation}\label{E:inclusion} \lb \nu'\in \Pspace(\PT):\, \rho(\nu',\hat \nu)<\delta_{\eqref{E:inclusion}}\rb \subset B. \end{equation}

Let's now understand the statistics of $\nu^{\ind,N}$ under $\hat \BP_N$.
From the structure \eqref{E:SDef} of $S_N$, we see that, under $\hat \BP_N$,
$\{\tau^{\ind,N}_n\}_{n=1}^N$ are independent and $\tau^N_n$ has law $\Xi_{\hat \psi}(\pp^N_n)$.

Fix $f\in C(\PT;\R)$.  Let's write
\begin{align*} &\left|\int_{(\pp,t)\in \PT}f(\pp,t)\bnu^{\ind,N}(d\pp,dt)-\int_{(\pp,t)\in \PT}f(\pp,t)\hat \nu(d\pp,dt)\right| \\
&\qquad \le \left|\int_{(\pp,t)\in \PT}f(\pp,t)\bnu^{\ind,N}(d\pp,dt)-\int_{(\pp,t)\in \PT}f(\pp,t)(\Xi_{\hat \psi}\otimes U_N)(d\pp,dt)\right|\\
&\qquad \qquad + \left|\int_{(\pp,t)\in \PT}f(\pp,t)(\Xi_{\hat \psi}\otimes U_N)(d\pp,dt)-\int_{(\pp,t)\in \PT}f(\pp,t)(\Xi_{\hat \psi}\otimes U)(d\pp,dt)\right|. \end{align*}
Keeping Lemma \ref{L:Continuity} in mind, we have that
\begin{equation*} \lim_{N\to \infty}\left|\int_{(\pp,t)\in \PT}f(\pp,t)(\Xi_{\hat \psi}\otimes U_N)(d\pp,dt)-\int_{(\pp,t)\in \PT}f(\pp,t)(\Xi_{\hat \psi}\otimes U)(d\pp,dt)\right|=0. \end{equation*}
Let's next use Chebychev's inequality; we have arranged things to take
advantage of the fact that
\begin{equation*} \hat \BE_N\left[f(\pp^N_n,\sigma^N_n)\right] = \int_{t\in \TInt}f(\pp^N_n,t)\Xi_{\hat \psi}^{\pp^N_n}(dt) \end{equation*}
for all $N$ and $n$.
We then calculate that
\begin{align*} &\hat \BE_N\left[\left|\int_{(\pp,t)\in \PT}f(\pp,t)\bnu^{\ind,N}(d\pp,dt)-\int_{(\pp,t)\in \PT}f(\pp,t)(\Xi_{\hat \psi}\otimes U_N)(d\pp,dt)\right|^2\right]\\
&\qquad = \hat \BE_N\left[\left|\frac1N\sum_{n=1}^N \lb f(\pp^N_n,\sigma^N_n)-\int_{t\in \TInt}f(\pp^N_n,t)\Xi_{\hat \psi}(\pp^N_n)(dt)\rb\right|^2\right] \\
&\qquad = \frac{1}{N^2}\sum_{n=1}^N \hat \BE_N\left[\lb f(\pp^N_n,\sigma^N_n)-\int_{t\in \TInt}f(\pp^N_n,t)\Xi_{\hat \psi}(\pp^N_n)(dt)\rb^2\right] \\
&\qquad \le \frac{4}{N}\sup_{(\pp,t)\in \PT}|f(\pp,t)|^2.
\end{align*}

Let's collect things together.  For each $f\in C(\PT;\R)$, we have that
\begin{equation*} \lim_{N\to \infty}\hat \BE_N\left[\left|\int_{(\pp,t)\in \PT}f(\pp,t)\bnu^{\ind,N}(d\pp,dt)-\int_{(\pp,t)\in \PT}f(\pp,t)\hat \nu(d\pp,dt)\right|\right]=0. \end{equation*}
The structure of \eqref{E:metric} then implies that
\begin{equation*} \lim_{N\to \infty}\hat \BP_N\lb \rho(\bnu^{\ind,N},\hat \nu)>\delta_{\eqref{E:inclusion}}\rb =0. \end{equation*}
This then implies \eqref{E:lowergoal}, finishing the proof.
\end{proof}

\subsection{Proof of Theorem \ref{T:LDPHeterogeneous1}}\label{SS:LDPDependentHeterogeneous}

In this section we prove Theorem \ref{T:LDPHeterogeneous1}, using Varadhan's integral lemma \cite{Varadhan1966} as it appears in Theorem II.7.2 of \cite{Ellis}.
\begin{theorem}[Varadhan's integral lemma]\label{T:TransferResult2}
Let $S$ be a Polish space.  Suppose that $\{\xi^N\}_{N\in\N}$ is a sequence of $S$-valued random variables with large deviations principle with rate function $I$.
Let $\{\tilde \xi^N\}_{N\in \N}$ be another sequence of $S$-valued random variables and assume that there is a continuous function $G:S\to \R$ such that
\begin{equation*} \BP\{\tilde \xi_N\in A\} = \BE\left[\chi_A(\xi_N)\exp\left[NG(\xi_N)\right]\right] \end{equation*}
for all $A\in \Borel(S)$.  If
\begin{equation*} \lim_{\alpha\rightarrow\infty}\varlimsup_{N\rightarrow\infty}\frac{1}{N}\ln \BE\left[\chi_{[\alpha,\infty)}\left(G(\xi^N)\right)e^{N G(\xi^{N})}\right]=-\infty
\end{equation*}
then $\{\tilde \xi^N\}_{N\in \N}$ has a large deviation principle with good rate function $I-G$.
\end{theorem}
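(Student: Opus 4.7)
The plan is to establish the LDP for $\{\tilde{\xi}^N\}$ by separately proving the upper bound on closed sets, the lower bound on open sets, and compactness of the level sets of $I-G$, reducing everything to the given LDP for $\{\xi^N\}$ via truncation against a threshold $\alpha$ and the continuity of $G$.

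For the upper bound on a closed $F\subset S$, the first step is to split
\begin{equation*}
\BP\{\tilde{\xi}^N\in F\} \leq \BE[\chi_F(\xi^N)\chi_{\{G(\xi^N)\leq\alpha\}}e^{NG(\xi^N)}] + \BE[\chi_{\{G(\xi^N)>\alpha\}}e^{NG(\xi^N)}].
\end{equation*}
The second term is directly controlled by the tail hypothesis. In the first term, bound $e^{NG(\xi^N)}\leq e^{N(G\wedge\alpha)(\xi^N)}$ on the event $\{G(\xi^N)\leq\alpha\}$; since $G\wedge\alpha$ is bounded and continuous, a standard consequence of the LDP upper bound for $\xi^N$ (partition the range of $G\wedge\alpha$ into thin slabs, apply the closed-set upper bound on each piece, and sum) yields
\begin{equation*}
\varlimsup_{N\to\infty}\frac{1}{N}\log\BE[\chi_F(\xi^N)e^{N(G\wedge\alpha)(\xi^N)}] \leq -\inf_{x\in F}[I(x)-G(x)].
\end{equation*}
Taking $\varlimsup_N$ first and then letting $\alpha\to\infty$ absorbs the second term via the tail hypothesis and produces the closed-set upper bound.

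For the lower bound on an open $U\subset S$, fix $x_0\in U$ with $I(x_0)<\infty$ and $\eps>0$. Continuity of $G$ at $x_0$ provides an open neighborhood $V\subset U$ on which $G>G(x_0)-\eps$, and then
\begin{equation*}
\BP\{\tilde{\xi}^N\in U\} \geq \BE[\chi_V(\xi^N)e^{NG(\xi^N)}] \geq e^{N(G(x_0)-\eps)}\BP\{\xi^N\in V\}.
\end{equation*}
The LDP lower bound for $\xi^N$ applied to the open $V$ gives $\varliminf_N N^{-1}\log\BP\{\xi^N\in V\}\geq -I(x_0)$. Sending $\eps\to 0$ and optimizing over $x_0\in U$ yields the open-set lower bound $-\inf_{x\in U}[I(x)-G(x)]$.

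For compactness of the level sets of $I-G$, I would apply the just-proved upper bound to the closed sets $F_\alpha=\{G\geq\alpha\}$ (closed by continuity of $G$); combined with the tail hypothesis this forces $\inf_{x\in F_\alpha}[I(x)-G(x)]\to\infty$ as $\alpha\to\infty$. Hence for each $\ell$ there is $\alpha_\ell$ with $\{I-G\leq\ell\}\subset\{G<\alpha_\ell\}$, so $\{I-G\leq\ell\}\subset\{I\leq\ell+\alpha_\ell\}$, which is precompact since $I$ has compact level sets; lower semicontinuity of $I-G$ (as $I$ is lower semicontinuous and $G$ is continuous) makes the level set closed, and the inclusion in a compact set then gives compactness. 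The main obstacle I anticipate is the careful bookkeeping in the upper bound: one must ensure that the truncation error is absorbed uniformly in $N$ by taking $\varlimsup_N$ before $\alpha\to\infty$, which is exactly the order prescribed by the precise form of the tail hypothesis.
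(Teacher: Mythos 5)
The paper does not actually prove this statement -- it is quoted as Varadhan's integral lemma, citing Theorem II.7.2 of Ellis -- so there is no internal proof to compare against; your argument is the standard exponential-tilting proof. Your closed-set upper bound (truncate at $\alpha$, dominate by $e^{N(G\wedge\alpha)}$, use the Laplace-type upper bound via slabs for the bounded continuous function $G\wedge\alpha$, and crucially take $\varlimsup_N$ before $\alpha\to\infty$) and your open-set lower bound (tilt back on a neighborhood where $G>G(x_0)-\eps$) are both correct as written. One point you leave implicit: taking $A=S$ in the hypothesis forces $\BE\left[e^{NG(\xi_N)}\right]=1$, which is what guarantees $\inf_S[I-G]=0$ and hence that $I-G$ is nonnegative, as a rate function in the sense of Definition \ref{Def:LDP} must be.

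Your goodness argument, however, contains a direction error. The upper bound applied to $F_\alpha=\{G\ge\alpha\}$ gives $\varlimsup_{N}\frac1N\log\BP\{\tilde\xi^N\in F_\alpha\}\le-\inf_{F_\alpha}[I-G]$, while the tail hypothesis gives $\varlimsup_{N}\frac1N\log\BP\{\tilde\xi^N\in F_\alpha\}\le M(\alpha)$ with $M(\alpha)\to-\infty$; these are two upper bounds on the same quantity and cannot be combined to conclude $\inf_{F_\alpha}[I-G]\to\infty$ -- for that you need a \emph{lower} bound on the probability in terms of the rate function. The correct deduction uses the LDP lower bound for $\xi^N$ on the open set $\{G>\alpha\}$: for $x$ with $G(x)>\alpha$ and $\eps>0$, choose an open $V\subset\{G>\alpha\}$ containing $x$ on which $G>G(x)-\eps$; then $\BE\left[\chi_{[\alpha,\infty)}(G(\xi^N))e^{NG(\xi^N)}\right]\ge e^{N(G(x)-\eps)}\BP\{\xi^N\in V\}$, and the lower bound for $\xi^N$ yields $I(x)-G(x)\ge-M(\alpha)-\eps$, whence $\inf_{\{G>\alpha\}}[I-G]\ge-M(\alpha)\to\infty$. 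Since $\{G\ge\alpha+1\}\subset\{G>\alpha\}$, your subsequent chain $\{I-G\le\ell\}\subset\{G<\alpha_\ell\}\subset\{I\le\ell+\alpha_\ell\}$, together with lower semicontinuity of $I-G$ and compactness of the level sets of $I$, then goes through verbatim. With this one local repair the proof is complete and coincides with the standard textbook argument.
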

\noindent
In our case, we take $S\Def \Pspace(\PT)$ and $\xi_N\Def \bnu^{\ind,N}$.
Lemma \ref{L:DensityFcn} allows us to define the Radon-Nikodym derivative $e^{NG}$.

For $\pp\in \Types$, define
\begin{align*}
\dot b^\pp(t) &= 1-\frac12 \sigma^2 \left(b^\pp(t)\right)^2 - \alpha b^\pp(t) \qquad t>0\\
b^\pp(0)&=0. \end{align*}

Note that since $\beta^S=0$ then  $\theta^{{\pp}}_{t}(s)=b^{{\pp}}(s)$ for all $0\leq s\leq t\leq T$, where $\theta^{{\pp}}_{t}(s)$ is defined in \eqref{E:DuffiePanSingleton2}.  It is easy to see that $b^{\pp}$ and $\dot b^{\pp}$ are uniformly bounded in $\Types\times [0,T]$.  Using \eqref{Eq:ProbDistribution} and Lemma \ref{L:DensityFcn} we can fairly easily compute the ratio of densities of $\tau^N_n$
and $\tau^{\ind,N}_n$ and consequently of $\bnu^N$ and $\bnu^{\ind,N}$.  For $\pp\in \Types$, $r\in [0,T]$, and $\nu\in \Pspace(\Types\times [0,T])$, define
\begin{align*}
g_{\nu}(\pp,r)\Def \log\frac{d\mu^{{\pp}}_{\bar{\nu},0}([0,r])}{d\mu^{{\pp}}_{0,0}([0,r])}&=\log\lb \dot b^{\pp}(r)\lambda_\circ+\alpha \bar \lambda b^\pp(r)
+\beta^C\int_{0}^{r}\dot b^{\pp}(r-u)\nu(\PP,du)\right)\\
&\qquad -\log\left(\dot b^{\pp}(r)\lambda_\circ+\alpha \bar \lambda b^\pp(r)\right)-\beta^C\int_{0}^{r}b^{\pp}(r-u)\nu(\PP,du)\end{align*}
For $\pp\in \Types$, let's also define
\begin{align*}
g_{\nu}(\pp,\pt) &= \beta^C\int_{0}^{T}b^{\pp}(T-u)\nu(\PP,du)\end{align*}

For $\nu\in \Pspace(\PT)$, let's finally define
\begin{equation*} G(\nu) \Def \int_{(\pp,t)\in \PT}g_{\nu}(\pp,t)\nu(d\pp,dt).\end{equation*}
Then
\begin{equation*} \BP\lb \bnu^N\in A\rb
= \BE\left[\chi_A(\bnu^{\ind,N})\exp\left[NG(\bnu^{\ind,N})\right]\right].\end{equation*}

Theorem \ref{T:LDPHeterogeneous1} follows by Theorem \ref{T:LDPmeasuresHeter}  and contraction principle.
\begin{theorem}\label{T:LDPmeasuresHeter}
The family $(\bnu^N)_{N\in\N}$ of \eqref{E:nudef} satisfies the large deviations principle with rate function $\bar{H}(\nu):\Pspace({\Types}\times \TInt) \mapsto[0,\infty]$.
\end{theorem}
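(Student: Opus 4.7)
The plan is to apply Varadhan's integral lemma, Theorem \ref{T:TransferResult2}, with $S=\Pspace(\PT)$, $\xi^N\Def \bnu^{\ind,N}$, and $\tilde\xi^N\Def \bnu^N$. The independent baseline LDP for $\xi^N$ is already available through Theorem \ref{T:LDPmeasuresIndependentHeterogeneous}, so the work is to verify the three ingredients of the transfer lemma: the Radon--Nikodym identity, continuity of $G$, and the tail/integrability condition; and then to identify $\bar H^{\ind}-G$ with $\bar H$.

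First I would verify the Radon--Nikodym identity $\BP\{\bnu^N\in A\}=\BE[\chi_A(\bnu^{\ind,N})\exp(NG(\bnu^{\ind,N}))]$. Using Lemma \ref{L:DensityFcn} together with \eqref{Eq:ProbDistribution}, the density of $\tau^{N}_{n}$ conditioned on the state of the pool admits the representation $f_{\bar\nu^N,0}^{\pp^\NN}$ when $\eps_N=0$, while the density of $\tau^{\ind,N}_{n}$ is $f_{0,0}^{\pp^\NN}$. Taking the ratio name-by-name and combining the $\{\tau_n\le T\}$ and $\{\tau_n>T\}$ contributions produces exactly the kernel $g_\nu(\pp,t)$ that is introduced before the statement (the $\pt$-piece absorbing the cumulative survival exponent $\beta^C\int_0^T b^{\pp}(T-u)\bar\nu(du)$). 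Multiplying over $n=1,\dots,N$ and re-expressing in terms of the empirical measure yields the factor $\exp[NG(\bnu^{\ind,N})]$.

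Next I would check continuity of $G$ on $\Pspace(\PT)$ and the tail condition. Both $b^{\pp}$ and $\dot b^{\pp}$ are uniformly bounded on $\Types\times[0,T]$ (they solve a Riccati-type ODE with coefficients controlled by $\KK_{\ref{A:Bounded}}$), hence $g_\nu$ is uniformly bounded in $(\pp,t,\nu)$ by a deterministic constant $C=C(\KK_{\ref{A:Bounded}},T)$. Continuity of $(\nu,\pp,t)\mapsto g_\nu(\pp,t)$ is then immediate from the continuity in $\nu$ of integrals against the continuous, bounded functions $b^{\pp}(r-\cdot)$ and $\dot b^{\pp}(r-\cdot)$. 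Since $\nu\mapsto G(\nu)$ is a continuous bounded bilinear-type pairing of $\nu$ with the continuous kernel $g_\nu$, continuity on the Polish space $\Pspace(\PT)$ follows. The uniform bound $|G|\le C$ makes the exponential tail condition of Theorem \ref{T:TransferResult2} trivial, since $\chi_{[\alpha,\infty)}(G(\xi^N))\equiv 0$ for $\alpha>C$.

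Finally I would identify the rate function. By the chain rule for relative entropy, whenever $\xi(\pp)\ll\mu^{\pp}_{0,0}$,
\begin{equation*}
H\bigl(\xi(\pp),\mu^{\pp}_{\bar\nu,0}\bigr)
=H\bigl(\xi(\pp),\mu^{\pp}_{0,0}\bigr)
-\int_{\TInt}\log\frac{d\mu^{\pp}_{\bar\nu,0}}{d\mu^{\pp}_{0,0}}(t)\,\xi(\pp)(dt),
\end{equation*}
and the integrand in the right-most term is precisely $g_{\nu}(\pp,t)$ by the definition of $g_\nu$ given before the statement. Integrating against $U(d\pp)$ and using Fubini yields $\bar H(\nu)=\bar H^{\ind}(\nu)-G(\nu)$ on the domain where $\partial\nu/\partial U$ exists; the identity extends trivially to the complement, since both sides are $+\infty$ there (note $G$ is bounded). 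Invoking Theorem \ref{T:TransferResult2} then delivers the LDP for $\bnu^N$ at speed $N$ with good rate function $\bar H$. The main obstacle I anticipate is the careful bookkeeping in Step 1, identifying the density ratio of $\bnu^N$ versus $\bnu^{\ind,N}$ on both the default part and the survival atom $\pt$, and checking that the self-consistency of the feedback term (which uses $\bar\nu^N$ rather than the deterministic $\bar\nu^*$) is reflected correctly in the definition of $g_\nu$; once this is done, Step 2 is routine boundedness, Step 3 reduces to the classical identity for relative entropy under a change of reference measure, and the contraction principle applied to $\nu\mapsto \nu(\Types\times[0,T])$ then yields Theorem \ref{T:LDPHeterogeneous1}.
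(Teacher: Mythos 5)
Your proposal follows exactly the paper's route: apply Varadhan's integral lemma with $\xi^N=\bnu^{\ind,N}$ and the density factor $\exp[NG(\bnu^{\ind,N})]$, check boundedness and continuity of $G$ so the tail condition is vacuous, and identify $\bar H^{\ind}-G=\bar H$ via the chain rule for relative entropy under a change of reference measure. The paper states the Radon--Nikodym identity without further comment and performs the same entropy computation in the reverse direction, so the two proofs are essentially identical.
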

\begin{proof}
Note that $G:\Pspace(\PT)\to \R$ is continuous in the weak topology and bounded.  Thus
\begin{equation*}
\lim_{\alpha\rightarrow\infty}\varlimsup_{N\rightarrow\infty}\frac{1}{N}\log\BE\left[\chi_{[\alpha,\infty)}\left(G(\bnu^{\ind,N})\right)\exp\left[N G(\bnu^{\ind,N})\right]\right]=-\infty
\end{equation*}
holds.  Thus $\{\bnu^N, N\in\N\}$ has a large deviations principle with rate function
$\bar H\Def \bar H^{\ind}-G$.  Fix $\nu\in \Pspace(\PT)$.
If $\frac{\partial \nu}{\partial U}$ does not exist, then $\bar H^{\ind}(\nu)=\infty$ so $\bar H(\nu)=\infty$.
If $\frac{\partial \nu}{\partial U}$ exists, then
\begin{align*}
\bar H (\nu)
&=\int_{\pp\in \Types}\lb \int_{t\in \TInt}\ln \frac{\tfrac{\partial \nu}{\partial U}(\pp)}{d \mu^{\pp}_{0,0}}(t) \frac{d\nu}{dU}(\pp, dt)\rb U(d\pp)-\int_{(\pp,t)\in \PT}\ln\frac{d\mu^{\pp}_{\bar \nu,0}(s)}{d\mu^{\pp}_{0,0}(s)}\nu(d\pp,ds)\\
&=\int_{\pp\in \Types}\lb \int_{t\in \TInt}\lb \ln \frac{\tfrac{\partial \nu}{\partial U}(\pp)}{d \mu^{\pp}_{0,0}}(t) - \ln\frac{d\mu^{\pp}_{\bar \nu,0}(s)}{d\mu^{\pp}_{0,0}(s)}\rb \frac{\partial \nu}{\partial U}(\pp,ds)\rb U(d\pp)\\
&=\int_{\pp\in \Types}\lb \int_{t\in \TInt}\ln \frac{\tfrac{\partial \nu}{\partial U}(\pp)}{d \mu^{\pp}_{\nu,0}}(t) \frac{d\nu}{dU}(\pp, dt)\rb U(d\pp)\\
&= \int_{\pp \in \Types}H\left(\frac{d\nu}{dU}(\pp),\mu^{\pp}_{\bar \nu,0}\right)U(d\pp)
\end{align*}
\end{proof}

We conclude with the proof of Corollary \ref{C:LDPgivingLLNHeter}.
\begin{proof}[Proof of Corollary \ref{C:LDPgivingLLNHeter}]
Clearly, it is enough to consider the case $\bar{H}(\nu)<\infty$.  $H\left(\cdot,\cdot\right)$ is relative entropy, which implies that it is a convex and lower semicontiunuous function with respect to both arguments and strictly convex as a function of the first argument for each fixed second argument. Thus $\bar{H}(\nu)=0$ implies that  $H\left(\frac{d \nu}{d U}({\pp}),\mu_{\bar{\nu},0}^{{\pp}}\right)=0$, which  due to the relative entropy nature of $H$, it is true if  $\frac{\partial \nu}{\partial U}({\pp})=\mu_{\bar{\nu},0}^{{\pp}}$ $U-$a.e. This implies the relation
\[
\nu({\Types}, B)=\int_{{\Types}\times B} \mu^{{\pp}}_{\bar{\nu},0}(ds) U(d{\pp}),\textrm{ for all } B\in\mathcal{B}(\TInt)
\]
which by \eqref{Eq:ProbDistribution} and Lemma \ref{L:DensityFcn}, translates to the equation
\begin{align}
\bar{\nu}([0,t])&=\int_{{\Types}}\left[1-\BE\left[e^{-\int_{0}^{t}\lambda^{\bar{\nu},0}_{s}({\pp})ds}\right]\right]U(d{\pp})\nonumber\\
&=1-\int_{{\Types}}e^{-\left(b^{{\pp}}(t)\lambda_{0}+\int_{0}^{t}\alpha\bar{\lambda}b^{{\pp}}(t-u)du+\beta^{C}\int_{0}^{t}b^{{\pp}}(t-u)\bar{\nu}(du)\right)}U(d{\pp})\label{Eq:LLNequation}
\end{align}
where $\lambda^{\nu,0}({\pp})$ satisfies \eqref{E:DuffiePanSingletonequation} with $\varphi(t)=\bar{\nu}([0,t])$ and $\psi(t)=0$.  Let us next argue that this equation has a unique solution which coincides with $L_{t}$, the limit in probability of the empirical loss $L^{N}_{t}$ as $N\rightarrow\infty$.

By Lemma 4.1 of \cite{GSS2013} we get that
\begin{equation}
L_{t}=1-\bmu_t( \PP)=1-\int_{\PP}\BE\left[e^{-\int_{0}^{t}\lambda^{\ast}_{s}(\pp)ds}\right]U(d\pp)\label{Eq:LLN}
\end{equation}
where for each $\pp\in\PP$, there is a unique pair
$\{(Q(t),\lambda_{t}^{*}( \pp)):t\in[0,T]\}$ taking values in
$\R_+\times\R_{+}$ such that
\begin{equation*}\label{E:bQDef} \begin{aligned} Q(t) &= \int_{\PP} \BE\left\{\lambda^{*}_{t}(\pp)\exp\left[-\int_{0}^t \lambda_s^*( \pp)ds\right]
\right\}U(d\pp).
\end{aligned}\end{equation*} and
\begin{equation*}
\lambda^*_t(\pp) = \lambda_\circ - \alpha\int_{0}^t (\lambda^*_s(\pp)-\bar \lambda)ds + \sigma\int_{0}^t\sqrt{\lambda^*_s(\pp)}dW^*_s +\beta^{C} \int_{0}^t Q(s) ds. \label{E:EffectiveEquation1}
\end{equation*}
Then, using Lemma \ref{L:DensityFcn}, one easily computes from (\ref{Eq:LLN}) that $L_{t}=\bar{\nu}([0,t])$, the unique solution of (\ref{Eq:LLNequation}).
\end{proof}

\subsection{Proof of Theorem \ref{T:LDPHeterogeneous2}}\label{SS:LDPDependentHeterogeneous2} In this Subsection, we consider Theorem \ref{T:LDPHeterogeneous2}.
The proof is a direct consequence of (a): Theorem \ref{T:LDPmeasuresHeter} and (b): the LDP for the $X^{N}_{t}=\eps_{N}X_{t}$ process, Lemma \ref{L:LDP_X}.
Thus, we only sketch the main arguments.

Firstly, we notice that an immediate consequence of Theorem \ref{T:LDPmeasuresHeter}, is the following conditional LDP.

\begin{lemma}\label{L:LDP1}
Consider the system defined in \eqref{E:main} with $\eps_{N}=1$. Under Assumptions \ref{A:Bounded} and \ref{A:regularity}, and given a path $t\mapsto X_{t}$ in $C_{c}\left([0,T];\R\right)$, the family $\{\bnu^{N},N\in\mathbb{N}\}$ satisfies the conditional large deviation principle with rate function $\bar{H}(\nu;\PT):\Pspace({\Types}\times \TInt)\mapsto[0,\infty]$ and speed $N$, where

\[
\bar{H}(\nu;\PT)=\begin{cases}
\int_{{\Types}}H\left(\frac{\partial\nu}{\partial U}({\pp}),\mu^{{\pp}}_{\bar{\nu},\PT}\right)U(d{\pp}), & \text{ if $\frac{\partial\nu}{\partial U}({\pp})$ exists}\\
+\infty, & otherwise.
\end{cases}
\]
\end{lemma}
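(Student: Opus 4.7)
The plan is to carry out, under the conditioning on $X_\cdot = \psi$ as a fixed deterministic continuous path, exactly the two-step argument that established Theorem \ref{T:LDPmeasuresHeter}; essentially only the reference measure $\mu^{\pp}_{0,0}$ needs to be replaced throughout by $\mu^{\pp}_{0,\psi}$ and $\mu^{\pp}_{\bar\nu,0}$ by $\mu^{\pp}_{\bar\nu,\psi}$. Since $X$ is frozen, the only remaining sources of randomness are the independent Brownian motions $W^n$ and the independent exponentials $\ee_n$, so the structure of the problem (defaults given by a CIR-type intensity augmented by the feedback term $\beta^{C}dL^N_t$) is preserved.

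First I would prove the conditional LDP in the independent case, i.e.\ when $\beta^C_{N,n}=0$ for all $n$. Conditionally on $\psi$, the independent default times $\tau^{\ind,N}_n$ have laws $\mu^{\pp^{N,n}}_{0,\psi}$, so the arguments of Lemma \ref{L:RateFunctionEquivalentCharacterization}, Lemma \ref{L:LDPUpperBoundClosed} and Lemma \ref{L:LDPindependentLowerBound} carry over verbatim, once I verify that the map $\pp\mapsto \mu^{\pp}_{0,\psi}$ is continuous (the analogue of Lemma \ref{L:Continuity}). This continuity follows from the closed-form representation in Lemma \ref{L:DensityFcn} together with continuous dependence of the ODE \eqref{E:DuffiePanSingleton2} on the parameter $\pp$; continuity of $\psi$ and Assumption \ref{A:Bounded} give the required uniform control of $\theta^{\pp}_t$ and $\Gamma^{\pp}_{0,\psi}$. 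The outcome is an LDP for the independent conditional empirical measure with rate
\[
\bar H^{\ind}(\nu;\psi) \;=\; \int_{\Types} H\!\left(\tfrac{\partial\nu}{\partial U}(\pp),\,\mu^{\pp}_{0,\psi}\right) U(d\pp).
\]

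Second, I would add the contagion term $\beta^{C}dL^N_t$ by invoking Varadhan's transfer lemma (Theorem \ref{T:TransferResult2}) exactly as in the proof of Theorem \ref{T:LDPmeasuresHeter}. Using Lemma \ref{L:DensityFcn}, the Radon–Nikodym derivative of the joint law of $\bnu^N$ with respect to that of $\bnu^{\ind,N}$ (both conditional on $\psi$) takes the form $\exp[N G_\psi(\bnu^{\ind,N})]$, where
\[
G_\psi(\nu) \;=\; \int_{(\pp,t)\in\PT} \log\frac{d\mu^{\pp}_{\bar\nu,\psi}}{d\mu^{\pp}_{0,\psi}}(t)\,\nu(d\pp,dt),
\]
evaluated via the same $\theta^{\pp}_{t}(\cdot)$ and $\Gamma^{\pp}_{\bar\nu,\psi}$ used in Lemma \ref{L:DensityFcn}. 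Under Assumption \ref{A:Bounded} and boundedness of $\psi$ on $[0,T]$, $G_\psi$ is continuous and bounded on $\Pspace(\PT)$, so the integrability hypothesis of Theorem \ref{T:TransferResult2} holds trivially. Varadhan's lemma then gives the conditional LDP with rate $\bar H^{\ind}(\nu;\psi)-G_\psi(\nu)$, and the same algebraic manipulation used in the last display of the proof of Theorem \ref{T:LDPmeasuresHeter} (chain rule for Radon–Nikodym derivatives applied inside the entropy) rewrites this as $\int_{\Types} H(\partial\nu/\partial U(\pp),\mu^{\pp}_{\bar\nu,\psi}) U(d\pp)=\bar H(\nu;\psi)$, as required.

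The only genuinely delicate step is the uniform continuity and boundedness of $\pp\mapsto \mu^{\pp}_{0,\psi}$ and $\nu\mapsto G_\psi(\nu)$ in the presence of the multiplicative systematic term $\beta^S\lambda\,d\psi$; this is where the hypothesis that $\psi$ lies in $C_c([0,T];\R)$ enters, ensuring that $\int_0^t \theta^{\pp}_t(t-r)\,d\psi(r)$ is well defined and uniformly bounded in $(\pp,t)\in\Types\times[0,T]$. Once this quantitative control is in place, the rest of the argument is formally identical to that of Theorem \ref{T:LDPmeasuresHeter}, and I would simply indicate this rather than reproduce the proofs.
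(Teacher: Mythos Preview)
Your proposal is correct and aligns with the paper's approach: the paper simply states that Lemma \ref{L:LDP1} is ``an immediate consequence of Theorem \ref{T:LDPmeasuresHeter}'', and you have correctly spelled out why---conditioning on a fixed path $X_\cdot=\psi$ leaves the structure of the proof of Theorem \ref{T:LDPmeasuresHeter} intact, with $\mu^{\pp}_{0,0}$ and $\mu^{\pp}_{\bar\nu,0}$ replaced by $\mu^{\pp}_{0,\psi}$ and $\mu^{\pp}_{\bar\nu,\psi}$, and you have identified the one genuinely new verification needed (continuity and boundedness via Lemma \ref{L:DensityFcn} in the presence of the $\beta^S\lambda\,d\psi$ term).
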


Secondly, we notice that under Assumptions \ref{A:CoefficientsX}, \ref{A:CoefficientsX2} and \ref{A:CoefficientsX3} the family $\{X^{N}=\eps_{N}X, N\in\N\}$ satisfies a large deviations principle as given by Lemma \ref{L:LDP_X}.
\begin{lemma}\label{L:LDP_X}
Let $X^{N}=\eps_{N}X$ and assume that Assumptions \ref{A:CoefficientsX}, \ref{A:CoefficientsX2} and \ref{A:CoefficientsX3} hold.
 Then, the process $\{X^{N}_{\cdot}, N\in\N\}$ satisfies the large deviations principle with rate function $J_{X}(\cdot)$ as given by (\ref{Eq:RateFunctionXprocesses}) and speed $1/\eps_{N}^{2\zeta}$.
\end{lemma}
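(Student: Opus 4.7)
The plan is to prove Lemma~\ref{L:LDP_X} via the Bou\'e--Dupuis / Budhiraja--Dupuis weak convergence approach to the Laplace principle, as Assumptions~\ref{A:CoefficientsX2} and \ref{A:CoefficientsX3} are tailored exactly to it. Writing $b_\eps(x)=\eps b(x/\eps)$ and $\kappa_\eps(x)=\eps^{1-\zeta}\kappa(x/\eps)$, the scaled process $X^N=\eps_N X$ satisfies
\begin{equation*}
dX^N_t = b_{\eps_N}(X^N_t)\,dt + \eps_N^\zeta \kappa_{\eps_N}(X^N_t)\,dV_t,\qquad X^N_0=0,
\end{equation*}
which is a small-noise diffusion at scale $\eps_N^\zeta$. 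By the Bou\'e--Dupuis variational formula, for every bounded continuous $\Phi:C([0,T];\R)\to \R$,
\begin{equation*}
-\eps_N^{2\zeta}\log \BE\bigl[\exp(-\Phi(X^N)/\eps_N^{2\zeta})\bigr]=\inf_{u\in\mathcal{A}}\BE\Bigl[\tfrac12\int_0^T |u(s)|^2 ds +\Phi(X^{\eps_N,u})\Bigr],
\end{equation*}
where $X^{\eps_N,u}$ is the controlled process from Assumption~\ref{A:CoefficientsX3}. The Laplace principle with rate function $J_X$ (and hence the LDP, by equivalence of the two for Polish-valued random variables) follows from two sufficient conditions.

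The first condition is that the skeleton map $\Gamma$, restricted to any $L^2$-ball endowed with the weak topology, is continuous with values in $C([0,T];\R)$; this is precisely Assumption~\ref{A:CoefficientsX2}. The second condition is that whenever $\{u_n\}\subset\mathcal{A}$ satisfies $\sup_n \int_0^T|u_n(s)|^2 ds \le M$ a.s.\ and $u_n\Rightarrow u$ in the weak topology of $L^2([0,T];\R)$, one has $X^{\eps_n,u_n}\Rightarrow \Gamma(u)$ in $C([0,T];\R)$. I would verify this by passing to the limit in
\begin{equation*}
X^{\eps_n,u_n}_t = \int_0^t b_{\eps_n}(X^{\eps_n,u_n}_s)\,ds + \int_0^t \kappa_{\eps_n}(X^{\eps_n,u_n}_s)u_n(s)\,ds + \eps_n^\zeta\int_0^t \kappa_{\eps_n}(X^{\eps_n,u_n}_s)\,dV_s.
\end{equation*}
Tightness of $X^{\eps_n,u_n}$ and the uniform $L^2$-bound on $\bar\kappa(X^{\eps_n,u_n})$ are given by Assumption~\ref{A:CoefficientsX3}. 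Extracting a subsequence along which $X^{\eps_n,u_n}\to X^\infty$ in $C([0,T];\R)$, the drift term converges to $\int_0^t \bar b(X^\infty_s)ds$ by the uniform-on-compacts convergence $b_{\eps_n}\to\bar b$ of Assumption~\ref{A:CoefficientsX}; the stochastic integral vanishes in $L^2$ since its quadratic variation is $\eps_n^{2\zeta}\int_0^T\kappa_{\eps_n}^2(X^{\eps_n,u_n}_s)ds$, which is $O(\eps_n^{2\zeta})$ by the uniform integrability of $|\bar\kappa(X^{\eps_n,u_n})|^2$ in Assumption~\ref{A:CoefficientsX3} and the convergence $\kappa_{\eps_n}\to\bar\kappa$ on compacts. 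By uniqueness in Assumption~\ref{A:CoefficientsX2}, the limit then satisfies $X^\infty=\Gamma(u)$.

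The main obstacle is the mixed term $\int_0^t \kappa_{\eps_n}(X^{\eps_n,u_n}_s) u_n(s)ds$, which combines strong convergence of the integrand in $C([0,T];\R)$ with only weak convergence of $u_n$ in $L^2$. I would show that $\kappa_{\eps_n}(X^{\eps_n,u_n})\to \bar\kappa(X^\infty)$ in probability (using uniform-on-compacts convergence of $\kappa_{\eps_n}$ and a.s.\ tightness), then upgrade to strong $L^2([0,T];\R)$ convergence via the uniform integrability in Assumption~\ref{A:CoefficientsX3}; strong $L^2$ convergence of the integrand against weak $L^2$ convergence of $u_n$ then yields convergence to $\int_0^t \bar\kappa(X^\infty_s)u(s)ds$. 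Finally, compactness of the level set $\{J_X\le M\}$ follows because $\{u: \tfrac12\int_0^T|u|^2 ds\le M\}$ is weakly compact in $L^2$ and $\Gamma$ is continuous on it by Assumption~\ref{A:CoefficientsX2}; so $\{J_X\le M\}\subset \Gamma(\cdot)$ of this set, and closedness comes from lower semicontinuity of $J_X$, which is a direct consequence of the lower semicontinuity of $u\mapsto \tfrac12\int_0^T|u|^2 ds$ and the continuity of $\Gamma$.
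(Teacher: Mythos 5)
Your proof is correct and takes the same route as the paper: the paper simply rewrites $X^N=\eps_N X$ as a small-noise SDE with $\eps$-dependent coefficients and cites Theorem 2.1 of Chiarini--Fischer (which is precisely the Bou\'e--Dupuis / Budhiraja--Dupuis weak-convergence argument whose sufficient conditions Assumptions~\ref{A:CoefficientsX2} and \ref{A:CoefficientsX3} are tailored to), whereas you unpack that cited argument in detail. The substance matches; the only difference is that you reprove the reference rather than invoke it.
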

\begin{proof}
Notice that $X^{N}=\eps_{N}X$ is the unique strong solution of the SDE
\[
dX^{N}_{t}=\eps_{N}b\left(\frac{X^{N}_{t}}{\eps_{N}}\right) dt+\eps_{N}^{\zeta}\left[\eps_{N}^{1-\zeta}\kappa\left(\frac{X^{N}_{t}}{\eps_{N}}\right)\right]d V_{t},\quad X^{N}_{0}=\eps_{N}x_{0}
\]

It follows then by Theorem 2.1 in \cite{ChiariniFisher2012b}, or in the case of non-degenerate bounded diffusion from the classical results in Section 5.3. of \cite{FWBook}, that under Assumptions \ref{A:CoefficientsX}, \ref{A:CoefficientsX2} and \ref{A:CoefficientsX3} the processes $\{\eps_{N}X, N<\infty\}$ and $\{\bar{X}^{N}, N<\infty\}$ defined in (\ref{Eq:EquivalentAlternativeXprocess}) have the same large deviations principle  with rate function $J_{X}(\cdot)$ and where speed of the large deviations asymptotic is $1/\eps^{2\zeta}_{N}$.
\end{proof}

By Lemma \ref{L:LDP1} and contraction principle we get that, given  a path $t\mapsto X_{t}$, the large deviations rate function for the family $\{L^{N}_{t}=\bnu^{N}\left({\PP}\times[0,t]\right),N\in\N, t\in[0,T]\}$ is given by
\begin{equation}\label{Eq:ConditonalRateFcn}
J(\varphi;\PT)=\begin{cases}
\int_{{\Types}}g\left(\varphi({\pp}),f^{{\pp}}_{\bar{\varphi},\PT}(t)\right)
U(d{\pp}), & \varphi\in AC(\PP\times[0,T];\R),\varphi({\pp},0)=0,\varphi\geq 0,\\
&\qquad\text{and }\bar{\varphi}(s)=\int_{{\Types}}\varphi({\pp},s)U(d{\pp})\\
+\infty, & otherwise
\end{cases}
\end{equation}

The latter statement and Lemma \ref{L:LDP_X} imply  that the rate function for the pair $\left\{(L^{N}_{t},X^{N}_{t}), N\in\N, t\in[0,T]\right\}$ is given by $S(\varphi,\psi)$,
as defined in Theorem \ref{T:LDPHeterogeneous2}. Indeed,
by Lemma 1.2.18 and Theorem 4.1.11 of \cite{DemboZeitouni}, it is enough to
establish a local large deviations principle and exponential tightness. Exponential tightness is a direct consequence of the compact support computations of Section \ref{SSS:ExponentialTightness}
and the exponential tightness of the small noise diffusion process $X^{N}$. Denoting by $B(\cdot,\delta)$ the ball in the space of continuous functions in $[0,T]$ with uniform norm,
proving a local large deviations principle amounts to proving
\begin{eqnarray*}
 \lim_{\delta\downarrow 0}\limsup_{N\rightarrow\infty} \frac{1}{N}\log\BP \left\{(L^{N},X^{N})\in B((\varphi,\psi),\delta)\right\}&\leq& -S(\varphi,\psi), \quad \textrm{and}\nonumber\\
 \lim_{\delta\downarrow 0}\liminf_{N\rightarrow\infty} \frac{1}{N}\log\BP \left\{(L^{N},X^{N})\in B((\varphi,\psi),\delta)\right\}&\geq& -S(\varphi,\psi)
\end{eqnarray*}

In the case $\lim_{N\rightarrow\infty}N\eps^{2\zeta}_{N}= c\in(0,\infty)$, both statements are seen to be true due to the conditional probability relation $\BP(A\cap \Gamma)=\BP(A|\Gamma)\BP(\Gamma)$ and using the
conditonal large deviations principle discussed at Lemma \ref{L:LDP1} and the large deviations principle of Lemma \ref{L:LDP_X}. Basically, to exponential order, we  have as $N\rightarrow\infty$
\begin{equation}\label{E:genLDP}
 \BP\{L^N\approx \varphi,\, X^N \approx \psi\} \approx \exp\left[-N J(\varphi;\psi) - \frac{1}{\eps_N^{2\zeta}}J_{X}(\psi)\right].
 \end{equation}
which means that if $\lim_{N\rightarrow\infty}N\eps^{2\zeta}_{N}= c\in(0,\infty)$,  the rate function for the pair  $\left\{(L^{N}_{t},X^{N}_{t}), N\in\N, t\in[0,T]\right\}$
 is given by $S(\varphi,\psi)$, as defined in Theorem \ref{T:LDPHeterogeneous2}. Then, by varying over $\psi\in C([0,T];\R)$ and $\varphi\in C(\PP\times [0,T];\R)$ such that $\bar{\varphi}(T)=\ell$,
we get that the rate function for the loss  $\{L^{N}_{T},N<\infty\}$ at time $T$ is $I(\cdot)$, which is the statement of Theorem \ref{T:LDPHeterogeneous2}, concluding its proof.

Display (\ref{E:genLDP}) also shows that if  $\lim_{N\rightarrow\infty}N\eps^{2\zeta}_{N}=\infty$, then  the term $J(\varphi;\psi)$
is the dominant factor, while if $\lim_{N\rightarrow\infty}N\eps^{2\zeta}_{N}=0$, then the $J_{X}(\psi)$ entropy term is the dominant factor.

\section{Conclusion}\label{S:Conclusion}
In this paper we studied large deviations for an empirically motivated interacting particle system of default clustering. The components in the system interact through the empirical default
rate in the pool and through a systematic  risk which is common to all of the them. An explicit large deviations principle is derived and probabilities of tail events are then studied.
One can compute the extremals of the rate function, which characterize the most likely path to failure of the system. The numerical experiments reveal the role that contagion and
systematic risk play
in the failure of the system. In the numerical examples that we performed, we saw  that if a large default cluster occurs, the systematic risk is most likely to play a large role in the initial phase,
 but then its importance decreases, and then contagion effect become more important. Then, the analysis of the extremals in heterogeneous pools can help understanding which components in the pool are more vulnerable to the effect of contagion.


\section{Acknowledgements}
During preparation  of this work, K.S. was partially supported by the National Science Foundation (DMS 1312124) and R.S.  was partially supported by the National Science Foundation (DMS 1312071).

\end{document}